\DeclareMathOperator{\rank}{rank}
\DeclareMathOperator{\Isom}{Isom}
\DeclareMathOperator{\Hom}{Hom}
\DeclareMathOperator{\Sign}{Sign}
\newcommand{\id}{\mathrm{id}}
\newcommand{\rmO}{\mathrm{O}}
\theoremstyle{plain}
\newtheorem{theorem}{Theorem}[section]
\newtheorem{proposition}[theorem]{Proposition}
\newtheorem{lemma}[theorem]{Lemma}
\newtheorem{corollary}[theorem]{Corollary}
\theoremstyle{definition}
\newtheorem{definition}[theorem]{Definition}
\theoremstyle{remark}
\newtheorem{remark}[theorem]{Remark}
\renewcommand{\epsilon}{\varepsilon}
\renewcommand{\phi}{\varphi}
\numberwithin{equation}{section}
\begin{document}


\title[Classification of involutions on Enriques surfaces]{Classification of involutions on Enriques surfaces}
\author[H. Ito]{Hiroki Ito}
\author[H. Ohashi]{Hisanori Ohashi}
\address{
Graduate School of Mathematics, Nagoya University, 
Furo-cho, Chikusa-ku, Nagoya 464-8602 JAPAN}
\email{m04003w@math.nagoya-u.ac.jp}
\address{Department of Mathematics, 
Faculty of Science and Technology, 
Tokyo University of Science, 
2641 Yamazaki, Noda, 
Chiba 278-8510, JAPAN}
\email{ohashi@ma.noda.tus.ac.jp}
\thanks{Supported in part by 
the JSPS Grant-in-Aid for Scientific Research (S) 22224001 and for Young Scientists (B) 23740010.}
\subjclass[2000]{14J28}
\date{\today}

\begin{abstract}
We present the classification of involutions on Enriques surfaces. 
We classify those into $18$ types with the help of the lattice theory due to Nikulin. 
We also give all examples of the classification. 
\end{abstract}

\maketitle

\section{Introduction}

An Enriques surface $Y$ is a compact complex surface satisfying the following conditions: 
\begin{enumerate}
\item the geometric genus and the irregularity vanish, 
\item the bi-canonical divisor on $Y$ is linearly equivalent to $0$. 
\end{enumerate}
Every Enriques surface $Y$ is a quotient of a $K3$ surface $X$ by a fixed point free involution $\epsilon$. 
In this work, we give the classification of involutions on Enriques surfaces. 

An involution $\iota$ on $Y$ lifts to two involutions of $X$. 
One of them, which we denote by $g$, acts on $H^0(X, \Omega^2)$ trivially. 
An involution with this property is called symplectic or Nikulin involution. 
To classify $\iota$, we study the pair of involutions $(g, \epsilon)$. 
For our purpose, we use the theory of the classification of involutions of a lattice with condition on a sublattice, due to V.~V.~Nikulin~\cite{Nik4}. 

Let $S$ be a lattice and $\theta$ an involution of $S$. 
In \cite{Nik4}, the determining condition of a triple $(L, \phi, i)$ with the condition $(S, \theta)$ satisfying the following commutative diagram is given: 
\[\xymatrix{
L \ar[r]^{\phi} & L \\
S \ar[r]_{\theta} \ar@{^{(}->}[u]^{i} & S \ar@{_{(}->}[u]_{i} 
}\]
Here $L$ is a unimodular lattice, $\phi$ is an involution of $L$, and $i\colon S\to L$ is a primitive embedding. 
To investigate $(L, \phi, i)$, we use the following invariants: 
Let $L_{\pm}=\{x\in L \mid \phi(x) = \pm x\}$ and $S_{\pm}=\{x\in S \mid \theta(x)=\pm x\}$. 
From the primitive embedding $i\colon S\to L$, we get primitive embeddings $i_{\pm}\colon S_{\pm}\to L_{\pm}$. 
Hence we have the orthogonal complements $K_{\pm}=(S_{\pm})^{\perp}_{L_{\pm}}$ and images of projection 
\begin{align*}
H_-&=p_{S_-}((L\cap (L_+\oplus S_-)\otimes \mathbb{Q})/L_+\oplus S_-) \subset A_{S_-}, \\
\widetilde{H_-}&=p_{S_-}((L\cap (K_+\oplus S_-)\otimes \mathbb{Q})/K_+\oplus S_-) \subset H_-, 
\end{align*}
where $A_{S_-}$ is the discriminant group of $S_-$. 

We apply this theory as $L=H^2(X, \mathbb{Z})$, $S=\{x\in H^2(X, \mathbb{Z}) \mid g^*(x)=-x\}$ and $\phi=\epsilon^*$. 
Next theorem is our main result. 
\begin{theorem}\label{MainThm}
Involutions of Enriques surfaces are classified as follows: 

\setlongtables
\begin{longtable}{cllllc}
\caption{Invariants and the model}\label{table1} \\ \toprule
\text{No.} & $S_+(\frac{1}{2})$ & $S_-(\frac{1}{2})$ & $q_{S_-}|_{H_-}$ & $q_{S_-}|_{\widetilde{H_-}}$ & Horikawa model\\ \toprule 
\endfirsthead
\toprule
\text{No.} & $S_+(\frac{1}{2})$ & $S_-(\frac{1}{2})$ & $q_{S_-}|_{H_-}$ & $q_{S_-}|_{\widetilde{H_-}}$ & Horikawa model\\ \toprule \endhead \endfoot
\textup{[1]} & $\{0\}$ & $E_8$ & $u^4$ & & \parbox{35pt}{\includegraphics[scale=0.2]{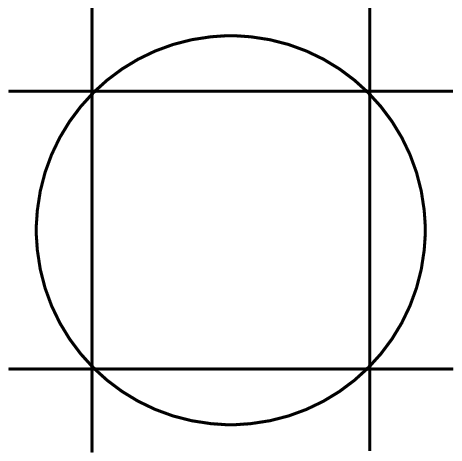}}\\ 
\textup{[2]} & $\{0\}$ & $E_8$ & $u^3\oplus w$ & & \parbox{35pt}{\includegraphics[scale=0.2]{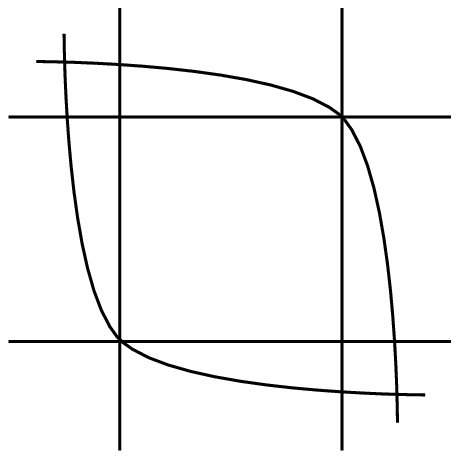}}\\ 
\textup{[3]} & $\{0\}$ & $E_8$ & $u^3\oplus z$ & & \parbox{35pt}{\includegraphics[scale=0.2]{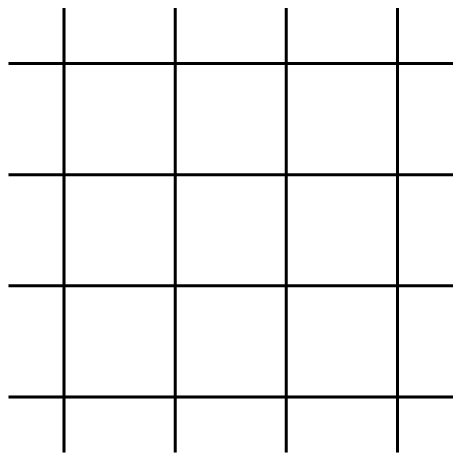}}\\ \midrule
\textup{[4]} & $A_1$ & $E_7$ & $u^3\oplus w$ & & \parbox{35pt}{\includegraphics[scale=0.2]{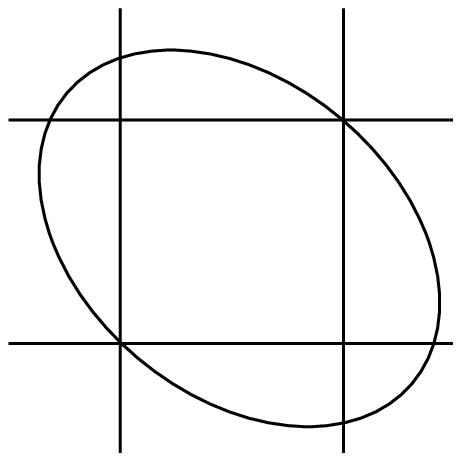}}\\ 
\textup{[5]} & $A_1$ & $E_7$ & $u^2\oplus w^2$ & & \parbox{35pt}{\includegraphics[scale=0.2]{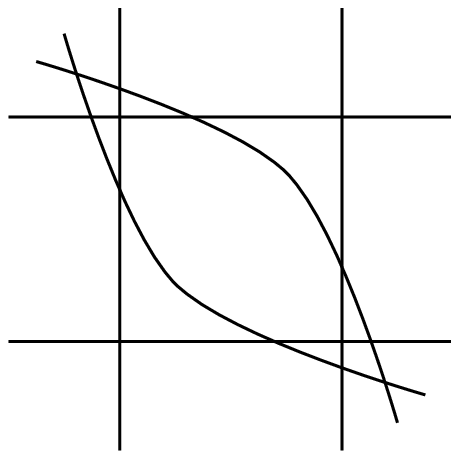}}\\ \midrule
\textup{[6]} & $A_1^2$ & $D_6$ & $u^2\oplus w^2$ & & \parbox{35pt}{\includegraphics[scale=0.2]{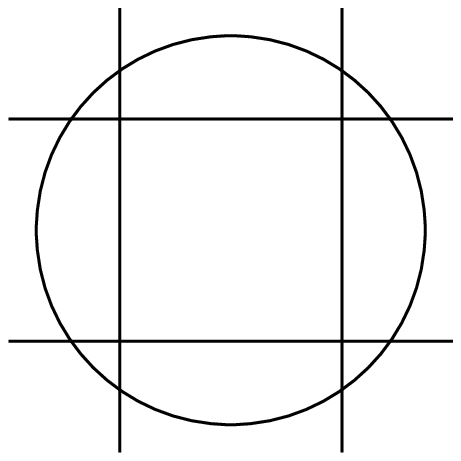}}\\ 
\textup{[7]} & $A_1^2$ & $D_6$ & $u\oplus w^3$ & & \parbox{35pt}{\includegraphics[scale=0.2]{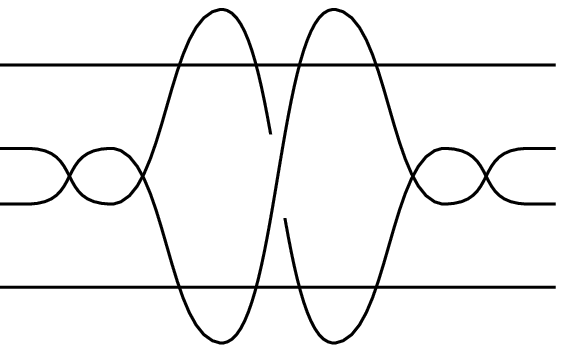}}\\ \midrule
\textup{[8]} & $A_1^3$ & $D_4\oplus A_1$ & $u\oplus w^3$ & & \parbox{35pt}{\includegraphics[scale=0.2]{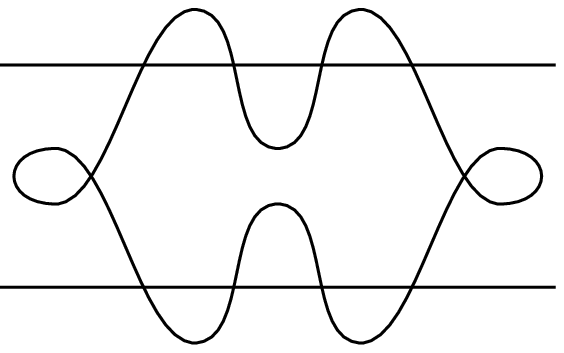}}\\ 
\textup{[9]} & $A_1^3$ & $D_4\oplus A_1$ & $w^4$ & & \parbox{35pt}{\includegraphics[scale=0.2]{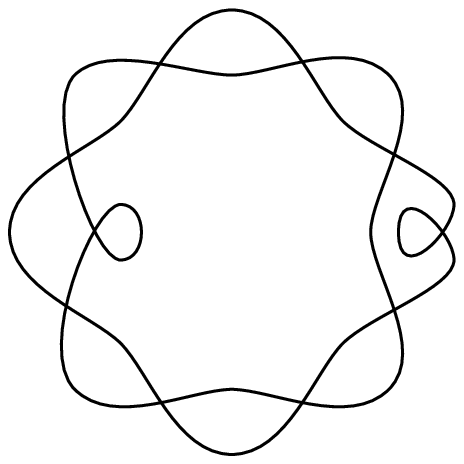}}\\ \midrule
\textup{[10]} & $D_4$ & $D_4$ & $v\oplus z^2$ & & \parbox{35pt}{\includegraphics[scale=0.2]{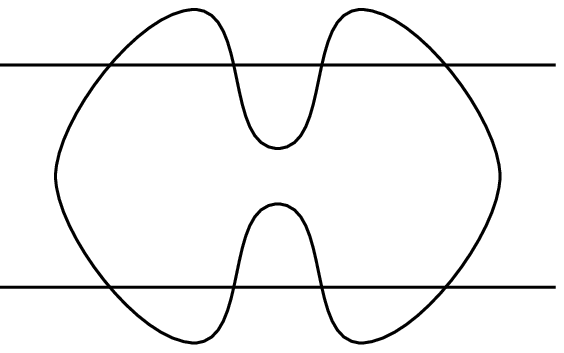}}\\ 
\textup{[11]} & $D_4$ & $D_4$ & $v\oplus z^2$ & $w\oplus z^2$ & \parbox{35pt}{\includegraphics[scale=0.2]{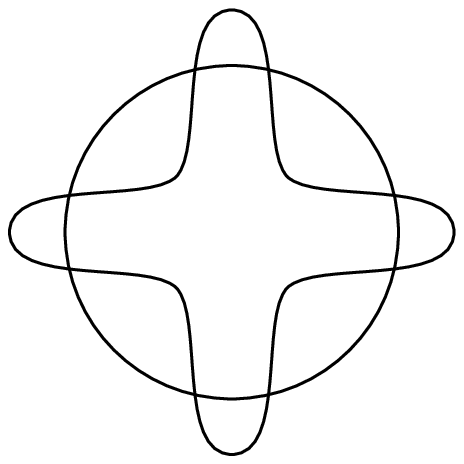}}\\ 
\textup{[12]} & $D_4$ & $D_4$ & $w\oplus z^2$ & & \parbox{35pt}{\includegraphics[scale=0.2]{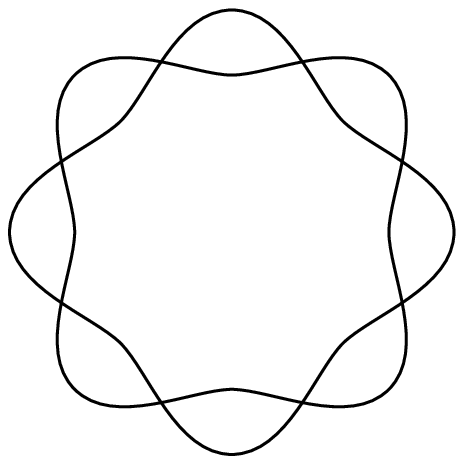}}\\ 
\textup{[13]} & $D_4$ & $D_4$ & $w\oplus z^2$ & $z^2$ & $($See Subsection~$\ref{sextic})$ \\ \midrule
\textup{[14]} & $A_1^4$ & $A_1^4$ & $w^4$ & & \parbox{35pt}{\includegraphics[scale=0.2]{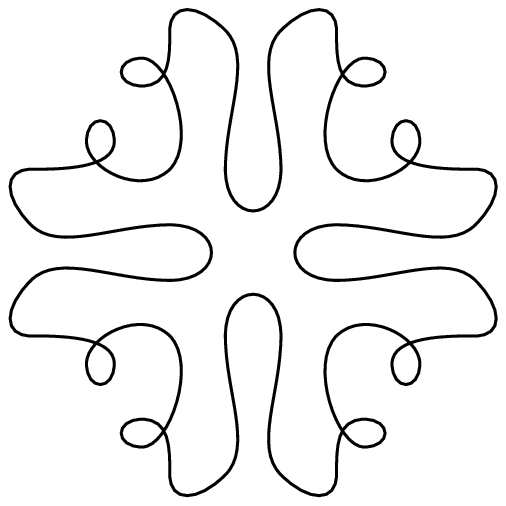}}\\ \midrule
\textup{[15]} & $D_4\oplus A_1$ & $A_1^3$ & $w^3$ & & \parbox{35pt}{\includegraphics[scale=0.2]{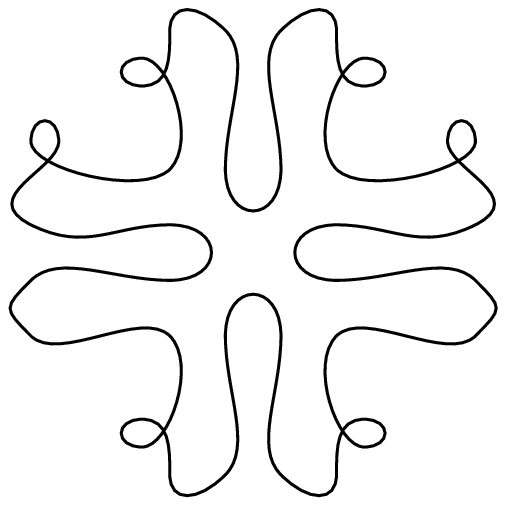}}\\ \midrule
\textup{[16]} & $D_6$ & $A_1^2$ & $w^2$ & & \parbox{35pt}{\includegraphics[scale=0.2]{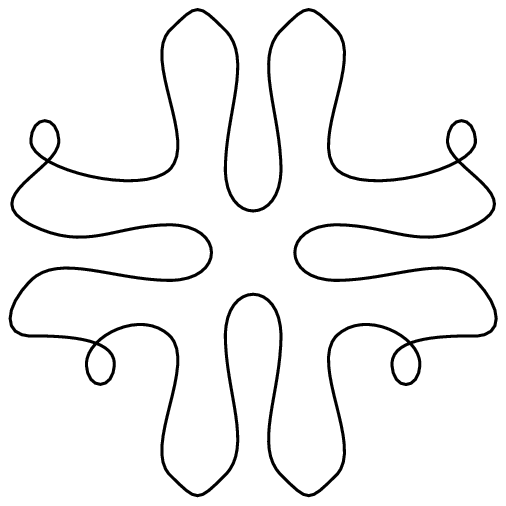}}\\ \midrule
\textup{[17]} & $E_7$ & $A_1$ & $w$ & & \parbox{35pt}{\includegraphics[scale=0.2]{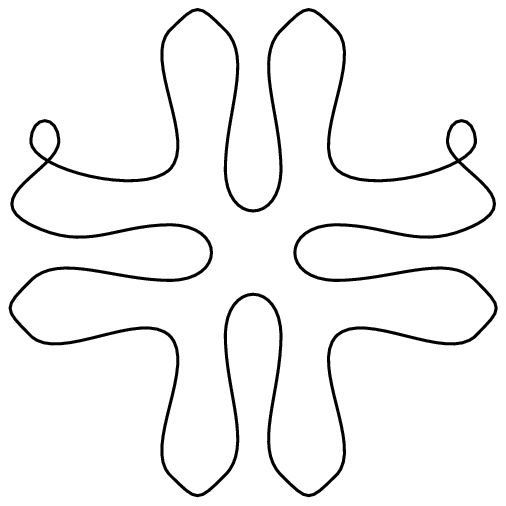}}\\ \midrule
\textup{[18]} & $E_8$ & $\{0\}$ & --- & & \parbox{35pt}{\includegraphics[scale=0.2]{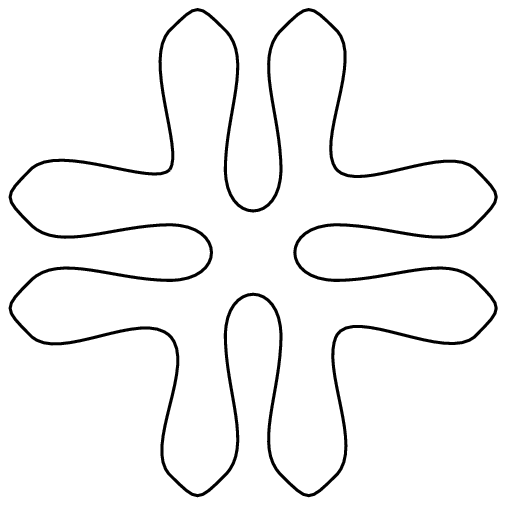}}\\ \bottomrule
\end{longtable}
In Table \ref{table1}, the blank in $q_{S_-}|_{\widetilde{H_-}}$ stands for the same as $q_{S_-}|_{H_-}$. 
Further invariants are collected in the next table. 
\setlongtables
\begin{longtable}{cllcl}
\caption{Further Invariants}\label{table2} \\ \toprule
\text{No.} & $k_-$ & $K_-$ & $(r,l,\delta )$ & Fixed curves \\ \toprule 
\endfirsthead
\toprule
\text{No.} & $k_-$ & $K_-$ & $(r,l,\delta )$ & Fixed curves \\ \toprule  \endhead \endfoot
\textup{[1]} & $u$ & $U\oplus U(2)$ & $(18,2,0)$ & $C^{(1)}+4\mathbb{P}^1$ \\ 
\textup{[2]} & $u^2$ & $U(2)\oplus U(2)$ & $(18,4,0)$ & $4\mathbb{P}^1$ \\ 
\textup{[3]} & $u^2$ & $U(2)\oplus U(2)$ & $(18,4,0)$ & $4\mathbb{P}^1$ \\ \midrule
\textup{[4]} & $u\oplus \langle\frac{-1}{4}\rangle$ & $U\oplus U(2) \oplus A_1(2)$ & $(16,4,1)$ & $C^{(1)}+3\mathbb{P}^1$ \\ 
\textup{[5]} & $u^2\oplus \langle\frac{-1}{4}\rangle$ & $U(2)\oplus U(2) \oplus A_1(2)$ & $(16,6,1)$ & $3\mathbb{P}^1$ \\ \midrule
\textup{[6]} & $u\oplus \langle\frac{-1}{4}\rangle^2$ & $U\oplus U(2) \oplus A_1(2)^2$ & $(14,6,1)$ & $C^{(1)}+2\mathbb{P}^1$ \\ 
\textup{[7]} & $u^2\oplus \langle\frac{-1}{4}\rangle^2$ & $U(2)\oplus U(2) \oplus A_1(2)^2$ & $(14,8,1)$ & $2\mathbb{P}^1$ \\ \midrule
\textup{[8]} & $u\oplus \langle\frac{-1}{4}\rangle^3$ & $U\oplus U(2) \oplus A_1(2)^3$ & $(12,8,1)$ & $C^{(1)}+\mathbb{P}^1$ \\ 
\textup{[9]} & $u^2\oplus \langle\frac{-1}{4}\rangle^3$ & $U(2)\oplus U(2) \oplus A_1(2)^3$ & $(12,10,1)$ & $\mathbb{P}^1$ \\ \midrule
\textup{[10]} & $u\oplus v\oplus v(4)$ & $U\oplus U(2) \oplus D_4(2)$ & $(10,6,0)$ & $C^{(2)}+\mathbb{P}^1$ \\ 
\textup{[11]} & $u\oplus v\oplus v(4)$ & $U\oplus U(2) \oplus D_4(2)$ & $(10,8,0)$ & $C^{(1)}_1+C_2^{(1)}$ \\ 
\textup{[12]} & $u^2\oplus v\oplus v(4)$ & $U(2)\oplus U(2) \oplus D_4(2)$ & $(10,8,0)$ & $C^{(1)}$ \\ 
\textup{[13]} & $u^2\oplus v\oplus v(4)$ & $U(2)\oplus U(2) \oplus D_4(2)$ & $(10,10,0)$ & $\emptyset$ \\ \midrule
\textup{[14]} & $u\oplus \langle\frac{1}{4}\rangle^4$ & $U\oplus U(2) \oplus A_1(2)^4$ & $(10,10,1)$ & $C^{(1)}$ \\ \midrule
\textup{[15]} & $u^2\oplus \langle\frac{1}{4}\rangle^3$ & $U\oplus U(2)\oplus D_4(2)\oplus A_1(2)$ & $(8,8,1)$ & $C^{(2)}$ \\ \midrule
\textup{[16]} & $u^3\oplus \langle\frac{1}{4}\rangle^2$ & $U\oplus U(2) \oplus D_6(2)$ & $(6,6,1)$ & $C^{(3)}$ \\ \midrule
\textup{[17]} & $u^4\oplus \langle\frac{1}{4}\rangle$ & $U\oplus U(2) \oplus E_7(2)$ & $(4,4,1)$ & $C^{(4)}$ \\ \midrule
\textup{[18]} & $u^5$ & $U\oplus U(2) \oplus E_8(2)$ & $(2,2,0)$ & $C^{(5)}$ \\ \bottomrule
\end{longtable}

In Table~$\ref{table2}$, $k_-$ is the invariant defined in Section~$4$, \eqref{k_{pm}} and $(r,l,\delta)$ is the main invariant of the non-symplectic involution $\theta=g\circ \varepsilon$, Section~$\ref{examples}$. 
``Fixed curves'' stands for the $1$-dimensional components of the fixed locus of $\iota$ on $Y$. 
We also note that $K_-$ corresponds generically to the transcendental lattice of the covering $K3$ surface $X$.
\end{theorem}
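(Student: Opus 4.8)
The plan is to run Nikulin's classification machinery from \cite{Nik4} with the specific data $L = H^2(X,\mathbb{Z}) \cong U^3 \oplus E_8^2$, $\phi = \epsilon^*$, and $S = (H^2(X,\mathbb{Z}))^-_{g^*}$ the anti-invariant lattice of the symplectic involution $g$, equipped with $\theta = \epsilon^*|_S$. The first and most essential input is that for a Nikulin involution on a K3 surface, the anti-invariant lattice $S$ is known: $S \cong E_8(2)$, and the invariant lattice is $U^3 \oplus E_8(2)$ (Morrison, van Geemen--Sarti). So $S(\tfrac12) \cong E_8$. Now the commuting involution $\epsilon^*$ preserves $S$, and $\theta = \epsilon^*|_S$ is an involution of $E_8(2)$; equivalently, after rescaling, an involution of the even unimodular lattice $E_8$. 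The eigenlattices $S_\pm(\tfrac12)$ are therefore a pair of orthogonal primitive sublattices of $E_8$ whose direct sum has finite index. First I would enumerate all conjugacy classes of involutions of $E_8$: these are classified by the pair $(S_+(\tfrac12), S_-(\tfrac12))$ up to the Weyl group, and the list is exactly $\{0\}\oplus E_8$, $A_1\oplus E_7$, $A_1^2\oplus D_6$, $A_1^3 \oplus (D_4\oplus A_1)$, $A_1^4\oplus A_1^4$, $D_4\oplus D_4$, and the ``mirror'' series obtained by swapping $+\leftrightarrow-$ — this is a standard root-system computation (involutions of $E_8$ correspond to order-$\le 2$ elements, i.e.\ to sub-root-systems arising as fixed-point sets). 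This produces the entries in columns $S_+(\tfrac12), S_-(\tfrac12)$ of Table~\ref{table1}, and one checks $S_+ \oplus S_- \subset S$ has index $2^{k}$ with the glue recorded by $k_-$.

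Next I would compute the remaining invariants $q_{S_-}|_{H_-}$ and $q_{S_-}|_{\widetilde{H_-}}$. By Nikulin's theorem, the triple $(L,\phi,i)$ with condition $(S,\theta)$ is determined (up to isomorphism, and subject to existence) by the pair of primitive embeddings $i_\pm \colon S_\pm \hookrightarrow L_\pm$ together with the subgroups $H_- \subset A_{S_-}$ and $\widetilde{H_-}\subset H_-$ and the compatibility data on discriminant forms. Here $L_\pm = H^2(X,\mathbb{Z})^\pm_{\epsilon^*}$ are fixed by the K3/Enriques setup: $L_+ \cong U(2)\oplus E_8(2)$ and $L_- \cong U \oplus U(2) \oplus E_8(2)$ (the Enriques lattice and its companion). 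So for each candidate involution of $S$ I must (i) determine the primitive embeddings of $S_+(\tfrac12)$ into $L_+(\tfrac12)$-type data and of $S_-(\tfrac12) \hookrightarrow E_8 \hookrightarrow$ the $E_8(2)$ summand, computing the orthogonal complements $K_\pm$; (ii) list the possible $H_-$, i.e.\ the possible ``overlattice glue'' between $L_+\oplus S_-$ inside $L$, which by the theory is controlled by an isotropic-subgroup/isometry condition between $q_{K_+}$ and $q_{S_-}|_{H_-}$; and (iii) similarly pin down $\widetilde{H_-}$. The discriminant forms here are all $2$-elementary, built from the standard pieces $u, v, w, z$ (the forms $u_1(2), v_1(2)$ on $(\mathbb{Z}/2)^2$ and $w_{\pm1}(2), w_{\pm1}(4)\sim z$ on cyclic groups), so each such computation is a finite check on small $2$-groups. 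Carrying this out case by case yields exactly the $18$ rows; the two cases where $\widetilde{H_-}\subsetneq H_-$ genuinely splits a row (giving $[10]$ vs $[11]$, and $[12]$ vs $[13]$) are where one must be most careful, since there the coarser invariant $H_-$ does not separate the isomorphism classes and the finer invariant $\widetilde H_-$ is needed.

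Having the abstract classification, I would then derive columns $K_-$ and $(r,l,\delta)$. The lattice $K_- = (S_-)^\perp_{L_-}$ is computed directly from the embedding $S_-\hookrightarrow L_-$ found above; since $L_-$ is the relevant even lattice and $S_-(\tfrac12)$ a root lattice, $K_-$ comes out as $U\oplus U(2)$ glued with a rescaled root lattice $R(2)$, matching the table. The invariant $(r,l,\delta)$ of the non-symplectic involution $\theta = g\circ\epsilon$ on $X$ — namely $r = \rank H^2(X)^+_{\theta^*}$, $l = \ell(A_{H^2(X)^+_{\theta^*}})$, and $\delta \in \{0,1\}$ — I would read off from Nikulin's classification of $2$-elementary lattices: the invariant lattice of $\theta^*$ is, up to the rescalings tracked above, expressible in terms of $S_+$ and $L_+$, and its rank and discriminant are immediate; here $r = 10 + (\text{something depending on }\rank S_-)$, consistent with the arithmetic relation $r = 22 - \rank S_-$ coming from $H^2(X)^+_{\theta^*} \supset (L_+)^{\epsilon}\cap\ldots$ — more precisely one tracks that $\theta = g\epsilon$ has $(H^2)^-_{\theta^*}$ containing $S_+$ and $L_-$ appropriately. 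Finally, the existence half of the classification and the geometric realization (the Horikawa-model column and the ``fixed curves'' column) come from exhibiting each type explicitly, which is the content of Section~\ref{examples}; the topological fixed-locus data then follows from the Smith--Nikulin formulas relating $r, l, \delta$ to the number and genera of fixed curves of a non-symplectic involution, together with the description of how $\iota$ on $Y$ relates to $\theta$ on $X$.

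The main obstacle I anticipate is step (ii)--(iii) of the second paragraph: correctly enumerating all the overlattice-glue subgroups $H_-$ (and then $\widetilde{H_-}$) compatible with the fixed ambient lattices $L_\pm$, and verifying that no two distinct glue-data give isomorphic triples while no valid datum is missed. This is precisely where Nikulin's existence-and-uniqueness criteria must be applied with care, and it is the reason the final count is $18$ rather than merely the $\sim 9$ conjugacy classes of involutions of $E_8$: the extra refinement from $H_-$ and $\widetilde H_-$ is subtle and case-dependent, especially in the $D_4\oplus D_4$ block.
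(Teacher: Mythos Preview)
Your outline is essentially the paper's own proof: enumerate involutions of $E_8$ to obtain the ten pairs $(S_+(\tfrac12),S_-(\tfrac12))$ (Lemma~\ref{S_{pm}}), then run Nikulin's bookkeeping to list the possible $H_-$ and $\widetilde{H_-}$, with the $D_4\oplus D_4$ block correctly flagged as the only place where $\widetilde{H_-}\subsetneq H_-$ can occur (Lemma~\ref{wt{H_-}=H_-}). Two small corrections: the $\theta^*$-invariant lattice is $T=(K_+\oplus S_-)^\wedge$, since on $K_+,S_+,S_-,K_-$ the involution $\theta^*=g^*\epsilon^*$ has sign $+,-,+,-$ respectively, so $r=\rank K_++\rank S_-=2+2\rank S_-$ (not $22-\rank S_-$); and the reason the $H_-$ enumeration is short is the a priori bound $\rank S_--1\le \rank H_-\le \rank S_-$ coming from $\rank K_-\ge l(A_{K_-})$ (Lemma~\ref{H_{pm}}(3)), which you should state explicitly rather than leave implicit in ``finite check on small $2$-groups.''
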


The Enriques surface of type~[1] was constructed by Horikawa~\cite{Hor}, and studied by Dolgachev~\cite{Dol} and Barth-Peters~\cite{BP}. 
Type~[2] was found by Kondo~\cite{Kon} and constructed generally by Mukai~\cite{Muk1}. 
Type~[3] was constructed by Lieberman (cf.~\cite{MN}). 
The Enriques surfaces of type~[1]--[3] were studied by Mukai-Namikawa~\cite{MN} and Mukai~\cite{Muk1} as numerically trivial involutions. 
Moreover type~[5] was studied by Mukai~\cite{Muk2} as numerically reflective involutions. 

In Section~2 we collect some basic definitions and notation of lattice theory. 
In Section~3 we show that Nikulin's classification theory \cite{Nik4} is useful for our purpose
and we introduce this theory in Section~4.
In Section~5 we classify the lattice structures of involutions into 18 types of the tables in Theorem~\ref{MainThm}. 
We determine the lattices $S_{\pm}, K_{-}$ and forms $q_{S_-}|_{H_-}, q_{S_-}|_{\widetilde{H_-}}, k_-$ here.
In Section~6 we determine the other invariants, give the examples and complete Theorem~\ref{MainThm}. 

The authors wish to express their gratitude to Professor Kondo for suggestions to this problem and 
many stimulating conversations. 
We are grateful to Professor Tokunaga for the construction of the curve in Example~No.~[14].

\section{Preliminaries}

Our main tool is the lattice theory. 
Here we recall some definitions and notations. 

A \emph{lattice} is a pair $(L, (\ , \ ))$, where $L$ is a free $\mathbb{Z}$-module of finite rank and $(\ , \ )$ is a non-degenerate integral symmetric bilinear form on $L$. 
We abbreviate $(L, (\ , \ ))$ to $L$. 
We will denote by $L(m)$ the lattice $(L, m(\ , \ ))$ for a given lattice $(L, (\ , \ ))$ and $m\in \mathbb{Q}$. 
$L$ is called \emph{even} if $(x, x)\in 2\mathbb{Z}$ for all $x\in L$. 
For a lattice $L$, there exists an injective homomorphism $\alpha\colon L\to L^*=\Hom(L, \mathbb{Z})$ defined by $x\mapsto (x, -)$. 
$L$ is called \emph{unimodular} if $\alpha$ is bijective. 
Let $U$ (resp.~$\langle n\rangle$) denote the rank $2$ (resp.~rank $1$) lattice given by the matrix 
\[\begin{pmatrix}0 & 1\\ 1 & 0\end{pmatrix}\quad \text{(resp.~$\begin{pmatrix}n\end{pmatrix}$)}. \]
The root lattices $A_l,\ D_m,\ E_n$ are considered to be negative definite. 

A \emph{finite quadratic form} is a triple $(A, b, q)$, where $A$ is a finite abelian group, 
$b\colon A\times A\to \mathbb{Q}/\mathbb{Z}$ is a symmetric bilinear form, 
and $q$ is a map $q\colon A \to \mathbb{Q}/2\mathbb{Z}$ satisfying the following conditions: 
\begin{enumerate}
\item $q(na)=n^2 q(a)$ for all $n\in \mathbb{Z}$, $a\in A$. 
\item $q(a+a')\equiv q(a)+q(a')+2b(a, a') \pmod{2}$ for all $a, a'\in A$. 
\end{enumerate}
A finite quadratic form is called \emph{non-degenerate} if $b$ is non-degenerate. 
An element $x\in A$ is called \emph{characteristic} if $b(x, a)\equiv q(a)\pmod{1}$ for all $a\in A$. 
We abbriviate $(A, b, q)$ (resp.~$b(a, a')$, $q(a)$) to $(A, q_A)$ or just $q_A$ (resp.~$aa'$, $a^2$). 
We denote by $w$ (resp.~$z$) the finite quadratic form on $\mathbb{Z}/2\mathbb{Z}$ whose value is $1$ (resp.~$0$). 
Note that $w$ and $z$ are degenerate. 

A \emph{discriminant $($quadratic$)$ form} for an even lattice $L$ is a non-degenerate finite quadratic form $(A_L, b_L, q_L)$, 
where $A_L:=L^*/L$, $b_L(\bar{x}, \bar{y})=(x, y) \pmod{\mathbb{Z}}$, and $q_L(\bar{x})=(x, x) \pmod{2\mathbb{Z}}$. 
We denote by $u$ (resp.~$v$, $\langle\frac{1}{n}\rangle$) the associated discriminant form of the lattice $U(2)$ (resp.~$D_4$, $\langle n\rangle$). 
We often use the following discriminant forms: 
\begin{align*}
(L, q_L)=\; & (A_1(2), \langle \tfrac{-1}{4}\rangle),\ (D_4(2), v\oplus v(4)), \\ 
& (D_6(2), u^2\oplus \langle\tfrac{1}{4}\rangle^2),\ (E_7(2), u^3\oplus \langle\tfrac{1}{4}\rangle),\ (E_8(2), u^4), 
\end{align*}
where $u^n$ denotes $n$ copies of $u$ and $v(4)$ denotes 
\[((\mathbb{Z}/4\mathbb{Z})^2, \begin{pmatrix}\frac{1}{2} & \frac{1}{4} \\ \frac{1}{4} & \frac{1}{2}\end{pmatrix}). \]

An embedding $i\colon S\to L$ of lattices is called \emph{primitive} if $L/i(S)$ is free. 
Let $S$ be a sublattice of $L$. 
We define the sublattices 
\begin{align*}
S^{\perp}&:=\{x\in L \mid (x, y)=0\quad \forall y\in S\}, \\
S^{\wedge}&:=S\otimes \mathbb{Q}\cap L 
\end{align*}
of $L$ called the \emph{orthogonal complement} to $S$ and the \emph{primitive closure} of $S$ respectively. 
Let $T$ be an orthogonal sublattice to $S$. 
We write 
\[\Gamma_{ST}:=(S\oplus T)^{\wedge}/(S\oplus T). \]
Two primitive embeddings $i\colon S\to L$ and $i'\colon S\to L'$ are called \emph{isomorphic} if there exists $f\in \Isom(L, L')$ such that $f\circ i=i'$. 

Let $M$ and $N$ be even lattices, and let $M\to N$ be an embedding. 
Then $N$ is called an \emph{overlattice} of $M$ if $N/M$ is a finite abelian group. 
Let $l(A)$ denote the minimal number of generators of an abelian group $A$. 
Note that 
\begin{equation}\label{rank>=l}
\rank M \geqq l(A_M), \quad l(A_N) \geqq l(A_M)-2l(N/M)
\end{equation}
for a lattice $M$ and an overlattice $N$ of $M$. 

A lattice $M$ is called \emph{$2$-elementary} if $A_M=M^*/M$ is a $2$-elementary group $(\mathbb{Z}/2\mathbb{Z})^a$. 
\begin{proposition}[{\cite[Theorem~3.6.2]{Nik2}}]\label{NikThm362}
The isomorphism class of an even hyperbolic $2$-elementary lattice $M$ is determined by the invariants $(r, l, \delta)$, 
where $r$ is the rank of $M$, $l$ is the minimal number of generators of $A_M$, and $\delta$ is the parity of $q_M$, that is, 
\[\delta=\begin{cases}0 & \text{if $q_M(x)=0\quad \forall x\in A_M$, } \\ 1 & \text{otherwise. }\end{cases}\]
\end{proposition}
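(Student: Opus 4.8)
The plan is to derive the statement from Nikulin's theory of discriminant forms, reducing it to a purely arithmetic classification of finite quadratic forms. I would use two inputs from that theory: first, that the genus of an even lattice is determined by its signature together with its discriminant quadratic form $(A_M,q_M)$; and second, that an indefinite even lattice of rank $\geq 3$ is alone in its genus (Eichler's theorem, via strong approximation). A hyperbolic lattice $M$ of rank $r\geq 3$ is indefinite with signature $(1,r-1)$, so by these two facts it is determined up to isomorphism by the pair $(r,q_M)$. The finitely many cases with $r\leq 2$ ($\langle 2\rangle$ in rank $1$; and $U$, $U(2)$, $\langle 2\rangle\oplus\langle-2\rangle$ in rank $2$) can be listed by hand and are already distinguished by $(r,l,\delta)$. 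Thus it suffices to prove that the discriminant form $q_M$ of a $2$-elementary even hyperbolic lattice is determined by $(r,l,\delta)$.

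Since $M$ is $2$-elementary, $A_M\cong(\mathbb{Z}/2\mathbb{Z})^{l}$, so $q_M$ is an orthogonal direct sum of the indecomposable nondegenerate $2$-elementary finite quadratic forms. These are exactly the two rank-one forms $\langle\tfrac12\rangle$ and $\langle-\tfrac12\rangle$ (values $\pm\tfrac12$; the discriminant forms of $\langle 2\rangle$ and of $A_1=\langle-2\rangle$) and the two rank-two forms $u$ and $v$ (the discriminant forms of $U(2)$ and $D_4$). I would next write down the basic isomorphisms among these blocks, namely
\[
u\oplus u\cong v\oplus v,\qquad \langle\tfrac12\rangle\oplus\langle-\tfrac12\rangle\cong u,\qquad \langle\pm\tfrac12\rangle^{\oplus 3}\cong v\oplus\langle\mp\tfrac12\rangle ,
\]
and use them to bring an arbitrary $q_M$ to a short list of normal forms. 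One then checks that on this list the isomorphism type is pinned down by three quantities: the length $l=l(A_M)$; the parity $\delta$ (equivalently, whether $q_M$ is integer-valued, so that only $u$- and $v$-blocks occur, or whether it takes a value $\pm\tfrac12$); and the signature $\Sign(q_M)\in\mathbb{Z}/8\mathbb{Z}$.

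This last observation closes the argument: by Milgram's formula $\Sign(q_M)\equiv\Sign(M)=1-(r-1)=2-r\pmod 8$, so the signature of $q_M$ is already determined by $r$. Hence $q_M$ depends only on $(r,l,\delta)$, and therefore so does the isomorphism class of $M$. As an aside (not needed for uniqueness), running Nikulin's existence criterion for even lattices with prescribed signature and discriminant form, together with the inequality $\rank M\geq l(A_M)$ from \eqref{rank>=l}, would also carve out exactly which triples $(r,l,\delta)$ are realized by a hyperbolic lattice.

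I expect the genuine work to lie in the middle step: verifying that the listed relations among $u$, $v$, $\langle\pm\tfrac12\rangle$ are \emph{complete}, so that the normal form is truly unique and $(l,\delta,\Sign\bmod 8)$ is a full invariant of a nondegenerate $2$-elementary finite quadratic form — in other words, excluding any hidden coincidence between two a priori distinct forms. A secondary, minor nuisance is the low-rank regime ($r\leq 2$, and extremal cases where $r$ is close to $l$), where the single-class-per-genus input is unavailable and one must argue directly; this is harmless, since the relevant lists are very short.
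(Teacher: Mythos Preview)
The paper does not prove this proposition at all: it is simply quoted from \cite[Theorem~3.6.2]{Nik2} and used as a black box. Your outline is essentially the route by which such a statement is proved (reduce, via uniqueness-in-genus for indefinite even lattices together with Milgram's formula, to the classification of nondegenerate $2$-elementary finite quadratic forms by $(l,\delta,\Sign\bmod 8)$), so in that sense you are supplying what the paper omits rather than reproducing it.

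There is, however, a genuine slip in your list of relations. The claimed isomorphism $\langle\tfrac12\rangle\oplus\langle-\tfrac12\rangle\cong u$ is false: the left-hand side takes the value $\tfrac12$ and hence has parity $\delta=1$, while $u$ has $\delta=0$. (Concretely, $\langle 2\rangle\oplus\langle-2\rangle$ and $U(2)$ are both hyperbolic of rank $2$ with $l=2$, but they are distinguished precisely by $\delta$.) The relation you actually need in its place is
\[
\langle\pm\tfrac12\rangle^{\oplus 2}\oplus\langle\mp\tfrac12\rangle\ \cong\ u\oplus\langle\pm\tfrac12\rangle,
\]
which, together with $u\oplus u\cong v\oplus v$ and your third relation $\langle\pm\tfrac12\rangle^{\oplus 3}\cong v\oplus\langle\mp\tfrac12\rangle$, is enough to reduce every $2$-elementary form to a normal shape determined by $(l,\delta,\Sign\bmod 8)$. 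With this correction the argument goes through; the ``genuine work'' you anticipate (completeness of these relations) is exactly Nikulin's classification of $2$-elementary discriminant forms in \cite{Nik2}.
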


Let $L$ be a lattice and $\sigma$ an involution of $L$. 
Write 
\begin{align*}
L^{\langle\sigma\rangle}&=\{x\in L \mid \sigma(x)=x\}, \\
L_{\langle\sigma\rangle}&=(L^{\langle\sigma\rangle})^{\perp}=\{x\in L \mid \sigma(x)=-x\}. 
\end{align*}
Note that if $L$ is unimodular, then $L^{\langle\sigma\rangle}$ and $L_{\langle\sigma\rangle}$ are $2$-elementary lattices. 

Next proposition is the analogue of Witt's theorem. 
\begin{proposition}[{\cite[Prop~1.9.2]{Nik4}}]\label{NikProp192}
Let $q$ be a finite quadratic form on a finite $2$-elementary group $Q$ whose kernel is zero, that is, 
\[\{x\in Q \mid x\perp Q \text{ and } q(x)=0\} = \{0\}. \]
Let $\theta\colon H_1\to H_2$ be an isomorphism of two subgroups of $Q$ that preserves the restrictions $q|H_1$ and $q|H_2$ 
and that maps the elements of the kernel and the characteristic elements of the bilinear form $q$ into the same sort of elements if they belong to $H_1$. 
Then $\theta$ extends to an automorphism of $q$. 
\end{proposition}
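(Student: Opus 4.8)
The plan is to adapt the classical proof of Witt's extension theorem to finite quadratic forms over $\mathbb{F}_2$, arguing by induction on $|H_1|$. Two elementary remarks are used throughout: since $Q$ is $2$-elementary, $q$ takes values in $\tfrac12\mathbb{Z}/2\mathbb{Z}$ and its associated bilinear form $b$ in $\tfrac12\mathbb{Z}/\mathbb{Z}$; and over $\mathbb{F}_2$ one recovers $b$ from $q$ by polarization, $2b(x,y)\equiv q(x+y)-q(x)-q(y)\pmod 2$, so the hypothesis that $\theta$ preserves $q$ on $H_1$ already forces $\theta$ to preserve $b$ on $H_1$, hence to carry $H_1\cap H_1^{\perp}$ onto $H_2\cap H_2^{\perp}$, and so on.

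\emph{Step 1: reduction to a nondegenerate bilinear form.} Let $Q_0=\{x\in Q\mid x\perp Q\}$ be the radical of $b$ --- the ``kernel of the bilinear form'' of the statement. On $Q_0$ the form $q$ is additive with values in $\{0,1\}$, so the hypothesis ``kernel zero'' forces either $Q_0=0$ or $Q_0\cong\mathbb{Z}/2\mathbb{Z}$ with $q=1$ on its generator $r$; in the latter case, choosing an $\mathbb{F}_2$-complement $Q_1$ of $Q_0$ on which $b$ is nondegenerate, one has $q\cong w\perp q|_{Q_1}$ as an orthogonal sum, every automorphism of $q$ fixes $r$ and preserves $Q_1$, and the characteristic elements of $q$ form the coset $\{c_1,c_1+r\}$ with $c_1$ the unique characteristic element of $q|_{Q_1}$. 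Because $q(c_1)\ne q(c_1+r)$, the hypotheses on $\theta$ --- preservation of $q$, sending the kernel element to a kernel element, sending characteristic elements to characteristic elements --- force $\theta(r)=r$ when $r\in H_1$ and $\theta(c_1)=c_1$ when $c_1\in H_1$; this is exactly what makes it legitimate to pass to $(Q_1,q|_{Q_1})$, so from now on we may assume $b$ nondegenerate. In that case there is a \emph{single} characteristic element $c_q$, and it is fixed by every element of $\rmO(q)$.

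\emph{Step 2: the case $\dim_{\mathbb{F}_2}H_1=1$.} Say $H_1=\langle h_1\rangle$ and $H_2=\langle h_2\rangle$ with $q(h_1)=q(h_2)$, and $h_1=c_q$ if and only if $h_2=c_q$. We must produce $\sigma\in\rmO(q)$ with $\sigma(h_1)=h_2$; since $c_q$ is fixed by all of $\rmO(q)$, this amounts to the transitivity of $\rmO(q)$ on each set $\{x\in Q\mid q(x)=\alpha,\ x\ne c_q\}$. I would prove this by a secondary induction on $\rank q$, invoking the classification of nondegenerate $2$-elementary finite quadratic forms as orthogonal sums of the standard pieces $u=q_{U(2)}$, $v=q_{D_4}$ and $\langle\pm\tfrac12\rangle$: split off from $q$ an orthogonal summand containing $h_1$ --- a copy of $\langle\pm\tfrac12\rangle$ when $b(h_1,h_1)\ne 0$, or a copy of $u$ or $v$ obtained by completing $h_1$ with a suitable partner when $b(h_1,h_1)=0$ --- do the same for $h_2$, check the assertion by hand on the building blocks of rank $\le 2$, and assemble the global automorphism from Eichler--Siegel transvections (the naive ``reflection in a vector'' being the identity map in characteristic $2$). \emph{Step 3: the inductive step.} Assume $\dim H_1\ge 2$ and the statement for subgroups of smaller order. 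Pick $0\ne h\in H_1$; by Step~2, after composing $\theta$ with an element of $\rmO(q)$ we may assume $\theta(h)=h$. If $b(h,h)\ne 0$, then $\langle h\rangle$ is an orthogonal direct summand, $Q=\langle h\rangle\perp h^{\perp}$, and --- this is where the rank-one case of splitting really does apply --- $H_i=\langle h\rangle\oplus(H_i\cap h^{\perp})$; moreover $q|_{h^{\perp}}$ is again nondegenerate, its characteristic element is $c_q-h$ if $c_q\in H_1$ (and is then fixed by $\theta$), so the inductive hypothesis applies to $\theta|_{H_1\cap h^{\perp}}$ inside $(h^{\perp},q|_{h^{\perp}})$ and one extends by the identity on $\langle h\rangle$. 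If instead $b|_{H_1}=0$, choose $x\in H_1$ with $q(x)=0$ (possible once $\dim H_1\ge 2$) and, using that $b$ is nondegenerate on $Q$, first enlarge $H_1$ and $H_2$ by hyperbolic partners of $x$ and $\theta(x)$, extending $\theta$ accordingly; this reduces to a situation where a rank-two summand $\cong u$ splits off compatibly with the $H_i$, and one proceeds as before.

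The hard part will be not any single construction but the characteristic-$2$ bookkeeping: the bilinear form can be degenerate, reflections do not exist, and the statement is plainly false without the hypotheses on kernel and characteristic elements. Concretely, the two delicate points are (i) verifying that ``kernel zero'', the uniqueness of the characteristic element, and the matching of ``sorts'' are all inherited at every reduction above, so that the inductive hypotheses genuinely apply, and (ii) the transitivity statement of Step~2 on the pieces $u$, $v$, $\langle\pm\tfrac12\rangle$, which is where Eichler--Siegel transvections must substitute for the missing reflections.
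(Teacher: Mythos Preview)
The paper does not prove this proposition at all: it is simply quoted from \cite[Prop.~1.9.2]{Nik4} and used as a black box, so there is no ``paper's own proof'' to compare against. Your inductive strategy in the style of Witt's extension theorem is the natural one and is essentially how such results are proved, but your sketch has two defects worth flagging.

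First, a minor slip in Step~1: it is \emph{not} true that every automorphism of $q$ preserves a chosen complement $Q_1$ of the radical $Q_0=\langle r\rangle$. For instance, on $w\oplus u=\langle r\rangle\oplus\langle e,f\rangle$ the map $r\mapsto r,\ e\mapsto e,\ f\mapsto e+f+r$ is in $\rmO(q)$ but does not preserve $\langle e,f\rangle$. The reduction is still valid, but the correct mechanism is passage to the quotient $Q/Q_0$: an extension $\bar\sigma\in\rmO(q|_{Q_1})$ of $\bar\theta$ lifts (by the identity on $Q_0$) to $\sigma\in\rmO(q)$, and then $\sigma(h)-\theta(h)\in Q_0$ together with $q(\sigma(h))=q(\theta(h))$ and $q(r)=1$ forces $\sigma(h)=\theta(h)$.

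Second, and more substantively, your case analysis in Step~3 is incomplete. You treat ``$b(h,h)\neq 0$ for some $h\in H_1$'' and ``$b|_{H_1}=0$'', but over $\mathbb{F}_2$ these are not complementary: one can have $b(h,h)=0$ for every $h\in H_1$ (equivalently $q(H_1)\subset\{0,1\}$) while $b|_{H_1}\not\equiv 0$ --- the form $u$ itself is already an example. In that missing case your ``enlarge by a hyperbolic partner'' manoeuvre is unavailable as written, since the partner you need is already inside $H_1$. The fix is to pick $h,h'\in H_1$ with $b(h,h')=\tfrac12$, so that $\langle h,h'\rangle\cong u$ or $v$ is nondegenerate; then prove a rank-two analogue of Step~2 (transitivity of $\rmO(q)$ on embedded copies of $u$, resp.\ $v$, away from the characteristic element), use it to arrange $\theta|_{\langle h,h'\rangle}=\id$, split off this rank-two summand orthogonally from $Q$ and from both $H_i$, and apply the inductive hypothesis to the complement. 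With this case added and the inheritance checks you yourself flag in your final paragraph carried out, the argument goes through.
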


\section{Involutions on Enriques surfaces}

Let $Y$ be an Enriques surface and $X$ its covering $K3$ surface with the covering involution $\epsilon$. 
Consider an involution $\iota$ of $Y$. Then $\iota$ lifts to two involutions of $X$. 
One of them acts on $H^0(X, \Omega^2)$ trivially, which we denote by $g$. Then another involution is $g\circ\epsilon=\epsilon\circ g$. 

The second cohomology group $H^2(X, \mathbb{Z})$ is an even unimodular lattice with the signature $(3, 19)$. 
Let $S=\{x\in H^2(X, \mathbb{Z}) \mid g^*(x)=-x\}$, where $g^*$ is the involution of $H^2(X, \mathbb{Z})$ induced by $g$. 
It is known that $S$ is isomorphic to $E_8(2)$ and this does not depend on $g$ (\cite{Mor}, \cite{Nik1}). 

\begin{lemma}\label{2-elm}
Let $L$ be a unimodular lattice and $S$ a $2$-elementary lattice. 
The followings are equivalent. 
\begin{enumerate}
\item There exists an involution $\alpha$ of $L$ such that $L_{\langle\alpha\rangle}\cong S$. 
\item There exists a primitive embedding $S\to L$. 
\end{enumerate}
\end{lemma}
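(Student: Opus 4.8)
The plan is to prove the two implications separately, with the forward direction being essentially a computation with invariant and anti-invariant lattices and the converse being the substantive one.

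\medskip

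\emph{Proof of $(1)\Rightarrow(2)$.} Suppose $\alpha$ is an involution of the unimodular lattice $L$ with $L_{\langle\alpha\rangle}\cong S$. By definition $L_{\langle\alpha\rangle}=(L^{\langle\alpha\rangle})^{\perp}$, so the inclusion $L_{\langle\alpha\rangle}\hookrightarrow L$ is already an embedding of $S$; it remains to check primitivity. This is immediate: if $x\in L$ and $nx\in L_{\langle\alpha\rangle}$ for some positive integer $n$, then $\alpha(nx)=-nx$, i.e.\ $n(\alpha(x)+x)=0$ in the torsion-free module $L$, hence $\alpha(x)=-x$ and $x\in L_{\langle\alpha\rangle}$. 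So $i\colon S\cong L_{\langle\alpha\rangle}\hookrightarrow L$ is a primitive embedding. (Here I do not even need the $2$-elementary hypothesis; it is only used for the converse, via Nikulin's theory.)

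\medskip

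\emph{Proof of $(2)\Rightarrow(1)$.} This is the main step. Given a primitive embedding $i\colon S\to L$, I want to produce an involution $\alpha$ of $L$ acting as $-1$ on $i(S)$ and as $+1$ on $i(S)^{\perp}=:K$; the natural candidate is the gluing of $-\id_S$ and $\id_K$, which is an involution of $S\oplus K$, and one must check it extends to all of $L$. By the standard theory (Nikulin~\cite{Nik2}), $L$ is an overlattice of $S\oplus K$ corresponding to an isotropic subgroup $\Gamma_{SK}\subset A_S\oplus A_K$ which is the graph of an anti-isometry $\gamma\colon A_S\xrightarrow{\sim} A_K$ (both discriminant groups being identified because $L$ is unimodular, in particular $A_K\cong A_S$ and $q_K=-q_S$). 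The map $-\id_S\oplus\id_K$ induces $-\id$ on $A_S$ and $\id$ on $A_K$; it preserves $\Gamma_{SK}$ precisely when $\gamma\circ(-\id)=\id\circ\gamma$ on $A_S$, i.e.\ when $2\gamma=0$, i.e.\ when $A_S$ is $2$-elementary — which is exactly the hypothesis that $S$ is $2$-elementary. Hence $-\id_S\oplus\id_K$ preserves the overlattice $L$ and defines an involution $\alpha$ of $L$ with $L_{\langle\alpha\rangle}=S\otimes\mathbb{Q}\cap L=S^{\wedge}$; and $S^{\wedge}=S$ since $i$ is primitive. This gives $L_{\langle\alpha\rangle}\cong S$ as desired.

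\medskip

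The only point needing care is the identification $A_K\cong A_S$ together with $q_K\cong -q_S$, and that the overlattice $L/(S\oplus K)$ is indeed the graph of an \emph{iso}morphism $A_S\to A_K$ rather than a smaller isotropic subgroup — this follows from unimodularity of $L$ by the projection-formula argument of Nikulin: the two projections $\Gamma_{SK}\to A_S$ and $\Gamma_{SK}\to A_K$ are both isomorphisms because $[L:S\oplus K]^2=|A_S|\cdot|A_K|$ and $L$ unimodular forces $|A_S|=|A_K|=[L:S\oplus K]$. So the argument above is the whole proof; the main (and really the only) obstacle is just to isolate the condition ``$2$-elementary $\Leftrightarrow$ the sign-change descends to the glue group,'' which is the conceptual heart of the lemma. (One could alternatively invoke Proposition~\ref{NikProp192} to extend the identity/sign map on discriminant forms, but the direct graph computation is cleaner here.)
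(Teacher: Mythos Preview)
Your proof is correct and follows essentially the same approach as the paper. The paper's proof of $(2)\Rightarrow(1)$ simply invokes \cite[Corollary~1.5.2]{Nik2} as a black box to obtain the involution $\alpha$ with $\alpha|_K=\id$ and $\alpha|_S=-\id$; what you have done is unpack that corollary in this special case via the explicit glue-group computation, which is exactly the mechanism behind Nikulin's result.
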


\begin{proof}
Assume (1). 
Since $S=(L^{\langle\alpha\rangle})^{\perp}$, it follows that the sublattice $S$ is primitive in $L$. 

Assume (2). 
Let $K=S^{\perp}$. 
Since $S$ and $K$ are $2$-elementary lattices, 
there exists an involution $\alpha\in\rmO(L)$ such that $\alpha|_K=1$ and $\alpha|_S=-1$, by \cite[Corollary~1.5.2]{Nik2}. 
Since $S$ is primitive in $L$, it follows that $S=L_{\langle\alpha\rangle}$. 
\end{proof}

To classify $\iota$, it suffices to classify the pair of involutions $(g, \epsilon)$. 
From Torelli type theorem (\cite{PS}), this is equivalent to classifying the pair $(g^*, \epsilon^*)$. 
By Lemma~\ref{2-elm}, this is equivalent to classifying a primitive embedding of $S=E_8(2)$ into $H^2(X, \mathbb{Z})$ and an action of $\epsilon^*$ on $S$. 

\section{Involutions of a lattice with condition on a sublattice}

In this section, we introduce the theory of involutions of a lattice with condition on a sublattice. 

\begin{definition}[{\cite[Definition~1.1.1]{Nik4}}]
By a \emph{condition on an involution} we understand a pair $(S, \theta)$, 
where $S$ is a non-degenerate lattice and $\theta$ is an involution of $S$. 
\end{definition}

\begin{remark}
In \cite{Nik4}, a condition on an involution is defined as a triple $(S, \theta, G)$, 
where $S$ is a (possibly degenerate) lattice, $\theta$ is an involution of $S$, 
and $G\subset \rmO(S, \theta)$ is a distinguished subgroup of the normalizer of $\theta$ in $\rmO(S)$. 
In this paper, we assume that $G=\{\id_S\}$. 
\end{remark}

\begin{definition}[{\cite[Definition~1.1.2]{Nik4}}]
By a \emph{unimodular involution with the condition $(S, \theta)$} we understand a triple $(L, \phi, i)$, 
where $L$ is a unimodular lattice, $\phi$ is an involution of $L$, 
and $i\colon S\to L$ is a primitive embedding satisfying $\phi\circ i = i\circ \theta$. 

Two unimodular involutions $(L, \phi, i)$ and $(L', \phi', i')$ with the condition $(S, \theta)$ are called \emph{isomorphic} 
if there exists an isomorphism $f\colon L\to L'$ with $\phi'\circ f=f\circ \phi$ and $f\circ i=i'$. 
\end{definition}

Let $S_{\pm}=\{x\in S \mid \theta(x)=\pm x\}$. 
We write $p_{S_{\pm}}\colon S/(S_+\oplus S_-) \to A_{S_{\pm}}$ for the projections and $\Gamma_{\pm}=p_{S_{\pm}}(S/(S_+\oplus S_-))\subset A_{S_{\pm}}$ for the images of $S/(S_+\oplus S_-)$. 
Note that $S/(S_+\oplus S_-)$ is the graph of $\gamma:=p_{S_-}\circ p_{S_+}^{-1}\colon \Gamma_+\to \Gamma_-$, so we write $\Gamma_{\gamma}=S/(S_+\oplus S_-)$. 

\begin{theorem}[{\cite[Theorem~1.3.1]{Nik4}}]\label{NikThm131}
Any unimodular involution with the condition $(S, \theta)$ is determined by the list 
\begin{equation}\label{list}
(H_{\pm}, q_r, q, \gamma_r, K_{\pm}, \gamma_{K_{\pm}}), 
\end{equation}
where $H_{\pm}$ are subgroups with $\Gamma_{\pm} \subset H_{\pm} \subset (S_{\pm}^*\cap \tfrac{1}{2}S_{\pm})/S_{\pm}$, 
$q_r$ is a finite quadratic form on the $2$-elementary group $(H_+\oplus H_-)/\Gamma_{\gamma}$ with $q_r|_{H_{\pm}}=\pm q_{S_{\pm}}|_{H_{\pm}}$, 
$q$ is the isomorphism class of a non-degenerate $2$-elementary finite quadratic form, 
$\gamma_r\colon q_r \to q$ is an embedding of forms, $K_{\pm}$ are even lattices, 
and $\gamma_{K_{\pm}}\colon q_{K_{\pm}} \to k_{\pm}$ are isomorphisms of forms. 
Here $k_{\pm}$ are defined by 
\begin{equation}\label{k_{pm}}
k_{\pm}=((-q_{S_{\pm}}\oplus \pm q)|\Gamma_{\gamma_{r}|H_{\pm}}^{\perp})/\Gamma_{\gamma_{r}|H_{\pm}}, 
\end{equation}
where $\Gamma_{\gamma_{r}|H_{\pm}}$ are the graphs of the embeddings $H_{\pm} \to q$ induced by $\gamma_r$. 

Two lists $(\ref{list})$ and $(H_{\pm}', q_r', q', \gamma_r', K_{\pm}', \gamma_{K_{\pm}'}')$ 
determine isomorphic unimodular involutions with the condition $(S, \theta)$ if and only if $H_{\pm}=H_{\pm}'$, $q_r=q_r'$, $q=q'$, 
and there exist isomorphisms $\xi \in \rmO(q)$ and $\psi_{\pm} \in \Isom(K_{\pm}, K_{\pm}')$ 
such that $\xi\circ\gamma_r=\gamma_r'$ and $(\id, \xi)|_{k_{\pm}}\circ\gamma_{K_{\pm}}=\gamma_{K_{\pm}'}'\circ\overline{\psi_{\pm}}$, 
where $(\id, \xi)|_{k_{\pm}}$ are isomorphisms between $k_{\pm}$ and $k_{\pm}'$ induced by $\id\in \rmO(q_{S_{\pm}})$ and $\xi$, 
and $\overline{\psi_{\pm}}$ are isomorphisms between $q_{K_{\pm}}$ and $q_{K_{\pm}'}$ induced by $\psi_{\pm}$. 
\end{theorem}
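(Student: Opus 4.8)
Since this is Nikulin's theorem, the plan is only to lay out the structure of the argument; the complete proof is in \cite{Nik4}. The idea is an iterated gluing analysis based on a double eigenspace decomposition. Given a triple $(L,\phi,i)$, I would first decompose $L$ into the $\phi$-eigenlattices $L_+=L^{\langle\phi\rangle}$ and $L_-=L_{\langle\phi\rangle}$, which are $2$-elementary because $L$ is unimodular; then $L$ is an overlattice of $L_+\oplus L_-$ whose glue is the graph of a full anti-isometry $A_{L_+}\cong A_{L_-}$ carrying $q_{L_+}$ to $-q_{L_-}$, since $L_\pm$ are mutually orthogonal primitive sublattices of the unimodular lattice $L$. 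Because $\phi\circ i=i\circ\theta$, the embedding $i$ restricts to primitive embeddings $S_\pm\to L_\pm$; putting $K_\pm=(S_\pm)^{\perp}_{L_\pm}$, one finds that $L$ is an overlattice of $M:=S_+\oplus S_-\oplus K_+\oplus K_-$, so the whole triple is packaged into one isotropic subgroup $L/M\subset A_M=A_{S_+}\oplus A_{S_-}\oplus A_{K_+}\oplus A_{K_-}$, whose intersection with the $A_{S_+}\oplus A_{S_-}$ summand is forced to be the prescribed glue $\Gamma_\gamma=S/(S_+\oplus S_-)$.

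Next I would read the list \eqref{list} off of $L/M$. Since the eigenlattices are $2$-elementary and $S_\pm$ is primitive in $L_\pm$, every coset of $S_\pm$ occurring in the glue lies in $(S_\pm^{*}\cap\tfrac{1}{2}S_\pm)/S_\pm$; collecting these as the projections $H_\pm$ to $A_{S_\pm}$ yields the constraint $\Gamma_\pm\subset H_\pm\subset(S_\pm^{*}\cap\tfrac{1}{2}S_\pm)/S_\pm$. The quadratic form induced on $(H_+\oplus H_-)/\Gamma_\gamma$, restricting to $\pm q_{S_\pm}$ on $H_\pm$, is $q_r$; the ambient non-degenerate $2$-elementary form through which the $K_\pm$-parts of the glue are transmitted is $q$; the induced embedding $q_r\to q$ is $\gamma_r$; and a direct computation identifies the form $k_\pm$ of \eqref{k_{pm}} canonically with $q_{K_\pm}$, so that $K_\pm$ and the identifications $\gamma_{K_\pm}$ are recovered as well. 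For the converse I would reverse this: from $q$ and $\gamma_r$ assemble the $2$-elementary form gluing $\pm q_{S_\pm}$ into $q$, form $k_\pm$ by the recipe \eqref{k_{pm}} and identify it with $q_{K_\pm}$ via $\gamma_{K_\pm}$, recover $L_\pm$ as the overlattice of $S_\pm\oplus K_\pm$ determined by the corresponding isotropic subgroup, and finally glue $L_+$ to $L_-$ along the remaining full anti-isometry---available because $q_{L_+}\cong-q_{L_-}$ by construction---to produce a unimodular $L$ carrying the tautological involution $\phi$ and a primitive embedding $i$; the identity $\phi\circ i=i\circ\theta$ holds automatically because the $S_+$--$S_-$ glue was taken to be $\Gamma_\gamma$. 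Throughout, this uses Nikulin's dictionary between overlattices and isotropic subgroups of discriminant forms \cite{Nik2}.

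The hard part will be the uniqueness statement. The ``only if'' direction is a routine transport of structure: an isomorphism $f$ of triples preserves the $\phi$-eigenlattices and the sublattices $S_\pm$, hence induces $\psi_\pm\in\Isom(K_\pm,K_\pm')$ and an automorphism $\xi$ of the ambient form $q$, and pushing the glue through $f$ yields $\xi\circ\gamma_r=\gamma_r'$ together with $(\id,\xi)|_{k_\pm}\circ\gamma_{K_\pm}=\gamma_{K_\pm'}'\circ\overline{\psi_\pm}$. The substance is the ``if'' direction: from $\xi\in\rmO(q)$ and $\psi_\pm\in\Isom(K_\pm,K_\pm')$ satisfying these compatibilities one must manufacture an isometry $L\to L'$ commuting with $\phi$ and fixing $i(S)$. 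I would first use $\xi$, $\psi_\pm$ and $\gamma_{K_\pm}$ to match the gluings of $L_\pm$ with those of $L_\pm'$, producing isometries $L_\pm\to L_\pm'$ that restrict to the identity on $S_\pm$ and to $\psi_\pm$ on $K_\pm$, and then extend this pair to an isometry of the unimodular overlattices $L$ and $L'$ compatibly with the second gluing. This final extension is not automatic: it is precisely where the Witt-type extension theorem for $2$-elementary finite quadratic forms, Proposition~\ref{NikProp192}, enters, the $2$-elementary hypothesis guaranteeing that the automorphism of $q$ needed to reconcile the two gluings exists, and the standing assumption $G=\{\id_S\}$ meaning that no further symmetry on the $S$-side has to be respected. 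Organizing this simultaneous extension along the two nested gluings of a single lattice is the main obstacle; everything else is bookkeeping with discriminant forms.
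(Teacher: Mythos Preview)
Your outline of how the list is extracted from $(L,\phi,i)$ and your ``only if'' direction are fine and agree with the paper, which in fact only supplies the equivalence-of-lists argument and refers to \cite{Nik4} for the rest. The gap is in your ``if'' direction. Having built $f_\pm\colon L_\pm\to L_\pm'$ with $f_\pm|_{S_\pm}=\id$ and $f_\pm|_{K_\pm}=\psi_\pm$, you must show that $(f_+,f_-)$ extends across the gluing $\gamma_{L_+L_-}$, i.e.\ that $\overline{f_-}\circ\gamma_{L_+L_-}=\gamma_{L_+'L_-'}\circ\overline{f_+}$. But once $f_\pm$ are fixed, the induced maps $\overline{f_\pm}$ on discriminant groups are completely determined; Proposition~\ref{NikProp192} only produces automorphisms of the finite form $q$, and to use such an automorphism to correct $f_\pm$ you would need to lift it back to $\rmO(L_\pm)$---precisely the surjectivity $\rmO(L_\pm)\to\rmO(q_{L_\pm})$ you have not arranged. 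So Witt extension is the wrong tool at this step.

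The paper's device is different. It chooses auxiliary lattices $T_1,T_2$ with $q_{T_1}=q$, $q_{T_2}=-q$, each unique in its genus and with $\rmO(T_i)\to\rmO(q_{T_i})$ surjective. Then $\xi$ lifts to an honest $f_1\in\rmO(T_1)$; one realizes $L_-,L_-'$ (resp.\ $L_+,L_+'$) as orthogonal complements of $T_1$ (resp.\ $T_2$) inside a single unimodular lattice $L_1$ (resp.\ $L_2$), and builds automorphisms $\beta_1\in\rmO(L_1)$, $\beta_2\in\rmO(L_2)$ from $(f_1,\id_{S_-},\psi_-)$ and its analogue. A third unimodular lattice $L_3\supset T_1\oplus T_2$ yields a companion $\xi_-\in\rmO(-q)$ with $\xi_-\circ\gamma_{T_1T_2}=\gamma_{T_1T_2}\circ\xi$, and the resulting commutative square on $A_{L_\pm}$ forces $(\beta_2|_{L_+},\beta_1|_{L_-})$ to glue to the desired isomorphism $L\to L'$. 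In the paper's application $L_\pm$ are indefinite $2$-elementary, so one may in fact take $T_1=L_+$, $T_2=L_-$ (Remark~\ref{beta}); but the general statement needs this auxiliary-lattice trick, and that is the missing idea in your sketch.
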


\begin{proof}
We prove only the assertion about the equivalence of the lists (\ref{list}), which is omitted in \cite{Nik4}. 
Let $(L, \phi, i)$ and $(L', \phi', i')$ be the unimodular involutions with the condition $(S, \theta)$ 
determined by the lists $(\ref{list})$ and $(H_{\pm}', q_r', q', \gamma_r', K_{\pm}', \gamma_{K_{\pm}'}')$ respectively. 

Assume that two lists determine isomorphic unimodular involutions. 
There exists $f\in \Isom(L, L')$ such that $f\circ i=i'$ and $\phi'\circ f=f\circ\phi$. 
It follows from $\phi'\circ f=f\circ\phi$ that $f$ induces $f_{\pm}:=f|_{L_{\pm}}\in \Isom(L_{\pm}, L_{\pm}')$ with $f_{\pm}\circ i_{\pm}=i'_{\pm}$, 
where $i_{\pm}\colon S_{\pm}\to L_{\pm}$ and $i_{\pm}'\colon S_{\pm}\to L_{\pm}'$ are primitive embeddings induced by $i$ and $i'$ respectively. 
Since $f$ induces $f|_{L_+\oplus S_-}=(f_+, \id)\in \Isom(L_+, L_+')\times \rmO(S_-)$, so does an isomorphism between $(L_+\oplus S_-)^{\wedge}$ and $(L_+'\oplus S_-)^{\wedge}$. 
Hence we have 
\[H_-=p_{S_-}(\Gamma_{L_+S_-})=p_{S_-}(\Gamma_{L_+'S_-})=H_-'\]
and $\overline{f_+}\circ \gamma_r|_{H_-}=\gamma_r'|_{H_-'}$, 
where $\overline{f_+}$ is an isomorphism between $q$ and $q'$ induced by $f_+$. 

Similarly $f$ induces an isomorphism between $(L_-\oplus S_+)^{\wedge}$ and $(L_-'\oplus S_+)^{\wedge}$. 
Hence we see that $H_+=H_+'$ and $\overline{f_-}\circ (\gamma_{L_+L_-}\circ \gamma_r|_{H_+})=\gamma_{L_+'L_-'}\circ \gamma_r'|_{H_+'}$. 
From $\overline{f_-}\circ\gamma_{L_+L_-}=\gamma_{L_+'L_-'}\circ \overline{f_+}$, we have $\overline{f_+}\circ \gamma_r=\gamma_r'$. 
Since $(L_+\oplus S_-)^{\wedge}=(K_-)_L^{\perp}$ and $(L_+'\oplus S_-)^{\wedge}=(K_-')_{L'}^{\perp}$, there exists $\psi_-$ with the condition, by \cite[Corollary~1.5.2]{Nik2}. 
Similarly, we have $\psi_+$ with the condition. 
It is clear that $q=q'$ and $q_r=q_r'$. 

We turn to the contrary. 
Assume that $H_{\pm}=H_{\pm}'$, $q_r=q_r'$, $q=q'$ and there exist $\xi=\xi_+\in \rmO(q)$ and $\psi_{\pm} \in \Isom(K_{\pm}, K_{\pm}')$ with the conditions. 
Note that invariants $(H_{\pm}, \gamma_r|_{H_{\pm}}, K_{\pm}, \gamma_{K_{\pm}})$ determine primitive embeddings $i_{\pm}\colon S_{\pm}\to L_{\pm}$ 
with orthogonal complements $K_{\pm}$ by \cite[Proposition~1.15.1]{Nik2}, 
where $L_{\pm}$ are the lattices with discriminant forms $\pm q$ respectively. 
Let $T_1$ (resp.~$T_2$) be any lattice which is the unique in its genus 
and furthermore $\rmO(T_1)\to \rmO(q_{T_1})$ (resp.~$\rmO(T_2)\to \rmO(q_{T_2})$) is surjective and $q_{T_1}=q$ (resp.~$q_{T_2}=-q$). 
From $q=q'$ and $K_-\cong K_-'$ (resp.~$K_+\cong K_+'$), 
we see that $L_-$ and $L_-'$ (resp.~$L_+$ and $L_+'$) are obtained as orthogonal complements of a primitive embedding $T_1\to L_1$ (resp.~$T_2\to L_2$), 
where $L_1$ (resp.~$L_2$) is a unimodular lattice with 
\begin{gather*}
\Sign L_1=\Sign L_-+\Sign T_1=\Sign L_-'+\Sign T_1 \\
\text{(resp.~$\Sign L_2=\Sign L_++\Sign T_2=\Sign L_+'+\Sign T_2$). }
\end{gather*}
Moreover $T_1$ is obtained as an orthogonal complement of a primitive embedding $T_2\to L_3$, 
where $L_3$ is a unimodular lattice with 
\[\Sign L_3=\Sign T_1+\Sign T_2. \]
Hence there exists $\xi_-\in \rmO(-q)$ such that $\xi_-\circ \gamma_{T_1T_2}=\gamma_{T_1T_2}\circ \xi_+$. 

Since $\rmO(T_1)\to \rmO(q_{T_1})=\rmO(q)$ is surjective, there exists $f_1\in \rmO(T_1)$ such that $\overline{f_1}=\xi_+$. 
By $\xi_+\circ \gamma_r|_{H_-}=\gamma_r'|_{H_-'}$ and $H_-=H_-'$, it follows that $(f_1, \id)\in \rmO(T_1)\times \rmO(S_-)$ extends to 
an isomorphism 
\[\alpha_1\colon (T_1\oplus S_-)^{\wedge}\to (T_1\oplus S_-)^{\wedge}. \]
Note that the former $(T_1\oplus S_-)^{\wedge}$ is equal to $(K_-)^{\perp}_{L_1}$, and the latter is equal to $(K_-')^{\perp}_{L_1}$. 
From the condition of $\psi_-$, it follows that $(\alpha_1, \psi_-)$ extends to an automorphism 
\[\beta_1\colon L_1\to L_1. \]
Similarly there exists an automorphism $\beta_2\colon L_2\to L_2$ such that $\overline{\beta_2|_{T_2}}\in \rmO(-q)$, $\beta_2|_{S_+}=\id$ and $\beta_2|_{K_+}=\psi_+$. 
Therefore we have the following commutative diagram: 
\[\xymatrix{
A_{L_-} \ar[r] \ar[d]_{\overline{\beta_1|_{L_-}}} & A_{T_1} \ar[r] \ar[d]^{\xi_+} & A_{T_2} \ar[r] \ar[d]_{\xi_-} & A_{L_+} \ar[d]^{\overline{\beta_2|_{L_+}}} \\
A_{L_-'} \ar[r] & A_{T_1} \ar[r] & A_{T_2} \ar[r] & A_{L_+'} 
}\]
Hence $(\beta_2|_{L_+}, \beta_1|_{L_-})$ extends to an isomorphism $\beta\colon L\to L'$ with $\beta\circ i=i'$ and $\beta\circ\phi=\phi'\circ \beta$, 
which is the desired isomorphism. 
\end{proof}

\begin{remark}\label{beta}
In the proof of Theorem~\ref{NikThm131}, we see that 
\[\overline{\beta_2|_{L_+}}=(\overline{\psi_+}, \overline{\id})|\Gamma_{K_+S_+}^{\perp}/\Gamma_{K_+S_+}, \quad 
\overline{\beta_1|_{L_-}}=(\overline{\psi_-}, \overline{\id})|\Gamma_{K_-S_-}^{\perp}/\Gamma_{K_-S_-}. \]
Moreover, if $L_{\pm}$ is indefinite, then we can take $T_1$ and $T_2$ as $L_+$ and $L_-$ respectively. 
Hence we see that 
\begin{equation}\label{xi_{pm}}
\xi_+=(\overline{\psi_+}, \overline{\id})|\Gamma_{K_+S_+}^{\perp}/\Gamma_{K_+S_+}, \quad 
\xi_-=(\overline{\psi_-}, \overline{\id})|\Gamma_{K_-S_-}^{\perp}/\Gamma_{K_-S_-}. 
\end{equation}
\end{remark}

\section{Classification}

The construction of the list (\ref{list}) from the unimodular involution with condition is as follows (see \cite{Nik4} for more details): 
Let $(L, \phi, i)$ be a unimodular involution with the condition $(S, \theta)$. 
We write 
\[L_{\pm}=\{x\in L \mid \phi(x) = \pm x\}. \]
Define $q:=q_{L_+}$. 
The primitive embedding $i\colon S\to L$ defines primitive embeddings $i_{\pm}\colon S_{\pm}\to L_{\pm}$. 
Hence we define $2$-elementary groups 
\[H_{\pm}:=p_{S_{\pm}}(\Gamma_{L_{\mp}S_{\pm}})\subset (S_{\pm}^*\cap \tfrac{1}{2}S_{\pm})/S_{\pm}. \]
Note that $p_{S_{\pm}}$ are injective, since $i_{\pm}$ are primitive. 
We can also say that $\Gamma_{L_-S_+}$ (resp.~$\Gamma_{L_+S_-}$) is the graph of injective homomorphism 
\[\gamma_{H_+}\colon H_+\to A_{L_-} \quad \text{(resp.~$\gamma_{H_-}\colon H_-\to A_{L_+}$)}. \]
Note that the notation of $\gamma_{H_{\pm}}$ is slightly different from that of \cite{Nik4}. 
We define the embedding of forms $\gamma_r$ and the quadratic form $q_r$ on $(H_+\oplus H_-)/\Gamma_{\gamma}$ as 
\begin{align*}
\gamma_r&:=(\gamma_{L_+L_-}^{-1}\circ \gamma_{H_+}, \gamma_{H_-})\colon H_+\oplus H_-/\Gamma_{\gamma} \to q, \\
q_r&:=q\circ \gamma_r,  
\end{align*}
where $\gamma_{L_+L_-}$ is an isomorphism between $A_{L_+}$ and $A_{L_-}$. 
The even lattices $K_{\pm}$ are defined by $K_{\pm}:=(S_{\pm})^{\perp}_{L_{\pm}}$. 
The quadratic forms $-k_{\pm}$ in (\ref{k_{pm}}) are equal to discriminant forms of $(L_{\mp}\oplus S_{\pm})^{\wedge}$. 
Hence the sign reversing isometies give $\gamma_{K_{\pm}}\colon q_{K_{\pm}}\to k_{\pm}$. 

From now on, we regard $L=H^2(X, \mathbb{Z})$, $\phi=\epsilon^*$, and $S=\{x\in L \mid g^*(x)=-x\}\cong E_8(2)$. 
It is known that 
\[L_+\cong U(2)\oplus E_8(2),\quad L_-\cong U\oplus U(2)\oplus E_8(2)\]
and these do not depend on $\epsilon$ (\cite{BP}). 

\begin{lemma}\label{S_{pm}}
Suppose that $S=E_8(2)$ and $\theta$ is an involution of $S$. 
Then the isomorphism class of $(S_+, S_-)$ is one of the following: 
\begin{align*}
(S_+(\tfrac{1}{2}), S_-(\tfrac{1}{2}))=&\ (E_8, \{0\}),\ (E_7, A_1),\ (D_6, A_1^2),\ (D_4\oplus A_1, A_1^3), (D_4, D_4), \\
&\ (A_1^4, A_1^4),\ (A_1^3, D_4\oplus A_1),\ (A_1^2, D_6),\ (A_1, E_7),\ (\{0\}, E_8). 
\end{align*} 
\end{lemma}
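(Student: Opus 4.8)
The plan is to view an involution $\theta$ of $S = E_8(2)$ as an involution of the underlying lattice $E_8$ (since $E_8(2)$ and $E_8$ have the same automorphism group), and then invoke the known classification of involutions of $E_8$, or equivalently of primitive embeddings of the fixed lattice into $E_8$. Concretely, write $M_+ = S_+(\tfrac12)$ and $M_- = S_-(\tfrac12)$; these are orthogonal sublattices of $E_8$ with $M_+ \oplus M_- \subset E_8$ of finite index, both negative definite and of total rank $8$. Since $E_8$ is unimodular, each of $M_+, M_-$ is the orthogonal complement of the other, so each is a primitive sublattice of $E_8$. The task therefore reduces to: classify, up to isometry of $E_8$, the primitive negative-definite sublattices $M_-$ of $E_8$, together with the induced splitting; equivalently, classify the isometry classes of pairs $(M_+, M_-)$ that can arise as $(\,^\perp M_-, M_-)$.

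First I would reduce to root sublattices. The key point is that $S = E_8(2)$ has the property that every vector has square divisible by $4$; after rescaling by $\tfrac12$, $M_+$ and $M_-$ are even lattices, and $-M_\pm$ are positive-definite even lattices embedding into $E_8$. I would argue that $M_\pm$ must in fact be (the negatives of) root lattices: an involution $\theta$ of $S=E_8(2)$ is determined by a choice of $\pm1$-eigenspace decomposition, and reflecting in the roots of $E_8$ generates a large enough group that any primitive sublattice arising this way is generated by roots. More precisely, I would use that $S_- = M_-(2)$ must be $2$-elementary with $A_{S_-}$ of length equal to $\operatorname{rank} M_-$ (each generator of $M_-^*/M_-$ contributes, after rescaling), which forces $\operatorname{rank} M_- = l(A_{M_-(2)})$; combined with $M_-$ being a primitive sublattice of the root lattice $E_8$, this pins $M_-$ down to a root lattice. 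This is the step I expect to be the main obstacle: showing rigorously that only root-lattice summands occur, and ruling out primitive sublattices like $A_1 \oplus D_4$ versus $D_4 \oplus A_1$ giving genuinely the same pair, requires care with which orthogonal pairs of root sublattices actually sit inside $E_8$.

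Next I would enumerate the orthogonal pairs of root lattices $(R_+, R_-)$ inside $E_8$ with $\operatorname{rank} R_+ + \operatorname{rank} R_- = 8$ and $R_+ = R_-^\perp$, $R_- = R_+^\perp$. The classical Borel--de Siebenthal / Dynkin analysis of sub-root-systems of $E_8$ lists the maximal ones; iterating, the complementary pairs of mutually orthogonal root systems of total rank $8$ that are each other's orthogonal complements are exactly $E_8 \perp 0$, $E_7 \perp A_1$, $D_6 \perp A_1^2$ (inside $D_8 \subset E_8$), $D_4 \perp D_4$, $A_1^4 \perp A_1^4$, and their reverses, plus $D_4\oplus A_1 \perp A_1^3$. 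I would verify each case actually occurs as a primitive orthogonal pair in $E_8$ (not just as an abstract orthogonal decomposition) and that the list is complete by a rank-and-discriminant count using \eqref{rank>=l}: for $M_-$ primitive in $E_8$ with complement $M_+$, one has $A_{M_+} \cong A_{M_-}$, so $l(A_{M_+}) = l(A_{M_-})$, and the constraint $\operatorname{rank} M_\pm \geq l(A_{M_\pm})$ together with $l(A_{M_+}) = l(A_{M_-})$ and $\operatorname{rank} M_+ + \operatorname{rank} M_- = 8$ severely limits the possibilities; checking the finitely many candidate root lattices of each rank against the requirement that both they and their complements embed primitively in $E_8$ finishes the enumeration. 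Finally I would note the symmetry $(S_+, S_-) \leftrightarrow (S_-, S_+)$ is genuine (both orders occur since $E_8$ is unimodular), which is why the list is symmetric, completing the proof.
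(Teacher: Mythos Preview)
Your overall strategy---reduce to $E_8$, identify $M_\pm = S_\pm(\tfrac12)$ as mutually orthogonal primitive sublattices, and enumerate---is sound, but the step you yourself flag as the main obstacle is indeed a genuine gap. Neither of your two arguments for showing $M_\pm$ are root lattices works: the claim that ``reflecting in the roots of $E_8$ generates a large enough group that any primitive sublattice arising this way is generated by roots'' is not a proof (arbitrary primitive sublattices of $E_8$ need not be root lattices), and your second attempt via $A_{S_-}$ is confused---rescaling $M_-$ to $M_-(2)$ enlarges the discriminant group by a factor $(\mathbb{Z}/2)^{\rank M_-}$, which does not by itself force $l(A_{M_-})=\rank M_-$ or pin down $M_-$ as a root lattice.

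The missing observation, which bypasses the root-lattice issue entirely, is that the eigenlattices $M_\pm$ of an involution on the \emph{unimodular} lattice $E_8$ are automatically even $2$-elementary (this is stated in the paper's preliminaries). The paper's proof then proceeds directly: assuming $\rank M_+\le 4$ by symmetry, Nikulin's classification of even $2$-elementary lattices by the invariants $(r,l,\delta)$ lists the only possibilities as $(0,0,0)$, $(1,1,1)$, $(2,2,1)$, $(3,3,1)$, $(4,4,1)$, $(4,2,0)$; each of $\{0\}$, $A_1$, $A_1^2$, $A_1^3$, $A_1^4$, $D_4$ has the corresponding invariants and is unique in its genus, so $M_+$ is determined, and $M_-$ is its orthogonal complement. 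This is much shorter than a Borel--de Siebenthal enumeration and avoids the delicate verification of which orthogonal root-lattice pairs actually embed primitively in $E_8$.
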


\begin{proof}
It suffices to prove the lemma for $S(\frac{1}{2})=E_8$. 
Since $\theta$ is an involution, it follows that $S_{\pm}$ are even $2$-elementary lattices. 
We can assume that the rank of $S_+$ is at most $4$. 
By \cite[Theorem~3.6.2]{Nik2}, invariants $(r, l, \delta)$ of $S_+$ is one of the following: 
\[(0, 0, 0),\ (1, 1, 1),\ (2, 2, 1),\ (3, 3, 1),\ (4, 4, 1),\ (4, 2, 0). \]
We see that $\{0\}$, $A_1$, $A_1^2$, $A_1^3$, $A_1^4$ and $D_4$ have above invariants respectively, 
and these lattices have one class in their genus (cf.~\cite[Remark~1.14.6]{Nik2}). 
Hence $S_+$ is one of them. 
$S_-$ is obtained as orthogonal complement to $S_+$. 
\end{proof}

From this lemma, we calculate the list~(\ref{list}) for each $(S_+, S_-)$. 

\begin{lemma}\label{unique-emb}
Suppose that $S_+$ is one of them in Lemma~$\ref{S_{pm}}$. 
Then there exists a unique primitive embedding $S_+\to L_+$. 
\end{lemma}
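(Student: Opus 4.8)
The statement to prove is Lemma~\ref{unique-emb}: for each $S_+$ appearing in Lemma~\ref{S_{pm}}, the primitive embedding $S_+\to L_+$ is unique (up to isomorphism), where $L_+\cong U(2)\oplus E_8(2)$. The natural tool is Nikulin's criterion for existence and uniqueness of primitive embeddings of an even lattice into an even lattice in terms of discriminant forms (\cite[Proposition~1.15.1]{Nik2}). So the plan is: first recall that a primitive embedding $S_+\to L_+$ into an even lattice with given discriminant form $q_{L_+}$ is equivalent, by that proposition, to the data of an even lattice $K_+$ (the orthogonal complement, which must have signature $\Sign L_+-\Sign S_+$) together with a subgroup $H_+\subset A_{S_+}$ and a gluing isomorphism, subject to the compatibility $q_{K_+}\cong$ the appropriate form built from $q_{S_+}$, $q_{L_+}$ and $H_+$; and that two such embeddings are isomorphic iff the corresponding data are related by automorphisms of $A_{S_+}$ fixing $q_{S_+}$ and of $K_+$.

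Second, I would go through the list of $S_+$ (the left-hand entries: $E_8, E_7, D_6, D_4\oplus A_1, D_4, A_1^4, A_1^3, A_1^2, A_1, \{0\}$, taken up to the symmetry that lets us assume $\operatorname{rank}S_+\le 4$, so really $\{0\}, A_1, A_1^2, A_1^3, A_1^4, D_4$ — note $S_+$ here is what is written $S_+(\tfrac12)$ in the lemma, i.e.\ a rescaled root lattice, hence $S_+=A_1(2)$ etc.). For each, the discriminant form $q_{S_+}$ is a standard $2$-elementary form, $q_{L_+}=u\oplus u\oplus u^4$ (discriminant form of $U(2)\oplus E_8(2)$, which is $2$-elementary of length $10$), and the possible subgroups $H_+$ and glue maps are severely constrained because everything in sight is $2$-elementary; the resulting $K_+$ is forced to be one of the lattices already listed in Table~\ref{table2} ($U\oplus U(2)\oplus E_8(2)$, $U\oplus U(2)\oplus E_7(2)$, etc.). The key point making \emph{uniqueness} work is that each of these candidate orthogonal complements $K_+$ is unique in its genus and has $\rmO(K_+)\to\rmO(q_{K_+})$ surjective — a fact that can be checked using \cite[Theorem~1.14.2, Remark~1.14.6]{Nik2} or the $(r,l,\delta)$ classification of Proposition~\ref{NikThm362} for the $2$-elementary ones, since these $K_+$ are indefinite $2$-elementary lattices. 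Given that, Proposition~\ref{NikProp192} (the Witt-type extension theorem for $2$-elementary forms) shows any two glue maps differ by an automorphism extending to $\rmO(q_{K_+})$ and hence to $\rmO(K_+)$, yielding an isomorphism of the two embeddings; and similarly the choice of $H_+$ is unique because there is essentially one subgroup of $A_{S_+}$ of the required size on which the relevant forms match.

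Third, for the existence half (implicit in ``there exists a unique''), I would exhibit in each case one explicit embedding, most cleanly by writing $L_+\cong U(2)\oplus E_8(2)$ and using a chain of sublattice relations among root lattices: $A_1\subset E_7^\perp$ inside $E_8$, $A_1^2\subset D_6^\perp$, $D_4\oplus A_1\subset (D_4\oplus A_1)^\perp$ wait — rather, use that $E_8$ contains $A_1\oplus E_7$, $A_1^2\oplus D_6$, $A_1^3\oplus D_4\oplus A_1$, $A_1^4\oplus A_1^4$, $D_4\oplus D_4$ as finite-index sublattices; after rescaling by $2$ this realizes each $S_+\subset E_8(2)\subset L_+$ primitively, and then one reads off $K_+$.

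**Main obstacle.** The genuinely delicate step is the \emph{uniqueness} bookkeeping: verifying in each of the (six essentially distinct) cases that the orthogonal complement $K_+$ is uniquely determined — not just its genus but its isomorphism class — and that $\rmO(K_+)\twoheadrightarrow\rmO(q_{K_+})$, so that Proposition~\ref{NikProp192} can be applied to absorb the ambiguity in the gluing. For the $2$-elementary candidates this is Proposition~\ref{NikThm362} plus a surjectivity statement; but some candidate $K_+$ (e.g.\ those involving $D_4(2)$) are not $2$-elementary, and there one must invoke the more general uniqueness-in-genus and $\rmO\to\rmO(q)$ surjectivity criteria of Nikulin, checking the rank-versus-length inequalities \eqref{rank>=l} hold with enough slack (here they do, since $\operatorname{rank}K_+\ge 10$ while $l(A_{K_+})\le 10$, and $K_+$ is indefinite). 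I expect the proof to dispatch the uniqueness of the glue via Proposition~\ref{NikProp192} uniformly, and to handle the finitely many $K_+$ by citing their known structure, so the write-up is a short case check rather than a computation.
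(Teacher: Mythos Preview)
Your approach would in principle work, but it misses a dramatic simplification and contains several slips that would make the execution more painful than you anticipate.

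The paper's proof is one line: rescale both lattices by $\tfrac{1}{2}$. Then $L_+(\tfrac{1}{2})\cong U\oplus E_8$ is an even \emph{unimodular} lattice of signature $(1,9)$, and $S_+(\tfrac{1}{2})$ is one of the even negative-definite root lattices $\{0\},A_1,\dots,E_8$ of rank at most $8$. Since rescaling is a bijection on primitive embeddings, the question reduces to uniqueness of a primitive embedding of a small even lattice into an even unimodular one, and Nikulin's Theorem~1.14.4 (cited in the paper) disposes of this immediately: the signature inequalities $1>0$, $9>\operatorname{rank}S_+(\tfrac12)$ and the length inequality $10-\operatorname{rank}S_+(\tfrac12)\ge 2+l(A_{S_+(1/2)})$ hold in every case. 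No case analysis, no gluing data, no checking $\rmO(K_+)\twoheadrightarrow\rmO(q_{K_+})$.

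By contrast, working directly with $L_+=U(2)\oplus E_8(2)$ forces you through Proposition~1.15.1 with a length-$10$ discriminant form (it is $u^5$, incidentally, not $u\oplus u\oplus u^4=u^6$), and your uniqueness bookkeeping has holes. First, the reduction to $\operatorname{rank}S_+\le 4$ is not available here: that symmetry was used inside Lemma~\ref{S_{pm}} to classify the pairs $(S_+,S_-)$, but Lemma~\ref{unique-emb} asks about embeddings of \emph{every} $S_+$ on the list, including $E_7(2)$ and $E_8(2)$. Second, your claim ``$\operatorname{rank}K_+\ge 10$'' is wrong---$\operatorname{rank}K_+=10-\operatorname{rank}S_+$, so it ranges from $2$ to $10$; in particular for $S_+=\{0\}$ one has $K_+=L_+$ with $\operatorname{rank}K_+=l(A_{K_+})=10$, so there is no slack at all in \eqref{rank>=l} and the generic uniqueness-in-genus criteria of \cite{Nik2} do not apply directly. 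You would have to fall back on ad hoc arguments (here, that $K_+(\tfrac12)$ is unimodular)---which is exactly the rescaling trick, applied piecemeal rather than once at the outset.
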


\begin{proof}
Since $S_+(\frac{1}{2})$ is an even negative definite lattice of rank less than $8$ and $L_+(\frac{1}{2})\cong U\oplus E_8$ is a unimodular lattice of signature $(1, 9)$, 
the lemma follows from \cite[Theorem~1.14.4]{Nik2}. 
\end{proof}

\begin{corollary}\label{K_+=U(2)+S_-}
We have $K_+\cong U(2)\oplus S_-$ in all cases. 
In particular, $\Gamma_{S_+S_-}\cong \Gamma_{K_+S_+}$. 
\end{corollary}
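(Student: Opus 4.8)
The plan is to reduce everything to the uniqueness statement of Lemma~\ref{unique-emb}. Since that lemma produces a unique primitive embedding $S_+\to L_+$ up to isomorphism, it suffices to exhibit \emph{one} primitive embedding of $S_+$ into $L_+$ whose orthogonal complement is isomorphic to $U(2)\oplus S_-$; the embedding $i_+\colon S_+\to L_+$ induced by $i$ will then be isomorphic to it, and hence $K_+=(S_+)^{\perp}_{L_+}\cong U(2)\oplus S_-$. Note that in this form the argument is uniform over the ten cases of Lemma~\ref{S_{pm}}, which matches the phrase ``in all cases''.

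To construct such an embedding I would first record two elementary facts about the inclusion $S_+\hookrightarrow S=E_8(2)$: it is primitive, and $(S_+)^{\perp}_{S}=S_-$. Primitivity is immediate, since $nx\in S_+$ with $x\in S$ and $n\neq 0$ forces $\theta(nx)=nx$, hence $\theta(x)=x$. The equality $(S_+)^{\perp}_{S}=S_-$ is the standard fact that, for an involution of a non-degenerate lattice, the $(+1)$- and $(-1)$-eigenlattices are mutually orthogonal complements. Now recall from \cite{BP} that $L_+\cong U(2)\oplus E_8(2)$. Composing $S_+\hookrightarrow S\cong E_8(2)$ with the inclusion of $E_8(2)$ as the second orthogonal summand of $U(2)\oplus E_8(2)\cong L_+$ gives a primitive embedding (a composition of primitive embeddings is primitive), and its orthogonal complement in $U(2)\oplus E_8(2)$ is $U(2)\oplus (S_+)^{\perp}_{S}=U(2)\oplus S_-$. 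This is the embedding we want, so the first assertion follows.

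For the ``in particular'' part I would identify both finite groups with $\Gamma_+\subset A_{S_+}$. Because $S\cong E_8(2)$ is primitive in $L$ and $S_+\oplus S_-$ has finite index in $S$, the primitive closure $(S_+\oplus S_-)^{\wedge}$ in $L$ equals $S$, so $\Gamma_{S_+S_-}=S/(S_+\oplus S_-)$, which under $p_{S_+}$ is exactly $\Gamma_+$. On the other hand $K_+\oplus S_+\subset L_+$ has finite index and $L_+$ is primitive in $L$ (being an eigenlattice of $\epsilon^*$), so $\Gamma_{K_+S_+}=L_+/(K_+\oplus S_+)$; fixing an isomorphism $L_+\cong U(2)\oplus E_8(2)$ that carries $S_+$ onto the subgroup constructed above (legitimate by Lemma~\ref{unique-emb}), $K_+$ is carried onto $U(2)\oplus S_-$ and $L_+/(K_+\oplus S_+)\cong E_8(2)/(S_+\oplus S_-)$, which under $p_{S_+}$ is again $\Gamma_+$. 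Hence $\Gamma_{K_+S_+}$ and $\Gamma_{S_+S_-}$ coincide as subgroups of $A_{S_+}$, in particular they are isomorphic.

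I do not expect a serious obstacle: the one substantive input, uniqueness of the primitive embedding $S_+\to L_+$, is already supplied by Lemma~\ref{unique-emb}. What remains is bookkeeping that must be done carefully — that primitivity and the formation of orthogonal complements behave well under composition of embeddings and under transport along the chosen isomorphism $L_+\cong U(2)\oplus E_8(2)$, and that both $\Gamma$-groups project onto the same subgroup $\Gamma_+$ of $A_{S_+}$ — but nothing deeper.
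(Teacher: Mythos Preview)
Your proof is correct and follows essentially the same approach as the paper: both use Lemma~\ref{unique-emb} to reduce to the obvious embedding $S_+\hookrightarrow E_8(2)\hookrightarrow U(2)\oplus E_8(2)\cong L_+$, read off $K_+\cong U(2)\oplus S_-$, and then identify $\Gamma_{K_+S_+}\cong L_+/(K_+\oplus S_+)\cong S/(S_+\oplus S_-)=\Gamma_{S_+S_-}$. You have simply made explicit the primitivity checks and the transport along the isomorphism that the paper leaves implicit.
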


\begin{proof}
Recall that $L_+\cong U(2)\oplus E_8(2)$ and $S\cong E_8(2)$. 
By Lemma~\ref{unique-emb}, a primitive embedding $S_+\to L_+$ is unique. 
Hence $K_+=(S_+)_{L_+}^{\perp}$ is uniquely determined as $U(2)\oplus S_-$. 
Therefore we see that 
$\Gamma_{K_+S_+}=L_+/(K_+\oplus S_+)\cong (U(2)\oplus S)/(U(2)\oplus S_-\oplus S_+)\cong S/(S_+\oplus S_-)=\Gamma_{S_+S_-}$. 
\end{proof}

\begin{lemma}\label{H_{pm}}
On $H_{\pm}$, we have the following: 
\begin{enumerate}
\item $\Gamma_+\subset H_+ = \frac{1}{2}S_+/S_+$, $\Gamma_-\subset H_- \subset \frac{1}{2}S_-/S_-$. 
\item $q_{S_{\pm}}|_{H_{\pm}}\equiv 0 \pmod{1}$. 
\item $\rank S_{-}-1 \leqq \rank H_{-}\leqq \rank S_{-}$. 
\item $q_{S_+}|_{\Gamma_+}$ $($resp.~$q_{S_-}|_{\Gamma_-}$$)$ is a direct summand of $q_{S_+}|_{H_+}$ $($resp.~$q_{S_-}|_{H_-}$$)$. 
\end{enumerate}
\end{lemma}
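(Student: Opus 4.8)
The plan is to reduce every assertion to a short computation with the explicit data $S_{\pm}=N_{\pm}(2)$, where $N_{\pm}:=S_{\pm}(\tfrac12)$ is an even $2$-elementary root lattice, together with the known isometry types $L_+\cong U(2)\oplus E_8(2)$, $L_-\cong U\oplus U(2)\oplus E_8(2)$ and Corollary~\ref{K_+=U(2)+S_-}. Write $r_{\pm}=\rank S_{\pm}$ and $l=l(A_{N_+})=l(A_{N_-})$; the last equality holds because $N_+$ and $N_-$ are mutually orthogonal primitive sublattices of the unimodular lattice $E_8$, so $A_{N_+}\cong A_{N_-}$. The starting observation is the identity $S_{\pm}^{*}\cap\tfrac12 S_{\pm}=\tfrac12 S_{\pm}$, immediate from $S_{\pm}^{*}=\tfrac12 N_{\pm}^{*}\supseteq\tfrac12 N_{\pm}$; hence $(S_{\pm}^{*}\cap\tfrac12 S_{\pm})/S_{\pm}=\tfrac12 S_{\pm}/S_{\pm}\cong(\mathbb Z/2)^{r_{\pm}}$. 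Combined with the inclusions $\Gamma_{\pm}\subseteq H_{\pm}\subseteq(S_{\pm}^{*}\cap\tfrac12 S_{\pm})/S_{\pm}$ from Theorem~\ref{NikThm131}, this gives all of $(1)$ except $H_+=\tfrac12 S_+/S_+$. For the latter, note that $p_{S_+}$ is injective on $\Gamma_{L_-S_+}$, so $|H_+|=[(L_-\oplus S_+)^{\wedge}:L_-\oplus S_+]$, and that $(L_-\oplus S_+)^{\wedge}$ is the orthogonal complement of $K_+$ in the unimodular lattice $L$, so this index squares to $|A_{L_-}|\,|A_{S_+}|/|A_{K_+}|$. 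Substituting $|A_{L_-}|=2^{10}$, $|A_{S_+}|=2^{r_++l}$ and $|A_{K_+}|=|A_{U(2)}|\,|A_{S_-}|=2^{2+r_-+l}$ (Corollary~\ref{K_+=U(2)+S_-}) yields $|H_+|=2^{r_+}$, the full order of $\tfrac12 S_+/S_+$.

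Assertion $(2)$ then follows from $(1)$: any $h\in H_{\pm}$ has the form $\tfrac12 y+S_{\pm}$ with $y\in S_{\pm}=N_{\pm}(2)$, whence $q_{S_{\pm}}(h)=\tfrac14(y,y)_{S_{\pm}}=\tfrac12(y,y)_{N_{\pm}}\in\mathbb Z$ because $N_{\pm}$ is even. The same calculation, carried out on a class in $\Gamma_{\pm}$ represented by the eigenprojection $e_{\pm}\in N_{\pm}^{*}$ of a vector $e\in E_8$, gives $b_{S_{\pm}}(\overline{e_{\pm}},h)=(e_{\pm},y)_{N_{\pm}}\in\mathbb Z$; thus $\Gamma_{\pm}$ lies in the radical of the bilinear form underlying $q_{S_{\pm}}|_{H_{\pm}}$, a fact reused in $(4)$.

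For $(3)$ the upper bound is clear from $H_-\subseteq\tfrac12 S_-/S_-\cong(\mathbb Z/2)^{r_-}$. The lower bound is the single step that genuinely uses the isometry type of $L_-$, and the plan is to apply the second inequality of \eqref{rank>=l} to the overlattice $L_+\oplus S_-\subseteq(L_+\oplus S_-)^{\wedge}$. Its index is $|H_-|$, and since $\Gamma_{L_+S_-}\cong H_-$ is $2$-elementary, $l(\Gamma_{L_+S_-})=\rank H_-$; moreover $l(A_{L_+})=10$, and $l(A_{S_-})=r_-$ — the latter because $2A_{N(2)}=A_N$ forces $A_{N(2)}\cong(\mathbb Z/2)^{\rank N-l(A_N)}\oplus(\mathbb Z/4)^{l(A_N)}$, of length $\rank N$. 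Because $(L_+\oplus S_-)^{\wedge}$ is the orthogonal complement of $K_-$ in the unimodular $L$, one has $l(A_{(L_+\oplus S_-)^{\wedge}})=l(A_{K_-})\le\rank K_-=\rank L_--r_-=12-r_-$. Chaining, $12-r_-\ge l(A_{(L_+\oplus S_-)^{\wedge}})\ge l(A_{L_+\oplus S_-})-2\rank H_-=(10+r_-)-2\rank H_-$, which rearranges to $\rank H_-\ge r_--1$.

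For $(4)$: by $(2)$, $q_{S_{\pm}}|_{H_{\pm}}$ is a $2$-elementary finite quadratic form with integral values whose bilinear radical $R$ contains $\Gamma_{\pm}$. Pick a $\mathbb Z/2$-complement $C$ to $R$ in $H_{\pm}$; since $b_{S_{\pm}}(R,C)=0$ we get the orthogonal splitting $q_{S_{\pm}}|_{H_{\pm}}=q_{S_{\pm}}|_R\oplus q_{S_{\pm}}|_C$, and because the bilinear form vanishes on $R$ the form $q_{S_{\pm}}|_R$ splits orthogonally over every $\mathbb Z/2$-decomposition of $R$; taking a complement $C'$ of $\Gamma_{\pm}$ in $R$ exhibits $q_{S_{\pm}}|_{H_{\pm}}=q_{S_{\pm}}|_{\Gamma_{\pm}}\oplus q_{S_{\pm}}|_{C'\oplus C}$, i.e. $q_{S_{\pm}}|_{\Gamma_{\pm}}$ is a direct summand. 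The only delicate point in the whole proof is the lower bound in $(3)$, where one must bring in $\rank L_-=12$, $l(A_{L_+})=10$ and the computation $l(A_{N(2)})=\rank N$; everything else is formal manipulation of discriminant forms.
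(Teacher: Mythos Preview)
Your proof is correct, and part (3) matches the paper's argument essentially verbatim. The other three parts take noticeably different routes, however.

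For the equality $H_+=\tfrac12 S_+/S_+$ in (1), the paper argues directly: since $L_+(\tfrac12)\cong U\oplus E_8$ is unimodular, every $x\in\tfrac12 S_+$ lies in $L_+^*=\tfrac12 L_+$, and then unimodularity of $L$ produces $y\in L_-^*$ with $x+y\in L$, hence $x+y\in (S_+\oplus L_-)^{\wedge}$. Your determinant count via Corollary~\ref{K_+=U(2)+S_-} is equally valid but less immediate; the paper never needs the value of $l(A_{N_+})$ here.

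For (2), the paper observes that $\gamma_r$ embeds $q_r$ into $q=q_{L_+}=u^5$, which takes only integer values, so $q_{S_\pm}|_{H_\pm}=q_r|_{H_\pm}\equiv 0\pmod 1$. Your direct computation in $N_\pm$ is more elementary and independent of the Nikulin machinery.

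For (4), the paper writes $A_{S_\pm}\cong(\mathbb Z/2)^a\oplus(\mathbb Z/4)^b$, identifies $\Gamma_\pm=2A_{S_\pm}$ with the $(\mathbb Z/2)^b$ piece and $\tfrac12 S_\pm/S_\pm$ with the full $2$-torsion $(\mathbb Z/2)^{a+b}$, so that $q_{S_\pm}|_{\Gamma_\pm}$ is visibly a direct summand of $q_{S_\pm}|_{\tfrac12 S_\pm/S_\pm}$ and hence of any intermediate $H_\pm$. Your argument---that $\Gamma_\pm$ lies in the bilinear radical of $q_{S_\pm}|_{H_\pm}$, and the radical always splits off---is more conceptual and avoids the explicit structure of $A_{S_\pm}$; it would generalize beyond the specific lattices at hand.
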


\begin{proof}
Since $S_{\pm}(\frac{1}{2})$ are even lattices, we have $H_{\pm} \subset \frac{1}{2}S_{\pm}/S_{\pm}$. 
Let $x\in \frac{1}{2}S_+$. 
From $L_+(\frac{1}{2})\cong U\oplus E_8$, we have $x\in L_+^*$. 
Since $L$ is unimodular, there exists $y\in L_-^*$ such that $x+y\in L$, which implies $x+y\in (S_+\oplus L_-)^{\wedge}$. 
Therefore $H_+ = \frac{1}{2}S_+/S_+$. 

Since $\gamma_r\colon q_r\to q$ is an embedding and $q=u^5\equiv 0 \pmod{1}$, $q_r$ also satisfies $q_r\equiv 0 \pmod{1}$. 
Hence $q_{S_{\pm}}|_{H_{\pm}}=q_r|_{H_{\pm}}\equiv 0 \pmod{1}$. 

By $K_-=(S_-)_{L_-}^{\perp}$, we see that $\rank K_- = \rank L_- - \rank S_- = 12-\rank S_-$. 
From (\ref{rank>=l}), we see that 
\[l(A_{K_-})=l(A_{(L_+\oplus S_-)^{\wedge}})\geqq l(A_{L_+\oplus S_-})-2l(\Gamma_{L_+S_-})=10+l(A_{S_-})-2l(\Gamma_{L_+S_-}). \]
Obviously $l(A_{S_-})=\rank S_-$. 
The primitivity of $L_+$ in $(L_+\oplus S_-)^{\wedge}$ gives $l(\Gamma_{L_+S_-})=l(H_-)=\rank H_-$. 
Therefore $\rank K_- \geqq l(A_{K_-})$ yields 
\[12-\rank S_- \geqq 10+\rank S_- - 2\rank H_-. \]
Hence we have (3). 

In our $S_+$, we can write $A_{S_+}=(\mathbb{Z}/2\mathbb{Z})^a\oplus(\mathbb{Z}/4\mathbb{Z})^b$ 
and $q_{S_+}=q_2\oplus q_4$ where $q_2$ (resp.~$q_4$) is a finite quadratic form on $(\mathbb{Z}/2\mathbb{Z})^a$ (resp.~$(\mathbb{Z}/4\mathbb{Z})^b$). 
Since $\Gamma_+=2A_{S_+}=\{2x \mid x\in A_{S_+}\}$, we have $q_{S_+}|_{\Gamma_+}=2q_4$ 
where $2q_4$ denotes a finite quadratic form whose generators are twice the size of those of $q_4$. 
Since $q_{S_+}|_{\frac{1}{2}S_+/S_+}=q_2\oplus 2q_4$, 
we see that $q_{S_+}|_{\Gamma_+}$ is a direct summand of $q_{S_+}|_{\frac{1}{2}S_+/S_+}$. 
Hence $q_{S_+}|_{\Gamma_+}$ is also that of $q_{S_+}|_{H_+}$. 
The same proof works for $q_{S_-}|_{\Gamma_-}$. 
\end{proof}

\begin{lemma}\label{Gamma_{pm}=H_{pm}}
\begin{enumerate}
\item In cases $S_-(\frac{1}{2})=E_8, E_7, D_6, D_4 \oplus A_1$, we have $\Gamma_+=H_+=\tfrac{1}{2}S_+/S_+$. 

\item In cases $S_-(\frac{1}{2})=A_1^4, A_1^3, A_1^2, A_1, \{0\}$, we have $\Gamma_-= H_-=\tfrac{1}{2}S_-/S_-$. 
\end{enumerate}
\end{lemma}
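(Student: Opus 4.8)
The plan is to exploit the key symmetry provided by Corollary~\ref{K_+=U(2)+S_-}: since $K_+ \cong U(2)\oplus S_-$, the orthogonal complement $K_+$ always contains $U(2)$ as a direct summand, and its discriminant form is $q_{K_+} = u \oplus (-q_{S_-})$ (after a sign adjustment). The crucial point is that $A_{K_+}$ and $A_{S_-}$ differ only by the extra factor coming from $U(2)$, which is $u$, a \emph{nondegenerate} form. This will let me run a rank/length argument symmetric to that in Lemma~\ref{H_{pm}}(3), but now for $H_+$ in terms of $S_-$, and dually for $H_-$ in terms of $S_+$.

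For part~(1), I first observe that by Lemma~\ref{H_{pm}}(1) we already have $H_+ = \tfrac12 S_+/S_+$, so the only thing to prove is $\Gamma_+ = H_+$, i.e. $l(\Gamma_+) = \rank H_+$ (equivalently $\rank S_+$, since $S_+(\tfrac12) = \{0\}, A_1, A_1^2, A_1^3$ has no $\mathbb{Z}/4$-part in these four cases, so $\tfrac12 S_+/S_+ \cong (\mathbb{Z}/2)^{\rank S_+}$). Now $\Gamma_+$ is the graph of $\gamma\colon \Gamma_+ \to \Gamma_-$, so $l(\Gamma_+) = l(\Gamma_-) = l(\Gamma_\gamma)$, and $\Gamma_\gamma = S/(S_+\oplus S_-)$ has $l(\Gamma_\gamma) = l(\Gamma_-) \leq \rank S_- \leq \operatorname{rank}$ in these cases where $S_-(\tfrac12)$ is $E_8, E_7, D_6, D_4\oplus A_1$ of ranks $8,7,6,5$. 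That alone does not close the gap, so the real input is the discriminant-form identity for $S=E_8(2)$: since $S\cong E_8(2)$ is $2$-elementary with $A_S \cong (\mathbb{Z}/2)^8$, and $S$ is an overlattice of $S_+\oplus S_-$, the index $[S:S_+\oplus S_-]$ satisfies $l(A_{S_+}) + l(A_{S_-}) - 2l(\Gamma_\gamma) = l(A_S) = 8$ together with $\rank S_+ + \rank S_- = 8$; combining with $l(A_{S_+}) = \rank S_+$ (these are $A_1^k$, $k\le3$) and $l(A_{S_-})\geq \rank S_-$ (indeed $=\rank S_-$ for $E_7,D_6,D_4\oplus A_1$, while for $E_8$ one has $S_+=\{0\}$ trivially), one forces $l(\Gamma_\gamma) = \rank S_+$, hence $\Gamma_+ = \tfrac12 S_+/S_+ = H_+$.

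For part~(2), by symmetry $S_+ \leftrightarrow S_-$ one runs the identical argument: in these cases $S_-(\tfrac12)$ is $A_1^4, A_1^3, A_1^2, A_1, \{0\}$ so $\tfrac12 S_-/S_- \cong (\mathbb{Z}/2)^{\rank S_-}$ and $l(A_{S_-}) = \rank S_-$, while $S_+(\tfrac12)$ is $A_1^4, D_4\oplus A_1, D_6, E_7, E_8$ with $l(A_{S_+}) \geq \rank S_+$; the same overlattice length identity $l(A_{S_+}) + l(A_{S_-}) - 2l(\Gamma_\gamma) = 8$ with $\rank S_+ + \rank S_- = 8$ forces $l(\Gamma_\gamma) = \rank S_-$, hence $\Gamma_- = \tfrac12 S_-/S_- \subseteq H_- \subseteq \tfrac12 S_-/S_-$ by Lemma~\ref{H_{pm}}(1), so equality holds throughout.

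The main obstacle is getting the length bound for $\Gamma_\gamma$ to be sharp rather than just an inequality; the inequality $l(\Gamma_\gamma) \le \rank S_\pm$ is immediate, but the reverse requires feeding in the precise value $l(A_S) = 8$ for $S = E_8(2)$ and the fact that in each case exactly one of $S_\pm$ is a direct sum of $A_1$'s (so its discriminant length equals its rank), using Lemma~\ref{H_{pm}}(4) or the overlattice inequality \eqref{rank>=l} applied to $S_+\oplus S_- \subset S$ to pin down the index. I expect a short case inspection (four cases for each part) rather than a uniform calculation, and the $E_8$ / $\{0\}$ endpoints are trivial since one summand vanishes.
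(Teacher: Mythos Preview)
Your central identity $l(A_{S_+}) + l(A_{S_-}) - 2l(\Gamma_\gamma) = l(A_S) = 8$ is false, and in fact your own substitutions contradict your conclusion. You plug in $l(A_{S_+})=\rank S_+$ and $l(A_{S_-})=\rank S_-$, which with $\rank S_+ + \rank S_- = 8$ would force $l(\Gamma_\gamma)=0$, not $l(\Gamma_\gamma)=\rank S_+$ as you assert. Concretely, for $S_+(\tfrac12)=A_1$, $S_-(\tfrac12)=E_7$ one has $A_{S_+}\cong\mathbb{Z}/4$, $A_{S_-}\cong(\mathbb{Z}/2)^6\oplus\mathbb{Z}/4$, so $l(A_{S_+})+l(A_{S_-})=1+7=8=l(A_S)$, yet $|\Gamma_\gamma|=2$ so $l(\Gamma_\gamma)=1$. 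The inequality \eqref{rank>=l} is only an inequality, and here it is strict. (Your opening remarks about $K_+\cong U(2)\oplus S_-$ are also never used.)

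The fix is to replace the length $l$ by $\log_2$ of the order, which gives a genuine equality coming from determinants: $|A_{S_+}|\cdot|A_{S_-}|=|A_S|\cdot|\Gamma_\gamma|^2$. Since $\Gamma_\gamma$ is $2$-elementary this reads $\log_2|A_{S_+}|+\log_2|A_{S_-}|-2\,l(\Gamma_\gamma)=8$. In the cases of part~(1) one has $S_+=A_1(2)^k$ with $|A_{S_+}|=4^k$, and $|A_{S_-}|=2^{\rank S_-}\cdot|\det S_-(\tfrac12)|=2^8$ in each case (using $|\det E_8|=1$, $|\det E_7|=2$, $|\det D_6|=4$, $|\det(D_4\oplus A_1)|=8$). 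Hence $2k+8-2\,l(\Gamma_\gamma)=8$, giving $l(\Gamma_\gamma)=k=\rank S_+$, so $|\Gamma_+|=2^k=|\tfrac12 S_+/S_+|$ and the inclusion $\Gamma_+\subset H_+=\tfrac12 S_+/S_+$ from Lemma~\ref{H_{pm}}(1) is an equality. Part~(2) is the symmetric computation. This is exactly what the paper does, just phrased uniformly rather than by the explicit identification $p_{S_+}(E_8/(A_1\oplus E_7))=A_{A_1}$ in a sample case.
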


\begin{proof}
We give the proof only for the case $S_-(\frac{1}{2})=E_7$; 
the other cases are left to the reader. 
In case $S_-(\frac{1}{2})=E_7$, we have $S_+(\frac{1}{2})=A_1$. 
Hence we see that 
\begin{align*}
\Gamma_+&=p_{S_+}(S/(S_+\oplus S_-))=p_{S_+}(E_8(2)/(A_1(2)\oplus E_7(2))) \\
&\cong p_{S_+}(E_8/(A_1\oplus E_7))=A_{A_1}\cong \mathbb{Z}/2\mathbb{Z}. 
\end{align*}
At the same time, we see that 
\[\tfrac{1}{2}S_+/S_+=\tfrac{1}{2}A_1(2)/A_1(2)\cong \mathbb{Z}/2\mathbb{Z}. \] 
The lemma follows from Lemma~\ref{H_{pm}}~(1). 
\end{proof}

We consider the behavior of $\gamma_{H_{\pm}}\colon H_{\pm}\to A_{L_{\mp}}$. 
Note that 
\[\Gamma_{K_{\pm}S_{\pm}}^{\perp}\cap A_{K_{\pm}}
=(\Gamma_{K_{\pm}S_{\pm}}^{\perp}\cap A_{K_{\pm}})/(\Gamma_{K_{\pm}S_{\pm}}\cap A_{K_{\pm}})
\subset \Gamma_{K_{\pm}S_{\pm}}^{\perp}/\Gamma_{K_{\pm}S_{\pm}}
=A_{L_{\pm}}. \]

\begin{definition}
Let $\widetilde{A_{K_+}}:=\Gamma_{K_+S_+}^{\perp}\cap A_{K_+} \subset A_{L_+}$ and $\widetilde{A_{K_-}}:=\Gamma_{K_-S_-}^{\perp}\cap A_{K_-} \subset A_{L_-}$. 
The subgroup $\widetilde{H_-}$ of $H_-$ and $\widetilde{H_+}$ of $H_+$ are defined by 
\[\widetilde{H_-}:=\gamma_{H_-}^{-1}(\widetilde{A_{K_+}}), \quad \widetilde{H_+}:=\gamma_{H_+}^{-1}(\widetilde{A_{K_-}}). \]
\end{definition}

We see that $(\widetilde{H_-}, \gamma_{H_-}|_{\widetilde{H_-}})$ and $(\widetilde{H_+}, \gamma_{H_+}|_{\widetilde{H_+}})$ 
determine $(K_+\oplus S_-)^{\wedge}$ and $(S_+\oplus K_-)^{\wedge}$ respectively, 
since $(H_{\mp}, \gamma_{H_{\mp}})$ determine $(L_{\pm}\oplus S_{\mp})^{\wedge}$. 
It follows from Corollary~\ref{K_+=U(2)+S_-} that $\Gamma_+=p_{S_+}(\Gamma_{K_+S_+})$. 
Therefore we have 
\begin{equation}\label{wt{H_-}}
\Gamma_-\subset \widetilde{H_-}\subset H_-. 
\end{equation} 

From Theorem~\ref{NikThm131}, 
if two unimodular involutions with the condition $(S, \theta)$ determined by the lists (\ref{list}) and $(H_{\pm}', q_r', q', \gamma_r', K_{\pm}', \gamma_{K_{\pm}'}')$ respectively are isomorphic, 
then there exist $\xi_{\pm}\in \rmO(\pm q)$ and $\psi_{\pm} \in \Isom(K_{\pm}, K_{\pm}')$ with the conditions. 
As stated in Remark~\ref{beta}, we have (\ref{xi_{pm}}). 
It follows that 
\[\widetilde{H_-}=\widetilde{H_-'} \quad \text{(resp.~$\widetilde{H_+}=\widetilde{H_+'}$), }\]
since $(\overline{\psi_+}, \overline{\id})|\Gamma_{K_+S_+}^{\perp}/\Gamma_{K_+S_+}$ (resp.~$(\overline{\psi_-}, \overline{\id})|\Gamma_{K_-S_-}^{\perp}/\Gamma_{K_-S_-}$) 
induces an isomorphism between $\widetilde{A_{K_+}}$ and $\widetilde{A_{K_+'}}$ (resp.~$\widetilde{A_{K_-}}$ and $\widetilde{A_{K_-'}}$). 
Hence we define the following equivalence relation: 
\begin{align*}
&\gamma_{H_{\mp}}\sim \gamma_{H_{\mp}'}\\ 
\stackrel{\text{def}}{\Longleftrightarrow}\ 
&\text{there exists $\xi_{\pm} \in \rmO(\pm q)$ such that $\xi_{\pm}\circ\gamma_{H_{\mp}}=\gamma_{H_{\mp}'}$ and $\widetilde{H_{\mp}}=\widetilde{H_{\mp}'}$. }
\end{align*}
The existence condition of $\xi_{\pm}$ follows from Proposition~\ref{NikProp192}. 
Thus we have a one-to-one correspondence between $\{\gamma_{H_{\mp}}\}/{\sim}$ and $\{\widetilde{H_{\mp}}\}$. 

\begin{lemma}\label{H_-/wt{H_-}}
We have an equality 
\[|H_-|/|\widetilde{H_-}|=|H_+|/|\widetilde{H_+}|. \]
\end{lemma}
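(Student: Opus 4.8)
The plan is to express both ratios in terms of lattice invariants that are manifestly symmetric in $+$ and $-$, using the chain of primitive closures built from $S_+$, $S_-$, $K_+$, $K_-$ and the unimodularity of $L$. The key observation is that $(\widetilde{H_+}, \gamma_{H_+}|_{\widetilde{H_+}})$ determines $(S_+\oplus K_-)^{\wedge}$ and $(\widetilde{H_-},\gamma_{H_-}|_{\widetilde{H_-}})$ determines $(K_+\oplus S_-)^{\wedge}$; since $L$ is unimodular, $(K_+\oplus S_-)^{\wedge}$ is exactly the orthogonal complement $(K_-)^{\perp}_L$ and $(S_+\oplus K_-)^{\wedge}=(K_+)^{\perp}_L$. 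So I would compare the two overlattices $(S_+\oplus S_-\oplus K_+\oplus K_-)\subset L$ via two different intermediate steps and count indices.

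First I would set $M=S_+\oplus S_-\oplus K_+\oplus K_-$ and recall that $\operatorname{rank} M=\operatorname{rank} L=22$, so $M\subset L$ with $[L:M]^2=|A_M|=|A_{S_+}|\,|A_{S_-}|\,|A_{K_+}|\,|A_{K_-}|$. Then I would factor $[L:M]$ through the primitive closures: on one hand $M\subset (S_+\oplus K_-)^{\wedge}\oplus(K_+\oplus S_-)^{\wedge}\subset L$, where the first inclusion has index $|\Gamma_{S_+K_-}|\cdot|\Gamma_{K_+S_-}|$ and the second is the gluing of $L_-=(S_+\oplus K_-)^{\wedge}$ — wait, one must be careful which complementary pair sits inside $L_{\pm}$. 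In fact $S_+\oplus K_+\subset L_+$ with quotient $\Gamma_{S_+K_+}$ of order $|H_+|$ (by definition of $H_+$ and Corollary~\ref{K_+=U(2)+S_-}, $\Gamma_{S_+K_+}\cong\Gamma_{S_+S_-}$ has order $|H_+|$ since $\gamma_{H_+}$ is injective on $H_+$... actually $|\Gamma_{S_+K_+}|$ need not equal $|H_+|$; I should instead use $|\Gamma_{L_-S_+}|=|H_+|$ and $|\Gamma_{L_+S_-}|=|H_-|$ directly). The cleanest route: $[L:L_+\oplus L_-]=|A_{L_+}|^{1/2}=|A_{L_-}|^{1/2}=:d$, and inside $L_+$ one has $[L_+:S_+\oplus K_+]=|\Gamma_{S_+K_+}|$ with $|A_{S_+}|\,|A_{K_+}|=|\Gamma_{S_+K_+}|^2\,|A_{L_+}|$, similarly for $L_-$. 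The subtlety is relating $|\Gamma_{S_+K_+}|$ to $|H_\pm|$ and to $|\widetilde H_\pm|$; here I would use that $(K_+\oplus S_-)^{\wedge}$ has index $|\widetilde{H_-}|$ over $K_+\oplus S_-$ (parallel to $\Gamma_{L_+S_-}$ having order $|H_-|$) and likewise $(S_+\oplus K_-)^{\wedge}$ has index $|\widetilde{H_+}|$ over $S_+\oplus K_-$.

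Putting these together, I would compute $[L: S_+\oplus S_-\oplus K_+\oplus K_-]$ in two ways: once by going $L\supset L_+\oplus L_- \supset (S_+\oplus K_+)\oplus(S_-\oplus K_-)$, giving index $d\cdot|\Gamma_{S_+K_+}|\cdot|\Gamma_{S_-K_-}|$; and once by going $L\supset (K_+\oplus S_-)^{\wedge}\oplus(K_-\oplus S_+)^{\wedge}\supset (K_+\oplus S_-)\oplus(K_-\oplus S_+)$, giving index $d'\cdot|\widetilde{H_-}|\cdot|\widetilde{H_+}|$ where $d'$ is the gluing index of the decomposition $L=(K_+\oplus S_-)^{\wedge}\oplus(S_+\oplus K_-)^{\wedge}$, i.e. $d'=|A_{(K_+\oplus S_-)^{\wedge}}|^{1/2}$. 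Finally I would observe $|A_{(K_+\oplus S_-)^{\wedge}}|=|A_{(S_+\oplus K_-)^{\wedge}}|$ (they are orthogonal complements in unimodular $L$) and relate $|\Gamma_{S_+K_+}|$ to $|H_+|$ via $|A_{S_+}|\,|A_{K_+}|=|\Gamma_{S_+K_+}|^2\,|A_{L_+}|$ together with the analogous identity $|A_{K_+}|\,|A_{S_-}|=|\widetilde{H_-}|^2\,|A_{(K_+\oplus S_-)^{\wedge}}|$; eliminating the $|A|$-terms between the two bookkeeping identities yields $|H_-|/|\widetilde{H_-}|=|H_+|/|\widetilde{H_+}|$.

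The main obstacle I anticipate is the index bookkeeping: one must track carefully that $\Gamma_{L_\mp S_\pm}$ has order exactly $|H_\pm|$ (which holds because $\gamma_{H_\pm}$ is injective and $H_\pm$ is its domain), that $(K_\pm\oplus S_\mp)^{\wedge}/(K_\pm\oplus S_\mp)$ has order $|\widetilde{H_\mp}|$ (the defining property of $\widetilde H_\mp$ via $\widetilde{A_{K_\pm}}$), and that the two "gluing indices" $d$ for $L=L_+\oplus L_-$ and $d'$ for $L=(K_+\oplus S_-)^{\wedge}\oplus(S_+\oplus K_-)^{\wedge}$, while generally different, both drop out once all the discriminant-group orders are substituted — since in a unimodular lattice the product of orders of discriminant groups of two complementary primitive sublattices is a perfect square and the square root equals the gluing index. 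Everything else is a routine multiplicativity-of-index computation, so no deep input beyond \eqref{rank>=l}-style counting and the structural facts already established in Corollary~\ref{K_+=U(2)+S_-} and the discussion preceding this lemma.
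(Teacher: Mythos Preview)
Your strategy---reduce everything to discriminant/index bookkeeping and use that complementary primitive sublattices of a unimodular lattice have equal discriminant---is exactly the paper's. The paper compresses the whole argument into the single identity $|\Gamma_{L_+S_-}|/|\Gamma_{K_+S_-}|=|\Gamma_{S_+L_-}|/|\Gamma_{S_+K_-}|$ (declared ``easy to check''), after which primitivity of the relevant embeddings gives $|H_-|=|\Gamma_{L_+S_-}|$, $|\widetilde{H_-}|=|\Gamma_{K_+S_-}|$, $|H_+|=|\Gamma_{S_+L_-}|$, $|\widetilde{H_+}|=|\Gamma_{S_+K_-}|$ and hence the lemma. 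You already isolate these last four facts in your final paragraph, so you have all the ingredients.

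Where your write-up has an actual gap is in the assembly. Your $[L:M]$-in-two-ways computation compares $|\Gamma_{S_+K_+}|\cdot|\Gamma_{S_-K_-}|$ against $|\widetilde{H_+}|\cdot|\widetilde{H_-}|$; neither side contains $|H_+|$ or $|H_-|$. Likewise the ``two bookkeeping identities'' you single out at the end, $|A_{S_+}||A_{K_+}|=|\Gamma_{S_+K_+}|^2|A_{L_+}|$ and $|A_{K_+}||A_{S_-}|=|\widetilde{H_-}|^2|A_{(K_+\oplus S_-)^\wedge}|$, do not involve $|H_{\pm}|$ either, so ``eliminating the $|A|$-terms'' between them cannot produce the desired ratio. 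The fix is to drop the $[L:M]$ detour and apply the discriminant identity directly to the four relevant primitive closures $(L_+\oplus S_-)^\wedge=K_-^\perp$, $(K_+\oplus S_-)^\wedge=T$, $(S_+\oplus L_-)^\wedge=K_+^\perp$, $(S_+\oplus K_-)^\wedge=T^\perp$:
\[
|H_-|^2=\frac{|A_{L_+}||A_{S_-}|}{|A_{K_-}|},\quad |\widetilde{H_-}|^2=\frac{|A_{K_+}||A_{S_-}|}{|A_T|},\quad
|H_+|^2=\frac{|A_{S_+}||A_{L_-}|}{|A_{K_+}|},\quad |\widetilde{H_+}|^2=\frac{|A_{S_+}||A_{K_-}|}{|A_T|}.
\]
Dividing in pairs and using $|A_{L_+}|=|A_{L_-}|$ (and $|A_T|=|A_{T^\perp}|$) gives the lemma immediately---this is precisely the paper's ``easy to check''. (Minor slip: $[L:L_+\oplus L_-]=(|A_{L_+}||A_{L_-}|)^{1/2}=|A_{L_+}|$, not $|A_{L_+}|^{1/2}$.)
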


\begin{proof}
It is easy to check that 
\[|\Gamma_{L_+S_-}|/|\Gamma_{K_+S_-}|=|\Gamma_{S_+L_-}|/|\Gamma_{S_+K_-}|. \]
Hence the primitivity shows the lemma. 
\end{proof}

\begin{lemma}\label{all 2-elm}
Let $\lambda\in K_+^*$, $\mu\in S_+^*$, $\nu\in S_-^*$. 
If $\lambda+\mu+\nu\in L$, then $\lambda\in \frac{1}{2}K_+$, $\mu\in \frac{1}{2}S_+$, $\nu\in \frac{1}{2}S_-$. 
\end{lemma}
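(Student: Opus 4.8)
The plan is to exploit the fact that $L = H^2(X,\mathbb Z)$ is unimodular together with the already-established description of its $\pm1$-eigenlattices under $\epsilon^*$ and the structure of $S = E_8(2)$ as the $(-1)$-eigenlattice of $g^*$. Write $\lambda + \mu + \nu \in L$ with $\lambda \in K_+^*$, $\mu \in S_+^*$, $\nu \in S_-^*$. The key structural input is Corollary~\ref{K_+=U(2)+S_-}, which gives $K_+ \cong U(2) \oplus S_-$, so that $L_+ = K_+ \oplus S_+$ up to finite index; combined with $L_- \cong U \oplus U(2) \oplus E_8(2)$ and the fact that $L_+ \oplus L_-$ has finite index in $L$, every element of $L$ has half-integral coordinates in a suitable basis. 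More precisely, since $S_+(\tfrac12)$ and $S_-(\tfrac12)$ are even lattices and $K_+(\tfrac12) \cong U \oplus S_-(\tfrac12)$ is even, each of $S_+, S_-, K_+$ has all values in $2\mathbb Z$, hence $\tfrac12 S_+ \subset S_+^*$, etc.

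First I would record the two elementary containments we want to prove are equivalent to: $2\lambda \in K_+$, $2\mu \in S_+$, $2\nu \in S_-$. The strategy is to pair $x := \lambda+\mu+\nu$ against well-chosen elements of $L$ and use unimodularity. Since $S_+ \subset L_+$ and $S_- \subset L_-$ are orthogonal to each other and to nothing extra inside $K_+$ except via the glue, pairing $x$ with an arbitrary $s \in S_+$ gives $(x,s) = (\mu, s) + (\lambda, s)$; but $\lambda \in K_+^* \subset L_+ \otimes \mathbb Q$ and $K_+ \perp S_+$ inside $L_+$, so $(\lambda, s) = 0$ for $s \in S_+$, whence $(x,s) = (\mu,s) \in \mathbb Z$ because $x \in L$ and $s \in L$. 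This shows $\mu \in S_+^*$ pairs integrally with $S_+$, which we already knew; the real gain comes from pairing with elements of $L$ that are \emph{not} in $S_+ \oplus S_- \oplus K_+$. Here I would use the glue vectors: the discriminant groups $A_{S_+}, A_{S_-}, A_{K_+}$ are all $2$-elementary (Lemma~\ref{H_{pm}}(2) and the explicit forms in Section~2), so $2 S_+^* \subset S_+$ and similarly for the others. Pairing $x$ with a lift $\tilde t \in L$ of a glue vector $t$ and reducing modulo $\mathbb Z$ forces the $A_{S_+}$-, $A_{S_-}$-, $A_{K_+}$-components of $(\bar\lambda, \bar\mu, \bar\nu)$ to satisfy the orthogonality relations cutting out $\Gamma_\gamma$ and $\Gamma_{K_+S_+}$; in particular the components of $\lambda, \mu, \nu$ in the respective discriminant groups must land in subgroups on which the relevant quadratic/bilinear forms are defined, and since those discriminant groups have exponent $2$, their elements are killed by multiplication by $2$, giving $2\lambda \in K_+$, $2\mu \in S_+$, $2\nu \in S_-$ directly.

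The cleanest way to organize the last step is: consider the finite-index inclusion $S_+ \oplus S_- \oplus K_+ \hookrightarrow L$; the quotient $L/(S_+ \oplus S_- \oplus K_+)$ is a subgroup of $A_{S_+} \oplus A_{S_-} \oplus A_{K_+}$, which is $2$-elementary because each factor is (this uses $U(2)$ and the listed root-lattice discriminant forms being $2$-elementary, i.e.\ $2A_{K_+}=0$, etc.). An element $x = \lambda+\mu+\nu \in L$ maps to the class $(\bar\lambda,\bar\mu,\bar\nu)$ in this $2$-elementary quotient, hence $2x \in S_+\oplus S_-\oplus K_+$, i.e.\ $2\lambda + 2\mu + 2\nu \in S_+\oplus S_-\oplus K_+$; since this is a \emph{direct} sum and $2\lambda \in K_+\otimes\mathbb Q$, $2\mu \in S_+\otimes\mathbb Q$, $2\nu \in S_-\otimes\mathbb Q$ lie in the respective rational spans, each summand lies in its own lattice. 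The only genuine obstacle is establishing that $A_{K_+}$ is $2$-elementary and that $L/(S_+\oplus S_-\oplus K_+)$ really is $2$-elementary rather than merely a $2$-group of higher exponent; this follows from $K_+ \cong U(2)\oplus S_-$ (Corollary~\ref{K_+=U(2)+S_-}) together with the fact that $S_- = S_-$ and $U(2)$ have $2$-elementary discriminant, so $A_{K_+} \cong A_{U(2)} \oplus A_{S_-}$ is $2$-elementary — everything else is bookkeeping.
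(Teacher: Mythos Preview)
There is a genuine gap in your argument. Your ``cleanest'' route rests on the claim that the discriminant groups $A_{S_+}$, $A_{S_-}$, $A_{K_+}$ are $2$-elementary, so that $L/(S_+\oplus S_-\oplus K_+)$ (or rather its finite part) embeds into a $2$-elementary group. This claim is false. The lattices $S_\pm$ are sublattices of $E_8(2)$, not of $E_8$; while $S_\pm(\tfrac12)$ are $2$-elementary, the lattices $S_\pm$ themselves are not. For instance, when $S_+=A_1(2)=\langle -4\rangle$ one has $A_{S_+}\cong\mathbb{Z}/4\mathbb{Z}$, and the paper's own list of discriminant forms records $q_{E_7(2)}=u^3\oplus\langle\tfrac14\rangle$, $q_{D_4(2)}=v\oplus v(4)$, etc., all with $\mathbb{Z}/4\mathbb{Z}$ summands. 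Since $K_+\cong U(2)\oplus S_-$, the group $A_{K_+}$ inherits these $\mathbb{Z}/4\mathbb{Z}$ factors too. So the ambient group $A_{S_+}\oplus A_{S_-}\oplus A_{K_+}$ has exponent $4$ in general, and you cannot conclude $2x\in S_+\oplus S_-\oplus K_+$ from membership in it. (Your appeal to Lemma~\ref{H_{pm}}(2) does not help: that lemma concerns the restriction of $q_{S_\pm}$ to the subgroup $H_\pm\subset \tfrac12 S_\pm/S_\pm$, not the structure of the full $A_{S_\pm}$.) There is also a secondary slip: $S_+\oplus S_-\oplus K_+$ is not of finite index in $L$, since it misses the rank of $K_-$; you would need to pass to the primitive closure, but even then the exponent issue remains.

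The paper's proof avoids this by never looking at the triple decomposition directly. Instead it uses that $T=(K_+\oplus S_-)^\wedge$ is the \emph{fixed lattice of the involution $(g\circ\varepsilon)^*$} on the unimodular lattice $L$; this is what forces $L/(T\oplus T^\perp)$ to be $2$-elementary, a fact that does not require any information about $A_{S_\pm}$. Writing $x=(\lambda+\nu)+\mu$ with $\lambda+\nu\in T\otimes\mathbb{Q}$ and $\mu\in T^\perp\otimes\mathbb{Q}$, one gets $2\mu\in T^\perp\subset L$, and primitivity of $S_+$ in $L$ gives $2\mu\in S_+$. The remaining two assertions are obtained by repeating the argument with the other two involutions $\varepsilon^*$ and $g^*$ (whose eigenspaces isolate $\nu$ and $\lambda$, respectively). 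The essential point you are missing is that $2$-elementariness here comes from eigenlattice decompositions of involutions on a unimodular lattice, not from the discriminant groups of the individual pieces.
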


\begin{proof}
Let $T$ be a primitive sublattice of $L$ spanned by $K_+\oplus S_-$, that is, $T=(K_+\oplus S_-)^{\wedge}$. 
Since $T$ is also the fixed part of the action of the involution $(g\circ \epsilon)^*$ on $L$, it follows that $L/(T\oplus T^{\perp})$ is a $2$-elementary group. 
Hence we have $2(\lambda+\nu)+2\mu\in T\oplus T^{\perp}$, in particular $2\mu\in T^{\perp}\subset L$. 
Since $S_+$ is a primitive sublattice of $L$, we see that $2\mu\in S_+^*\cap L \subset (S_+)_L^{\wedge}=S_+$. 
We thus get $\mu\in \frac{1}{2}S_+$. 
The rest of the proof is left to the reader. 
\end{proof}

From this lemma, we see that 
\begin{equation}\label{half}
\gamma_{H_-}(H_-)\subset (\tfrac{1}{2}K_+/K_+ \oplus \tfrac{1}{2}S_+/S_+)/\Gamma_{K_+S_+}. 
\end{equation}

\begin{lemma}\label{wt{H_-}=H_-}
We have $\widetilde{H_{\pm}}=H_{\pm}$ unless $S_{\pm}=D_4(2)$. 
\end{lemma}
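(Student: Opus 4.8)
The plan is to prove the two statements $\widetilde{H_+}=H_+$ (unless $S_+=D_4(2)$) and $\widetilde{H_-}=H_-$ (unless $S_-=D_4(2)$) separately, using the case list of Lemma~\ref{S_{pm}}. First I would dispose of the ``$+$'' statement. By \eqref{wt{H_-}} applied with the roles of $\pm$ interchanged, $\Gamma_+\subset\widetilde{H_+}\subset H_+$. In the cases $S_-(\tfrac12)=E_8,E_7,D_6,D_4\oplus A_1$ Lemma~\ref{Gamma_{pm}=H_{pm}}~(1) already gives $\Gamma_+=H_+$, hence $\widetilde{H_+}=H_+$; in the remaining cases $S_+(\tfrac12)\in\{A_1^4,A_1^3,A_1^2,A_1,\{0\}\}$ (excluding $D_4$), and here I would argue directly that $\widetilde{H_+}=H_+$. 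Using Lemma~\ref{H_{pm}}~(1), $H_+=\tfrac12 S_+/S_+$, so the claim is that $\gamma_{H_+}(\tfrac12 S_+/S_+)\subset\widetilde{A_{K_-}}=\Gamma_{K_-S_-}^\perp\cap A_{K_-}$. Since $S_+(\tfrac12)$ is a sum of copies of $A_1$, the discriminant form $q_{S_+}$ is $\langle\tfrac14\rangle^{\oplus b}$ and $\tfrac12 S_+/S_+$ is generated by the order-$2$ elements $2\cdot(\text{generators})$; one checks these land in the part of $A_{L_+}$ pairing trivially with $\Gamma_{K_-S_-}$, which is where one invokes Lemma~\ref{all 2-elm}/\eqref{half} and Corollary~\ref{K_+=U(2)+S_-}.

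Next, for the ``$-$'' statement, I would treat the cases $S_-(\tfrac12)=A_1^4,A_1^3,A_1^2,A_1,\{0\}$ first: there Lemma~\ref{Gamma_{pm}=H_{pm}}~(2) gives $\Gamma_-=H_-$, and since $\Gamma_-\subset\widetilde{H_-}\subset H_-$ by \eqref{wt{H_-}}, we get $\widetilde{H_-}=H_-$ immediately. So only the cases $S_-(\tfrac12)=E_8,E_7,D_6,D_4\oplus A_1$ (the ones excluded from $D_4$) remain, and for these I plan to use the counting Lemma~\ref{H_-/wt{H_-}}: $|H_-|/|\widetilde{H_-}|=|H_+|/|\widetilde{H_+}|$. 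By the paragraph above, in each of these four cases $\widetilde{H_+}=H_+$ (since $S_+(\tfrac12)$ is one of $\{0\},A_1,A_1^2,A_1^3$, never $D_4$), so the right-hand side is $1$, forcing $|H_-|=|\widetilde{H_-}|$, hence $\widetilde{H_-}=H_-$ because of the inclusion $\widetilde{H_-}\subset H_-$.

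The only genuinely new computation is the direct verification in the paragraph-one cases ($S_+(\tfrac12)$ a power of $A_1$), namely that every element of $\tfrac12 S_+/S_+$ maps under $\gamma_{H_+}$ into $\widetilde{A_{K_-}}$. Concretely: $\widetilde{A_{K_-}}=\Gamma_{K_-S_-}^\perp\cap A_{K_-}$ inside $A_{L_-}=\Gamma_{K_-S_-}^\perp/\Gamma_{K_-S_-}$, and $\gamma_{H_+}(x)\in A_{L_-}$ lies in $\widetilde{A_{K_-}}$ precisely when a representative can be chosen in $K_-^*$ (so it pairs integrally with $S_-$) rather than genuinely involving the $S_-^*$-component. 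Using Lemma~\ref{all 2-elm} the element $x+\gamma_{H_+}(x)\in L$ decomposes with $S_+^*$-part $x\in\tfrac12 S_+$ and, since $q_{S_+}=\langle\tfrac14\rangle^{\oplus b}$ has $x^2\equiv 0\pmod 1$ by Lemma~\ref{H_{pm}}~(2), one deduces the $S_-^*$-component of $\gamma_{H_+}(x)$ must itself lie in $\tfrac12 S_-\cap S_-^*$ in a way compatible with $S_-$ being a root lattice — i.e.\ the obstruction to being in $\widetilde{A_{K_-}}$ sits in $\Gamma_-$, which is trivial here by Lemma~\ref{Gamma_{pm}=H_{pm}}~(2) in exactly these cases.

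I expect the main obstacle to be bookkeeping rather than conceptual: making the identification $\widetilde{A_{K_-}}=\{x\in A_{L_-}: x\perp\Gamma_{K_-S_-}$ and $x$ is represented in $A_{K_-}\}$ precise and then tracking which component of $\gamma_{H_+}(x)$ lives where, under the various identifications $A_{L_-}\cong\Gamma_{K_-S_-}^\perp/\Gamma_{K_-S_-}$ and $K_+\cong U(2)\oplus S_-$ from Corollary~\ref{K_+=U(2)+S_-}. Once the $A_1$-power cases are handled, the $E_8,E_7,D_6,D_4\oplus A_1$ cases cost nothing thanks to Lemma~\ref{H_-/wt{H_-}}, and the single excluded case $S_\pm=D_4(2)$ is exactly where $q_{S_\pm}|_{\Gamma_\pm}$ is not all of $q_{S_\pm}|_{H_\pm}$, so the argument correctly breaks down there.
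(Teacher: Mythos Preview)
Your overall plan—prove one of $\widetilde{H_\pm}=H_\pm$ directly and then deduce the other from Lemma~\ref{H_-/wt{H_-}}—is exactly the paper's strategy, but you have chosen the wrong side, and here that choice is not symmetric.  All of the tools set up just before the lemma are one-sided: the inclusion \eqref{wt{H_-}} ($\Gamma_-\subset\widetilde{H_-}$), the containment \eqref{half} for $\gamma_{H_-}$, and Corollary~\ref{K_+=U(2)+S_-} (which gives $p_{S_+}(\Gamma_{K_+S_+})=\Gamma_+$) are proved using the explicit identification $K_+\cong U(2)\oplus S_-$.  No analogue is available for $K_-$, so your opening step ``by \eqref{wt{H_-}} with the roles of $\pm$ interchanged, $\Gamma_+\subset\widetilde{H_+}$'' is unjustified, and the $\gamma_{H_+}$-version of \eqref{half} your third paragraph relies on is likewise not available (Lemma~\ref{all 2-elm} also decomposes along $K_+\oplus S_+\oplus S_-$, not $K_-$).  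The paper instead proves $\widetilde{H_-}=H_-$ first: the cases $S_-(\tfrac12)\in\{A_1^4,\dots,\{0\}\}$ are immediate from \eqref{wt{H_-}} and Lemma~\ref{Gamma_{pm}=H_{pm}}(2); in the cases $S_-(\tfrac12)\in\{E_8,E_7,D_6,D_4\oplus A_1\}$ one uses that $p_{S_+}(\Gamma_{K_+S_+})=\Gamma_+=\tfrac12 S_+/S_+$ (Corollary~\ref{K_+=U(2)+S_-} and Lemma~\ref{Gamma_{pm}=H_{pm}}(1)) together with \eqref{half} to see that the $S_+$-component of any $\gamma_{H_-}(x)$ can be absorbed into $\Gamma_{K_+S_+}$, hence $\gamma_{H_-}(x)\in\widetilde{A_{K_+}}$.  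Then $\widetilde{H_+}=H_+$ drops out of Lemma~\ref{H_-/wt{H_-}}.

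There are also two slips in your case analysis.  After disposing of $S_-(\tfrac12)\in\{E_8,E_7,D_6,D_4\oplus A_1\}$, the remaining non-$D_4$ cases have $S_+(\tfrac12)\in\{A_1^4,D_4\oplus A_1,D_6,E_7,E_8\}$, not $\{A_1^4,A_1^3,A_1^2,A_1,\{0\}\}$; so ``$S_+(\tfrac12)$ is a sum of copies of $A_1$'' fails in four of the five cases you still need.  And in your final sentence, Lemma~\ref{Gamma_{pm}=H_{pm}}(2) says $\Gamma_-=\tfrac12 S_-/S_-$ in those cases—the maximal group, not the trivial one—so the claimed vanishing of the obstruction does not follow.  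If you want to rescue your order of attack you would first have to establish $K_-$-analogues of Corollary~\ref{K_+=U(2)+S_-}, \eqref{wt{H_-}} and \eqref{half}; it is much less work to argue on the minus side as the paper does.
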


\begin{proof}
In cases $S_-(\frac{1}{2})=A_1^4,\ A_1^3,\ A_1^2,\ A_1,\ \{0\}$, 
it follows from (\ref{wt{H_-}}) and Lemma~\ref{Gamma_{pm}=H_{pm}} that $\widetilde{H_-}=H_-$. 
In cases $S_-(\frac{1}{2})=E_8,\ E_7,\ D_6,\ D_4\oplus A_1$, 
we have $\gamma_{H_-}(\Gamma_-)\equiv \frac{1}{2}S_+/S_+ \pmod{\Gamma_{K_+S_+}}$ by Lemma~\ref{Gamma_{pm}=H_{pm}}. 
From (\ref{half}), we see that $\widetilde{H_-}=H_-$. 
It follows from Lemma~\ref{H_-/wt{H_-}} that $\widetilde{H_+}=H_+$. 
\end{proof}

\begin{theorem}\label{Our lists}
The lists~$(\ref{list})$ are classified as Table~$\ref{table1}$ and Table~$\ref{table2}$ in Theorem~$\ref{MainThm}$. 
\end{theorem}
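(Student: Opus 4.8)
The plan is to carry out, case by case, the construction of the list~$(\ref{list})$ for each of the ten possibilities of $(S_+, S_-)$ listed in Lemma~\ref{S_{pm}}, and to show that the discrete data reduces exactly to the entries of Tables~\ref{table1} and~\ref{table2}. The point of the preceding lemmas is that most of the pieces of the list are already pinned down: we have $q = q_{L_+} = u^5$ by the computation of $L_+ \cong U(2)\oplus E_8(2)$; we have $K_+ \cong U(2)\oplus S_-$ by Corollary~\ref{K_+=U(2)+S_-}, so $K_+$ and $\gamma_{K_+}$ carry no further information; and by Lemma~\ref{H_{pm}}~(1)--(2) the groups $H_\pm$ sit between $\Gamma_\pm$ and $\tfrac12 S_\pm/S_\pm$ with $q_{S_\pm}|_{H_\pm}\equiv 0\pmod 1$, while one of the two inclusions is an equality by Lemma~\ref{Gamma_{pm}=H_{pm}}. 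Finally, by the one-to-one correspondence established before Lemma~\ref{H_-/wt{H_-}}, classifying the pair of embeddings $\gamma_{H_\mp}$ up to the equivalence~$\sim$ amounts to classifying the pair of subgroups $(\widetilde{H_-}, \widetilde{H_+})$, and by Lemma~\ref{wt{H_-}=H_-} these coincide with $(H_-, H_+)$ except when $S_\pm = D_4(2)$.

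First I would treat the ``extreme'' cases where one of $S_\pm$ is $E_8, E_7, D_6, D_4\oplus A_1$ (cases [1]--[9] and [15]--[18]). Here Lemma~\ref{Gamma_{pm}=H_{pm}} gives $\Gamma_+=H_+=\tfrac12 S_+/S_+$ on one side, Lemma~\ref{wt{H_-}=H_-} gives $\widetilde{H_-}=H_-$, and Lemma~\ref{H_-/wt{H_-}} forces $\widetilde{H_+}=H_+$, so the only remaining freedom is the choice of the subgroup $H_-$ with $\Gamma_-\subset H_-\subset \tfrac12 S_-/S_-$ satisfying $q_{S_-}|_{H_-}\equiv 0\pmod 1$, of corank $0$ or $1$ in $\tfrac12 S_-/S_-$ by Lemma~\ref{H_{pm}}~(3), and such that $q_{S_-}|_{\Gamma_-}$ is a direct summand of $q_{S_-}|_{H_-}$ by Lemma~\ref{H_{pm}}~(4). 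One then enumerates these subgroups for each $S_-$ — e.g.\ for $S_-=E_8(2)$, $\tfrac12 S_-/S_-$ carries $u^4$ and one lists the isotropic subgroups of corank $\le 1$, getting $u^4$, $u^3\oplus w$, $u^3\oplus z$ — and reads off $q_{S_-}|_{H_-}$. The remaining invariants in Table~\ref{table2} are then mechanical: $k_-$ is computed from its definition~\eqref{k_{pm}} as $(-q_{S_-}\oplus q)|_{\Gamma^\perp}/\Gamma$, $K_-$ is identified from its discriminant form $-k_-$ together with its rank $12-\rank S_-$ and signature (using uniqueness in the genus via Proposition~\ref{NikThm362} where applicable), and $(r,l,\delta)$ of the $2$-elementary lattice $L_+\oplus\langle -1\rangle$-type fixed lattice of $\theta=g\circ\varepsilon$ follows from $r=22-\rank S = \ldots$ no: from the standard relation for the non-symplectic involution, $r=10-\rank K_- + \ldots$, i.e.\ $r$, $l$ are read from $L^{\langle\theta\rangle}=(K_+\oplus S_-)^{\wedge}$ and its discriminant.

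The genuinely interesting case is $S_+(\tfrac12)=S_-(\tfrac12)=D_4$ (cases [10]--[13]): here \emph{neither} inclusion $\Gamma_\pm\subset H_\pm\subset\tfrac12 S_\pm/S_\pm$ need be an equality, $\widetilde{H_\pm}$ can differ from $H_\pm$, and the symmetry between the two factors means one must be careful not to overcount by the isometry swapping $S_+$ and $S_-$ — though in fact no such swap is available since $L_+\not\cong L_-$, so $\gamma_{H_+}$ and $\gamma_{H_-}$ play genuinely different roles. I expect this to be the main obstacle. The strategy is: determine $\Gamma_-$ (it is $p_{S_-}$ of $E_8(2)/(D_4(2)\oplus D_4(2))\cong E_8/(D_4\oplus D_4)\cong (\mathbb Z/2)^2$, with quadratic form $v$), note $\tfrac12 S_-/S_-$ carries $v\oplus z^2$ (since $D_4(2)$ has discriminant form $v\oplus v(4)$ and the $\tfrac12$-overlattice picks out $v\oplus 2v(4)=v\oplus z^2$), enumerate the subgroups $H_-$ with $\Gamma_-\subset H_-$ and $q_{S_-}|_{H_-}$ isotropic and $q_{S_-}|_{\Gamma_-}=v$ a summand — this yields $q_{S_-}|_{H_-}\in\{v\oplus z^2, w\oplus z^2\}$ — and then, for each such $H_-$, enumerate the possible $\widetilde{H_-}\subset H_-$ containing $\Gamma_-$ (constrained by~\eqref{half} and by the compatibility Lemma~\ref{H_-/wt{H_-}} with $H_+/\widetilde{H_+}$). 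This splits the $v\oplus z^2$ case into [10] ($\widetilde{H_-}=H_-$) and [11] ($\widetilde{H_-}\subsetneq H_-$, with $q_{S_-}|_{\widetilde{H_-}}=w\oplus z^2$), and the $w\oplus z^2$ case into [12] and [13] ($q_{S_-}|_{\widetilde{H_-}}=z^2$); one checks that each is realized by an admissible $(L,\phi,i)$ and that distinct entries are non-isomorphic by Theorem~\ref{NikThm131} together with the $\widetilde{H}$-invariance established above. Throughout, existence of each list as an actual unimodular involution is guaranteed because the necessary numerical conditions of~\cite[\S1.15]{Nik2} are met (the relevant lattices $K_\pm$, $L_\pm$ are indefinite or $E_8(2)$-like with the right invariants), so no separate existence argument is needed beyond checking the inequalities.
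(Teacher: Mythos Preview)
Your overall plan matches the paper's proof closely: reduce via Lemmas~\ref{H_{pm}}, \ref{Gamma_{pm}=H_{pm}}, \ref{wt{H_-}=H_-} to a small finite enumeration, with the only delicate case being $S_\pm=D_4(2)$. Two points need correction.

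First, a computational slip in the $D_4$ case: $q_{S_-}|_{\Gamma_-}$ equals $z^2$, not $v$. Indeed $\Gamma_-=2A_{S_-}$ (as in the proof of Lemma~\ref{H_{pm}}(4)), and for $q_{S_-}=v\oplus v(4)$ the subgroup $2A_{S_-}$ lies in the $v(4)$-factor with all squares $\equiv 0\pmod 2$. Your enumeration $q_{S_-}|_{H_-}\in\{v\oplus z^2,\,w\oplus z^2\}$ is correct, but it follows from $z^2$ being a direct summand (Lemma~\ref{H_{pm}}(4)), not from $v$ being one; had $\Gamma_-$ really carried $v$, the rank-$3$ case would give $v\oplus z$, which is not isomorphic to $w\oplus z^2$.

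Second, and more substantively: when $H_-=\tfrac12 S_-/S_-$ (so $q_{S_-}|_{H_-}=v\oplus z^2$), the subgroups $\widetilde{H_-}\supset\Gamma_-=z^2$ have \emph{three} a~priori isomorphism types, namely $z^2$, $w\oplus z^2$, and $v\oplus z^2$, not just the two you list for cases~[10] and~[11]. Ruling out $\widetilde{H_-}=\Gamma_-$ (i.e.\ $q_{S_-}|_{\widetilde{H_-}}=z^2$) is the one genuinely nontrivial step in the whole classification, and the paper devotes a separate argument to it: one writes $H_-=\widetilde{H_-}\oplus G_{H_-}$ with $G_{H_-}$ carrying the form $v$, uses~\eqref{half} to force $\gamma_{H_-}(G_{H_-})\subset G_{K_+}\oplus G_{S_+}$ (complements of the glue in $\tfrac12 K_+/K_+$ and $\tfrac12 S_+/S_+$), observes that each nonzero element of $\gamma_{H_-}(G_{H_-})$ must project nontrivially to \emph{both} factors, and then checks that no such embedding is compatible with the values of $v$. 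Your reference to ``constrained by~\eqref{half}'' points at the right tool, but the exclusion does not fall out mechanically and should be flagged as the crux of the $D_4$ case.

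(A minor omission: your two-way split into the ``extreme'' cases and the $D_4$ case leaves out $S_\pm(\tfrac12)=A_1^4$, case~[14]; there, however, Lemma~\ref{Gamma_{pm}=H_{pm}}(2) and Lemma~\ref{wt{H_-}=H_-} pin everything down with no further work.)
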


\begin{proof}
By Lemmas \ref{H_{pm}} and \ref{wt{H_-}=H_-}, we calculate $(H_-, K_+, K_-)$ for each $(S_+, S_-)$ except the case $S_{\pm}=D_4(2)$. 
In case $S_{\pm}=D_4(2)$, we have to calculate $(H_-, \widetilde{H_-}, K_+, K_-)$. 
We first calculate $H_-$. 

In case $S_- = E_8(2)$, we see that $q_{S_-}|_{\frac{1}{2}S_-/S_-}=u^4$. 
By Lemma~\ref{H_{pm}}~(3), $\rank H_-=8 \text{ or } 7$. 
For $\rank H_-=8$, we have $H_-=\frac{1}{2}S_-/S_-$. 
For $\rank H_-=7$, we have $q_{S_-}|_{H_-}=u^3\oplus w \text{ or } u^3\oplus z$ by Lemma~\ref{H_{pm}}~(2). 

In case $S_- = E_7(2)$, we see that $q_{S_-}|_{\frac{1}{2}S_-/S_-}=u^3\oplus w$ and $q_{S_{\pm}}|_{\Gamma_{\pm}}=w$. 
By Lemma~\ref{H_{pm}}~(3), $\rank H_-=7 \text{ or } 6$. 
For $\rank H_-=7$, we have $H_-=\frac{1}{2}S_-/S_-$. 
For $\rank H_-=6$, we have $q_{S_-}|_{H_-}=u^2\oplus w^2$ by Lemma~\ref{H_{pm}}~(2) and (4) 
(note that we have $w\oplus z =w^2$). 
The same proof works for the cases $S_-(\frac{1}{2})=D_6,\ D_4 \oplus A_1$. So we omit it. 

In cases $S_-(\frac{1}{2}) = A_1^4,\ A_1^3,\ A_1^2,\ A_1,\ \{0\}$, 
we see that $q_{S_-}|_{H_-}=q_{S_-}|_{\frac{1}{2}S_-/S_-}$ by Lemma~\ref{Gamma_{pm}=H_{pm}}. 

We next deal with the case $S_{\pm} = D_4(2)$. 
We see that $q_{S_-}|_{\frac{1}{2}S_-/S_-}=v\oplus z^2$ and $q_{S_{\pm}}|_{\Gamma_{\pm}}=z^2$. 
By Lemma~\ref{H_{pm}}~(3), $\rank H_-=4 \text{ or } 3$. 
For $\rank H_-=3$, we have $q_{S_-}|_{H_-}=w\oplus z^2$ by Lemma~\ref{H_{pm}}~(2) and (4). 
From (\ref{wt{H_-}}), we have $q_{S_-}|_{\widetilde{H_-}}=w\oplus z^2 \text{ or } z^2$. 
For $\rank H_-=4$, we have $H_-=\frac{1}{2}S_-/S_-$. 
From (\ref{wt{H_-}}), a candidate for $q_{S_-}|_{\widetilde{H_-}}$ is one of $v\oplus z^2$, $w\oplus z^2$ and $z^2$. 
Here we claim that $q_{S_-}|_{\widetilde{H_-}}=z^2$ is impossible. 

Suppose that $q_{S_-}|_{\widetilde{H_-}}=z^2$. 
This yields $\widetilde{H_-}=\Gamma_-$. 
Let $H_-=\widetilde{H_-}\oplus G_{H_-}$, where $G_{H_-}$ is a subgroup of $H_-$ whose quadratic form is $v$. 
Moreover let
\begin{align*}
\tfrac{1}{2}K_+/K_+&=G_{K_+}\oplus p_{K_+}(\Gamma_{K_+S_+}), \\
\tfrac{1}{2}S_+/S_+&=G_{S_+}\oplus p_{S_+}(\Gamma_{K_+S_+}), 
\end{align*}
where $G_{K_+}$ (resp.~$G_{S_+}$) is a subgroup of $\frac{1}{2}K_+/K_+$ (resp.~$\frac{1}{2}S_+/S_+$) whose quadratic form is $u\oplus v$ (resp.~$v$). 
Since $\widetilde{H_-}=\Gamma_-$, we have 
\[\gamma_{H_-}(\widetilde{H_-})\equiv p_{S_+}(\Gamma_{K_+S_+})\equiv p_{K_+}(\Gamma_{K_+S_+}) \pmod{\Gamma_{K_+S_+}}. \]
It follows from (\ref{half}) that 
\[\gamma_{H_-}(G_{H_-})\subset G_{K_+}\oplus G_{S_+}. \]
Since $G_{H_-}$ gives difference between $\Gamma_{K_+S_-}$ and $\Gamma_{L_+S_-}$, 
a non-zero element of $\gamma_{H_-}(G_{H_-})$ is a sum of non-zero elements of $G_{K_+}$ and $G_{S_+}$. 
This contradicts the fact that the quadratic form of $G_{H_-}$ is $v$. 
Now we have Table~\ref{table1}. 

We proceed to calculate $K_{\pm}$. 
By Lemma~\ref{unique-emb}, $K_+(\frac{1}{2})$ is uniquely determined with the signature $(1, 9-\rank S_+)$ and the discriminant form $-q_{S_+(\frac{1}{2})}$. 
By calculating (\ref{k_{pm}}) we have $k_-$. 
From \cite[Theorem~1.14.2 and Corollary~1.9.4]{Nik2}, $K_-$ is uniquely determined with the signature $(2, 10-\rank S_-)$ and the discriminant form $k_-$. 
Therefore we have $k_-$ and $K_-$ in Table~\ref{table2}. 
\end{proof}


\section{Examples}\label{examples}

In this section we construct examples of involutions on Enriques surfaces. In particular 
we show that all cases in Theorem~\ref{MainThm} actually occur.
We denote by $\iota$ an involution on an Enriques surface $Y$. 
The $K3$-cover is denoted by $X$ with the covering transformation $\varepsilon$. The symplectic lift of $\iota$ to $X$ is denoted by $g$ 
and the other non-symplectic one is $\theta =g\circ \varepsilon =\varepsilon\circ g$.

We first note that the fixed locus of $\theta$, 
\[X^{\theta}=\{x\in X\mid \theta (x)=x\},\]
can be computed from Theorem~\ref{Our lists} via the following theorem.
\begin{theorem}[{\cite[Theorem~4.2.2]{Nik3}}]\label{NikThm422}
Let $\theta$ be a non-symplectic involution of $X$ and let $T=H^2(X, \mathbb{Z})^{\langle\theta^*\rangle}$. 
Since $T$ is $2$-elementary, the lattice $T$ is determined by invariants $(r, l, \delta)$ by Proposition $\ref{NikThm362}$. 
Then, the fixed locus $X^{\theta}$ has the following form. 
\[X^{\theta}=\begin{cases}
C^{(g)}+\sum_{i=1}^k E_i & \text{where } g=\frac{22-r-l}{2} \text{ and } k=\frac{r-l}{2} \\
C_1^{(1)}+C_2^{(1)} & \text{if } r=10,\ l=8,\ \delta=0 \\
\emptyset & \text{if } r=10,\ l=10,\ \delta=0
\end{cases}. \]
Here we denote by $C^{(g)}$ a non-singular curve of genus $g$ and by $E_i$ a non-singular rational curve. 
\end{theorem}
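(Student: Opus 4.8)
The plan is to analyze $X^{\theta}$ through the quotient surface $\overline{X}:=X/\theta$. First I would show that $X^{\theta}$ is a smooth, possibly empty, curve: at a fixed point $x$, the differential $d\theta_x$ is an involution of $T_xX$, and since $\theta$ is non-symplectic it acts on the line $\wedge^2 T_x^{*}X$ by $-1$, so the eigenvalues of $d\theta_x$ on $T_x^{*}X$ are $1$ and $-1$; hence the fixed locus is smooth of pure dimension $1$, with no isolated points. Consequently $\overline{X}$ is a smooth projective surface and $\pi\colon X\to \overline{X}$ is a double cover branched precisely along the smooth curve $B:=\pi(X^{\theta})$, which is isomorphic to $X^{\theta}$. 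Since $H^0(\overline{X},\Omega^2_{\overline{X}})=H^0(X,\Omega^2_X)^{\theta^{*}}=0$, $h^{1,0}(\overline{X})=h^{1,0}(X)^{\theta^{*}}=0$, and $\overline{X}$ is dominated by a $K3$ surface so that $\mathrm{kod}(\overline{X})\le 0$, the classification of surfaces shows that $\overline{X}$ is either a rational surface or an Enriques surface. The Enriques case occurs exactly when $\pi$ is unramified, that is $B=\emptyset$; then $T\cong U\oplus E_8(2)$, whose invariants are $(r,l,\delta)=(10,10,0)$, which is the third line of the formula.

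Assuming $B\ne\emptyset$, so that $\overline{X}$ is rational, I would extract the numerology. The double-cover formula $K_X=\pi^{*}(K_{\overline{X}}+\tfrac12 B)$ together with $K_X=0$ and the torsion-freeness of $\mathrm{Pic}(\overline{X})$ gives $B\in|-2K_{\overline{X}}|$. Since $\overline{X}$ is rational, $b_2(\overline{X})=10-K_{\overline{X}}^2$, and since $\pi^{*}\colon H^2(\overline{X},\mathbb{Q})\to H^2(X,\mathbb{Q})^{\theta^{*}}=T\otimes\mathbb{Q}$ is an isomorphism we obtain $r=10-K_{\overline{X}}^2$. The topological Lefschetz fixed point formula then gives
\[\chi_{\mathrm{top}}(X^{\theta})=2+\mathrm{tr}\bigl(\theta^{*}\mid H^2(X,\mathbb{Q})\bigr)=2+(2r-22)=2r-20,\]
and adjunction on $\overline{X}$ gives $p_a(B)=1+\tfrac12 B\cdot(B+K_{\overline{X}})=1+K_{\overline{X}}^2=11-r$. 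Writing $X^{\theta}=\bigsqcup_j C_j$ with $N$ connected components of genera $g(C_j)$, both identities reduce to the single relation $\sum_j g(C_j)-N=10-r$. (The holomorphic Lefschetz formula is vacuous here, each fixed curve contributing $0$.)

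It then remains to determine the exact shape of the configuration, which is the heart of the matter. A component $C_j$ is rational iff $C_j^2=-2$ (adjunction with $K_X=0$), and for such a component its image $\overline{C_j}$ is a smooth rational curve with $\pi^{*}\overline{C_j}=2C_j$. I would show that the integral classes $[C_j]=\tfrac12\pi^{*}\overline{C_j}$ of the $k$ rational components generate $T$ over the sublattice $\pi^{*}H^2(\overline{X},\mathbb{Z})\cong H^2(\overline{X},\mathbb{Z})(2)$, which has index $2^{(r-l)/2}$ in $T$ by comparing discriminants; this gives $k=(r-l)/2$. Together with the relation $[C]+\sum_i[E_i]=-\pi^{*}K_{\overline{X}}$ for any positive-genus component $C$ and the geometry of smooth anticanonical curves on rational surfaces, this forces at most one non-rational component once $r\ne 10$; combined with $\sum_j g(C_j)-N=10-r$ it yields $g=(22-r-l)/2$ for the genus of the unique big curve. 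Finally, when $r=10$ one has $K_{\overline{X}}^2=0$, so $\overline{X}$ carries a rational elliptic fibration with $-K_{\overline{X}}$ a fibre class and $|-2K_{\overline{X}}|=|2F|$; a smooth member is then a disjoint union of two elliptic fibres, which yields $C_1^{(1)}+C_2^{(1)}$ exactly when $(l,\delta)=(8,0)$, the empty configuration when $(l,\delta)=(10,0)$, and the generic shape otherwise.

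The step I expect to be the main obstacle is this last one: translating the condition that $B$ be a \emph{smooth} divisor in $|-2K_{\overline{X}}|$ on a rational surface into the tabulated list of curve configurations, and in particular matching the parity $\delta$ of $q_T$ to each case. A clean way to organize it is to compute the discriminant form $q_T$ directly from $\pi$ and the branch curve $B$, reading off $l$ and $\delta$ from the number and self-intersections of the components of $B$; this is essentially the route taken in \cite{Nik3}.
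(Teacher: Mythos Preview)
The paper does not supply a proof of this theorem at all: it is quoted verbatim as \cite[Theorem~4.2.2]{Nik3} and used as a black box to compute the fixed curves in Table~\ref{table2}. There is therefore nothing in the paper to compare your argument against.

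That said, your outline is the standard route to Nikulin's result and is largely sound through the numerology: the smoothness of $X^{\theta}$, the dichotomy rational/Enriques for $\overline{X}$, the identification $r=10-K_{\overline{X}}^2$, the Lefschetz count $\chi_{\mathrm{top}}(X^{\theta})=2r-20$, and the relation $\sum_j g(C_j)-N=10-r$ are all correct. The genuinely incomplete part is exactly the one you flag yourself. Your claim that the classes $\tfrac{1}{2}\pi^{*}[\overline{C_j}]$ of the rational branch components generate $T$ over $\pi^{*}H^2(\overline{X},\mathbb{Z})$ is not obvious and, as stated, is not quite right: what is true is that $T/\pi^{*}H^2(\overline{X},\mathbb{Z})$ is a $2$-elementary group of order $2^{(r-l)/2}$, and one must identify this quotient with something built from the branch components, which requires an argument (for instance via the Smith exact sequence or by directly analyzing which half-pullbacks are integral). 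Likewise, the assertion that there is at most one component of positive genus when $r\neq 10$ does not follow from the relation $\sum g_j-N=10-r$ alone; one needs an additional input, typically that a smooth member of $|-2K_{\overline{X}}|$ on a rational surface with $K_{\overline{X}}^2>0$ is connected (this is where the actual classification of the pairs $(\overline{X},B)$, or equivalently of log del Pezzo surfaces, enters). Finally, tying the parity $\delta$ to the three subcases at $r=10$ requires computing $q_T$ explicitly, which you have deferred. So the skeleton is right, but the steps you call ``the heart of the matter'' are precisely where the real work in \cite{Nik3} lies, and your sketch does not yet supply it.
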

\begin{proposition}
The invariant $(r,l,\delta )$ for each case is as in Table~$\ref{table2}$.
\end{proposition}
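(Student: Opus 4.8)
The plan is to recover the triple $(r,l,\delta)$ of the non-symplectic involution $\theta=g\circ\epsilon$ directly from the lattice data already assembled in Theorem~\ref{Our lists}. The invariant lattice of $\theta^*$ is $T=H^2(X,\mathbb{Z})^{\langle\theta^*\rangle}=(K_+\oplus S_-)^{\wedge}$, by the description in Lemma~\ref{all 2-elm} of the fixed part of $(g\circ\epsilon)^*$. Since $T$ is a hyperbolic $2$-elementary lattice, Proposition~\ref{NikThm362} tells us it is pinned down by $(r,l,\delta)$, so it suffices to compute these three numbers case by case from Table~\ref{table1} and Table~\ref{table2}.

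First I would compute $r=\operatorname{rank}T$. We have $\operatorname{rank}T=\operatorname{rank}K_++\operatorname{rank}S_-$; since $K_+\cong U(2)\oplus S_-$ by Corollary~\ref{K_+=U(2)+S_-}, this is $2+2\operatorname{rank}S_-$, which is immediate from column $S_-(\tfrac12)$ of Table~\ref{table1}. Next I would compute $l=l(A_T)$. Here the key identity is that $A_T$ is the discriminant group of the primitive closure $(K_+\oplus S_-)^{\wedge}$, whose discriminant form is $-k_-$ by the last paragraph of the construction in Section~\ref{examples}'s preceding section (the forms $-k_\pm$ are the discriminant forms of $(L_\mp\oplus S_\pm)^{\wedge}$); but $(K_+\oplus S_-)^{\wedge}$ is the orthogonal complement of $K_-$ in the unimodular $H^2(X,\mathbb{Z})$, hence $q_T=-q_{K_-}=-k_-$. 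So $l(A_T)=l(A_{K_-})$, which can be read off the column $K_-$ of Table~\ref{table2} (e.g.\ $U\oplus U(2)\oplus E_8(2)$ has $l=10$, etc.). Finally $\delta$ is the parity of $q_T=-q_{K_-}=-k_-$: it is $0$ exactly when $k_-$ takes only integral values on $A_{K_-}$, which is visible from the explicit forms listed for $k_-$ in Table~\ref{table2} (those built only from $u$ and $v$ give $\delta=0$, those involving $\langle\tfrac14\rangle$ give $\delta=1$).

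Carrying this out for all eighteen rows is a finite bookkeeping task: for each row read off $\operatorname{rank}S_-$ to get $r$, read off the genus symbol of $K_-$ to get $l$ and $\delta$, and check against the $(r,l,\delta)$ column. One should also sanity-check via Nikulin's existence/uniqueness constraints — e.g.\ $r-l\geq 0$, $r+l\leq 22$, and $l\equiv r \pmod 2$ where forced — and note that $\operatorname{sign}T=(1,r-1)$ is consistent with $T$ being hyperbolic, which it must be since $T$ contains $U(2)\subset K_+$.

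The main obstacle, such as it is, is purely computational: correctly matching each abstract discriminant form $k_-$ (and its parity) to the tabulated root-lattice description of $K_-$, and making sure the sign conventions ($q_T=-q_{K_-}$) are handled consistently so that $\delta$ comes out right. No genuinely new lattice-theoretic input beyond Theorem~\ref{Our lists}, Corollary~\ref{K_+=U(2)+S_-}, and Proposition~\ref{NikThm362} is needed; the content is the observation that $T=(K_+\oplus S_-)^{\wedge}$ has discriminant form $-k_-$, after which the three invariants are mechanical.
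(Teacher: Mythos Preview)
Your argument contains a genuine error that breaks the computation of both $l$ and $\delta$. You correctly identify $T=(K_+\oplus S_-)^{\wedge}$, but you then assert that this lattice is the orthogonal complement of $K_-$ in the unimodular $L=H^2(X,\mathbb{Z})$, and hence that $q_T=-q_{K_-}=-k_-$. This is false: the orthogonal complement of $K_-$ in $L$ is $(L_+\oplus S_-)^{\wedge}$, not $(K_+\oplus S_-)^{\wedge}$. The two differ exactly by the presence of $S_+$ (recall $K_+\oplus S_+$ has finite index in $L_+$), and indeed the paper's cited statement says that $-k_-$ is the discriminant form of $(L_+\oplus S_-)^{\wedge}$, not of $T$. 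In fact $T^{\perp}_L=(K_-\oplus S_+)^{\wedge}$, so $q_T\cong -q_{(K_-\oplus S_+)^{\wedge}}$, which depends on how $K_-$ and $S_+$ are glued, not on $K_-$ alone.

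A concrete counterexample: in cases [10] and [11] the lattice $K_-$ is the same ($U\oplus U(2)\oplus D_4(2)$, with $l(A_{K_-})=6$), yet the table gives $l=6$ and $l=8$ respectively. Likewise in case [18] one has $S_-=0$ and $T=K_+\cong U(2)$ with $l=2$, whereas your formula would give $l(A_{K_-})=l(A_{U\oplus U(2)\oplus E_8(2)})=10$. So $l$ cannot be read off from $K_-$; the extra input is precisely the gluing datum $\widetilde{H_-}=p_{S_-}(\Gamma_{K_+S_-})$. The paper computes $l$ from the index formula $|\widetilde{H_-}|^2=\det(K_+\oplus S_-)/\det T=\det(K_+\oplus S_-)/2^l$, and then determines $\delta$ case by case: in many cases $(r,l)$ already forces $\delta$; in others one argues directly with the parity of $K_+\oplus S_-$ or of $T^{\perp}$; and for the rank-$4$ cases ($S_\pm=A_1(2)^4$ or $D_4(2)$) separate lemmas analyse the explicit gluing to pin down $\delta$. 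Your computation of $r$ is fine, but for $l$ and $\delta$ you need to bring $\widetilde{H_-}$ back into the picture.
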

\begin{proof}
We see that $T=H^2(X, \mathbb{Z})^{\langle\theta^*\rangle}$ is exactly the sublattice 
$(K_+\oplus S_-)^{\wedge}=((K_+\oplus S_-)\otimes \mathbb{Q})\cap L$ 
of $L=H^2(X, \mathbb{Z})$. Therefore we get $r=\rank K_+ + \rank S_-$. 

Since $T$ is $2$-elementary, we have $\det T=2^l$. 
By $p_{S_-}(\Gamma_{K_+S_-})=\widetilde{H_-}$, it follows that 
\[|\widetilde{H_-}|=|\Gamma_{K_+S_-}|=\sqrt{\frac{\det(K_+\oplus S_-)}{\det(K_+\oplus S_-)^{\wedge}}}=\sqrt{\frac{\det(K_+\oplus S_-)}{2^l}}. \]
From this equation we get $l$. 

Next we compute the invariant $\delta$. In cases No.~[4], [5], [8], [9], [15]--[17], the invariants $(r,l)$ already determine
$\delta$ uniquely by the existence condition for the $2$-elementary hyperbolic lattices, see \cite{Nik3}. 
In cases No.~[1]--[3], [18], we have that the parity of $K_+\oplus S_-$ is zero, hence the overlattice $T$ has parity zero, too. 
In No.~[6], we see from Table \ref{table1} that the length of $\widetilde{H_-}$ is $6$, which equals the rank of $S_-$. 
By straightforward computations, we see that the discriminant group of $T$ has elements of non-integer square, that is, 
we have $\delta=1$ in this case.
In No.~[7], we see that $T^{\perp}$ has rank $8$, signature $(2,6)$ and length $8$.
Therefore  $T^{\perp}(\frac{1}{2})$ is an integral unimodular lattice, which must be odd by the signature reason.
We get $T^{\perp}\simeq A_1(-1)^2\oplus A_1^6$ and so $\delta=1$. 

The remaining five cases where $\rank S_+=\rank S_-=4$ are treated by the next two lemmas.
\begin{lemma}\label{temp}
Assume that $S_{\pm}=A_1(2)^4$ and $(r, l)=(10, 10)$. 
Then $T=U(2)\oplus A_1^8$ and $\delta =1$. 
\end{lemma}
\begin{proof}
Let $K_+=U(2)\oplus A_1(2)^4=U(2)\oplus \langle e_1\rangle \oplus \cdots \oplus \langle e_4\rangle$ 
where $e_i$ are generators of $A_1(2)$ respectively. 
Similarly let 
\begin{align*}
S_+=A_1(2)^4=\langle e_1'\rangle \oplus \cdots \oplus \langle e_4'\rangle, \\
S_-=A_1(2)^4=\langle e_1''\rangle \oplus \cdots \oplus \langle e_4''\rangle. 
\end{align*}
By $p_{S_-}(\Gamma_{S_+S_-})=\Gamma_-=\langle e_1''/2\rangle \oplus \cdots \oplus \langle e_4''/2\rangle$, 
elements of norm $1\pmod{2}$ in $\Gamma_-$ is of the form either $e_i''/2$ or $(e_j''+e_k''+e_l'')/2$. 
Hence $\gamma\colon \Gamma_+\to \Gamma_-$ maps $e_i'/2$ to either $e_j''/2$ or $(e_j''+e_k''+e_l'')/2$. 
In the former case, it contradicts the fact that $S=E_8(2)$ does not contain $(-2)$-vector. 
Similarly the patching $p_{S_+}(\Gamma_{K_+S_+})\to p_{K_+}(\Gamma_{K_+S_+})$ maps $e_i'/2$ to $(e_j+e_k+e_l)/2$. 
Hence $\Gamma_{K_+S_-}$ contains an element of the form of 
\[\frac{e_i+e_j+e_k+e_l''+e_m''+e_n''}{2}. \]
This element has norm $(-6)$. 
Assumption $(r, l)=(10, 10)$ yields that $T(\frac{1}{2})=U\oplus E_8$ or $U\oplus\langle -1 \rangle^8$. 
Since $U\oplus E_8$ does not contain $(-3)$-vector, we conclude $T=U(2)\oplus A_1^8$. 
\end{proof}
\begin{lemma}\label{ParityOfT}
Assume that $S_{\pm}=D_4(2)$. 
Then the parity $\delta$ of $T=(K_+\oplus S_-)^{\wedge}$ is equal to $0$. 
\end{lemma}
\begin{proof}
By Corollary~\ref{K_+=U(2)+S_-}, we see that $K_+=U(2)\oplus D_4(2)$. 
Let $q_{K_+}=u\oplus v\oplus v(4)=u\oplus \langle e_1, f_1\rangle \oplus \langle g_1, h_1\rangle$ 
where $\langle e_1, f_1\rangle$ and $\langle g_1, h_1\rangle$ are generators of $v$ and $v(4)$ respectively. 
Similarly, let 
\begin{align*}
q_{S_+}&=v\oplus v(4)=\langle e_2, f_2\rangle \oplus \langle g_2, h_2\rangle, \\
q_{S_-}&=v\oplus v(4)=\langle e_3, f_3\rangle \oplus \langle g_3, h_3\rangle. 
\end{align*}
Recall that $L_+=U(2)\oplus E_8(2)$ and $S=E_8(2)$. 
We see that $\Gamma_{K_+S_+}=\langle 2g_1+2g_2, 2h_1+2h_2\rangle$ and $\Gamma_{S_+S_-}=\langle 2g_2+2g_3, 2h_2+2h_3\rangle$. 
Hence $\Gamma_{K_+S_-}$ contains $\langle 2g_1+2g_3, 2h_1+2h_3\rangle$. 
This shows that $T$ is an overlattice of $U(2)\oplus E_8(2)$. 
Therefore the parity of $T$ is equal to $0$. 
\end{proof}
This completes the proofs for all cases.
\end{proof}

\subsection{Horikawa constructions} 

The general construction is as follows.
\begin{proposition}[{\cite[V.~23]{BHPV}}]
Let $\psi$ be an involution on $\mathbb{P}^1\times \mathbb{P}^1$ given by $\psi\colon (u, v) \mapsto (-u, -v)$ 
where $u$ and $v$ are inhomogeneous coordinates of $\mathbb{P}^1$ respectively. 
Let $B$ be a curve on $\mathbb{P}^1\times \mathbb{P}^1$ whose bidegree is $(4, 4)$ with at worst simple singularities and 
preserved under $\psi$. 
Assume that $B$ does not pass through any of fixed points of $\psi$. 
Then the minimal resolution $X$ of the double cover of $\mathbb{P}^1\times \mathbb{P}^1$ whose branch locus is $B$
is a $K3$ surface. 
Moreover, $\psi$ lifts to two involutions of $X$. 
One of them is a fixed point free involution $\varepsilon$. 
In particular, $Y=X/\varepsilon$ is an Enriques surface. 
\end{proposition}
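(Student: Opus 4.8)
The plan is to verify that the construction in \cite[V.~23]{BHPV} does what is claimed, proceeding in three stages: (i) the double cover is a $K3$ surface, (ii) the covering involution $\psi$ lifts to two involutions of $X$, and (iii) one of these lifts acts freely, so the quotient is an Enriques surface. For (i), I would compute the canonical bundle of the double cover $\bar{X}\to \mathbb{P}^1\times \mathbb{P}^1$ branched along $B\in |{-2K_{\mathbb{P}^1\times\mathbb{P}^1}}|$, obtaining $K_{\bar X}=0$; since $B$ has only simple (ADE) singularities, the minimal resolution $X$ is a smooth surface with $K_X=0$ and, by the standard computation of $b_1$ of a double cover (the irregularity vanishes because $\mathbb{P}^1\times\mathbb{P}^1$ is simply connected and rational), $X$ is a $K3$ surface. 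This is a routine application of the formula for invariants of cyclic covers together with the fact that simple singularities do not affect $K$ after resolution.

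For (ii), the key point is that $\psi$ preserves $B$ by hypothesis, hence preserves the branch data, so $\psi$ lifts to an automorphism of $\bar X$ commuting with the deck transformation $\sigma$ of the double cover; composing with $\sigma$ gives a second lift. Because the two lifts differ by $\sigma$, and $\sigma$ is an involution, each lift is itself an involution (here one uses that the group generated by the lifts and $\sigma$ is a central extension of $\langle\psi\rangle$ by $\langle\sigma\rangle$ which, for an order-two base automorphism preserving the branch locus, splits). Both lifts extend to the minimal resolution $X$ since the resolution is canonical and hence functorial for automorphisms. I would then check which lift is fixed-point free.

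For (iii), I would use the hypothesis that $B$ avoids the four fixed points of $\psi$ on $\mathbb{P}^1\times\mathbb{P}^1$. Over each such fixed point $p$, the fibre of $\bar X\to\mathbb{P}^1\times\mathbb{P}^1$ consists of two points (since $p\notin B$), interchanged by $\sigma$; the two lifts of $\psi$ act on this pair, one fixing both points and one swapping them. Choosing $\varepsilon$ to be the lift that swaps the two points over every fixed point of $\psi$, I claim $\varepsilon$ has no fixed points at all: away from the preimage of $\mathrm{Fix}(\psi)$ the map $\bar X\to\mathbb{P}^1\times\mathbb{P}^1$ is a genuine double cover and a fixed point of $\varepsilon$ would map to a fixed point of $\psi$, contradiction; over $\mathrm{Fix}(\psi)$ we arranged that $\varepsilon$ swaps the sheets. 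Since $B$ misses $\mathrm{Fix}(\psi)$, the resolution introduces no exceptional curves there, so $\varepsilon$ descends to a fixed-point-free involution of $X$. Then $Y=X/\varepsilon$ is smooth, has $K_Y^{\otimes 2}$ trivial but $K_Y$ nontrivial (since $\varepsilon$ is free and acts nontrivially on $H^0(X,K_X)=H^0(X,\Omega^2_X)$, forcing $Y$ to be an Enriques surface rather than a $K3$), and has vanishing geometric genus and irregularity by a transfer/invariants argument.

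The main obstacle I expect is the bookkeeping in step (ii)–(iii): pinning down precisely that the \emph{same} lift $\varepsilon$ that swaps sheets over one fixed point of $\psi$ also swaps sheets over the others (a priori one could worry the behaviour differs from point to point), and that this lift is genuinely free rather than merely free over $\mathrm{Fix}(\psi)$. This is handled by noting $\varepsilon$ and $\varepsilon\circ\sigma$ are the only two lifts, that $\sigma$ acts on $H^0(X,\Omega^2_X)$ by $-1$ while $\psi$ also acts by $-1$ (as $\psi$ reverses the orientation-giving form $du\wedge dv/\!\sqrt{B}$), so the \emph{unique} lift acting by $+1$ on $\Omega^2_X$ is symplectic and has a fixed locus, whence the other lift $\varepsilon$, acting by $-1$, is the fixed-point-free one — a global argument that settles the sheet-swapping consistently at all fixed points simultaneously. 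The remaining verifications (Euler characteristic, $K_Y$) are then immediate.
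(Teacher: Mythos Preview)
The paper does not give its own proof of this proposition: it is quoted as a known construction from \cite[V.~23]{BHPV} and used as a black box to produce the examples in Section~\ref{examples}. Your outline reconstructs the standard argument from that reference and is essentially correct.

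One step is garbled. You write that ``$\sigma$ acts on $H^0(X,\Omega^2_X)$ by $-1$ while $\psi$ also acts by $-1$,'' but $\psi$ lives on the base, and in any case $\psi^*(du\wedge dv)=d(-u)\wedge d(-v)=du\wedge dv$. The correct computation is in the local model $w^2=f(u,v)$: the lift $(u,v,w)\mapsto(-u,-v,w)$ sends $\omega=du\wedge dv/w$ to itself (this is the symplectic lift $g$) and fixes both sheets over a fixed point of $\psi$, while $(u,v,w)\mapsto(-u,-v,-w)$ sends $\omega$ to $-\omega$ and swaps the sheets. Because the action on the one-dimensional space $H^0(X,\Omega^2_X)$ is a global sign, this local dichotomy is the same at all four fixed points of $\psi$---which is exactly the consistency you were worried about. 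With this correction your argument goes through: the non-symplectic lift $\varepsilon$ swaps sheets over every fixed point of $\psi$ and is therefore free on $\bar X$, and since $\mathrm{Sing}(B)\subset B$ is disjoint from $\mathrm{Fix}(\psi)$, the exceptional curves of the resolution come in $\varepsilon$-exchanged pairs, so $\varepsilon$ remains free on $X$.
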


In this construction, the other lift of $\psi$ gives a symplectic involution $g$ on $X$ and induces an involution $\iota$ on $Y$
(namely the construction always associates an involution on $Y$). 
The covering involution $\theta$ of $X/\mathbb{P}^1\times \mathbb{P}^1$ is the same as $\varepsilon\circ g$, which is a non-symplectic involution of $X$. In what follows, we exhibit many choices of branch $B$ so that 
the resulting $\iota$ covers all involutions in Theorem \ref{MainThm} except for No.~[13].
We remark that, the condition for $B$ to have the expected number of components, types of singularities and 
not to pass through the fixed points of $\psi$ is Zariski open, so that we will always assume that the 
coefficients (parameters) of the exhibited equation of $B$ are general enough to satisfy these conditions.



\medskip
\noindent
\textbf{Example No.~[1].}\quad 
This example was constructed by Horikawa~\cite{Hor}, and studied by Dolgachev~\cite{Dol} and Barth-Peters~\cite{BP}. 
Here we give another construction given by Mukai-Namikawa~\cite{MN}. 

Consider the following curves on $\mathbb{P}^1\times\mathbb{P}^1$ (Figure~\ref{1-1}); 
\begin{align*}
&X_{\pm}\colon u=\pm 1,\ Y_{\pm}\colon v=\pm 1, \\
&E\colon u^2v^2-1+a_1(u^2-1)+a_2(v^2-1)=0 \pod{a_i\in\mathbb{C}}. 
\end{align*}


\begin{figure}[htbp]
 \begin{minipage}{0.45\hsize}
  \begin{center}
   \includegraphics[width=40mm]{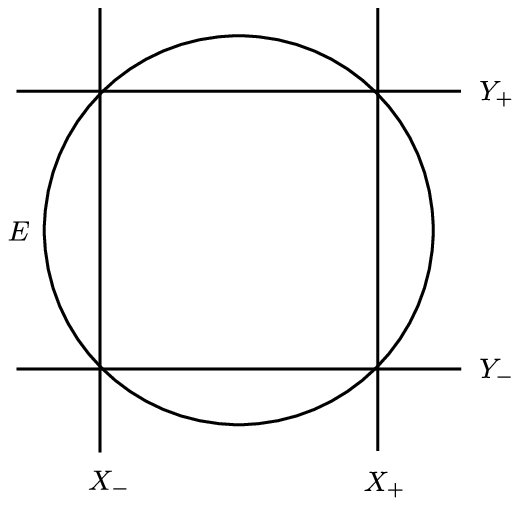}
  \end{center}
  \caption{}
  \label{1-1}
 \end{minipage}
 \begin{minipage}{0.45\hsize}
  \begin{center}
   \includegraphics[width=40mm]{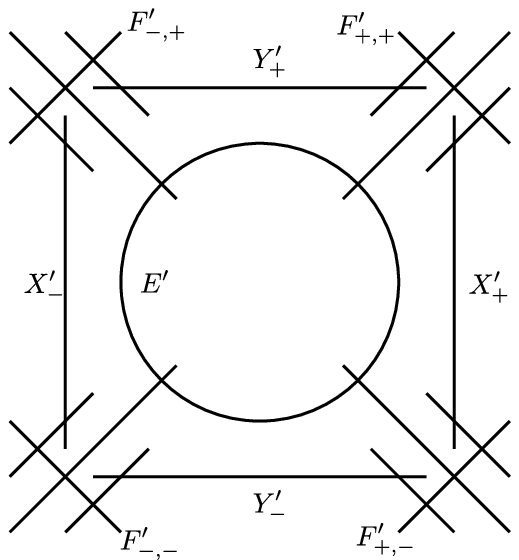}
  \end{center}
  \caption{}
  \label{1-2}
 \end{minipage}
\end{figure}

Blow up $\mathbb{P}^1\times\mathbb{P}^1$ at $4$ intersection points of $X_{\pm}$, $Y_{\pm}$ and $E$. 
Let $F_{\pm, \pm}$ be the exceptional curves over $(\pm 1, \pm 1)$ respectively. 
Blow up again at $12$ intersection points of $F_{\pm, \pm}$ and the strict transforms of $X_{\pm}$, $Y_{\pm}$ and $E$. 
Let $R$ be the blown up surface. 
We denote by $X_{\pm}'$, $Y_{\pm}'$, $F_{\pm, \pm}'$ and $E'$ the strict transforms of $X_{\pm}$, $Y_{\pm}$, $F_{\pm, \pm}$ and $E$ respectively. 
The configuration of curves in $R$ is given in Figure~\ref{1-2}. 
Note that $X_{\pm}'$, $Y_{\pm}'$ and $F_{\pm, \pm}'$ are all $(-4)$-curves, and other rational curves are all $(-1)$-curves. 
Let $B'=\sum (X_{\pm}'+Y_{\pm}'+F_{\pm, \pm}')+E'$. 
The $K3$ surface $X$ is the double cover of $R$ whose branch locus is $B'$. 
Since $X^{\theta}=B'$ consists of one elliptic curve and $8$ rational curves, we see $(r, l)=(18, 2)$, by Theorem~\ref{NikThm422}. 
This is enough to conclude that this example belongs to No.~[1] by Table~\ref{table2}.

\medskip
\noindent
\textbf{Example No.~[2].}\quad  
This example was found by Kondo, and overlooked in \cite{MN} (cf.~\cite{Muk1}). 

Consider the following curves on $\mathbb{P}^1\times\mathbb{P}^1$ (Figure~\ref{2-1}); 
\begin{align*}
&X_{\pm}\colon u=\pm 1,\ Y_{\pm}\colon v=\pm 1, \\
&C_{\pm}\colon uv-1+a_1(\pm u-1)+a_2(\pm v-1)=0 \pod{a_i\in\mathbb{C}}. 
\end{align*}

\begin{figure}[htbp]
 \begin{minipage}{0.45\hsize}
  \begin{center}
   \includegraphics[width=40mm]{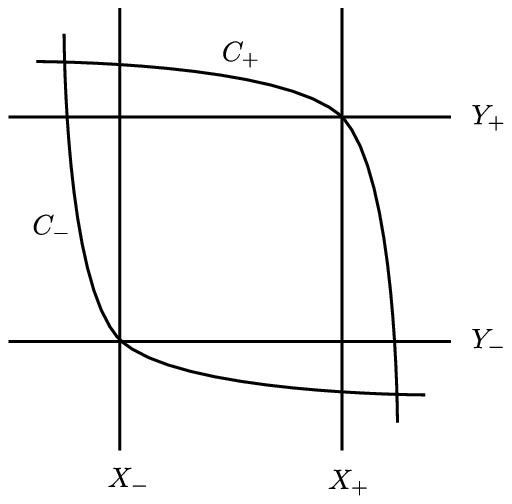}
  \end{center}
  \caption{}
  \label{2-1}
 \end{minipage}
 \begin{minipage}{0.45\hsize}
  \begin{center}
   \includegraphics[width=40mm]{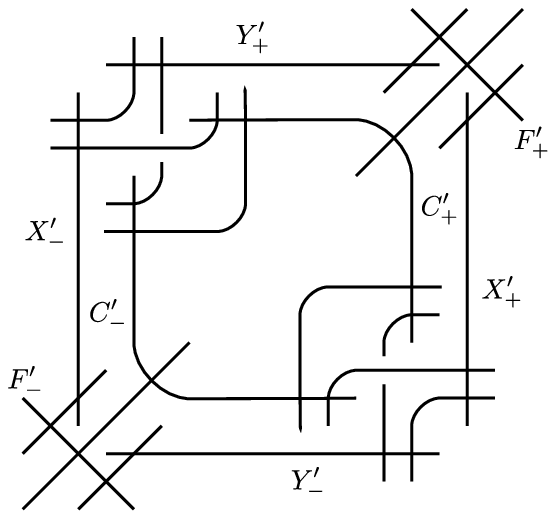}
  \end{center}
  \caption{}
  \label{2-2}
 \end{minipage}
\end{figure}

\begin{figure}[htbp]
  \begin{center}
   \includegraphics[width=40mm]{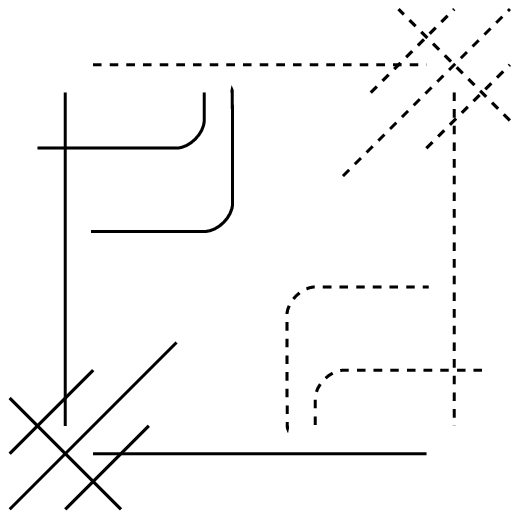}
  \end{center}
  \caption{}
  \label{2-3}
\end{figure}

Blow up $\mathbb{P}^1\times\mathbb{P}^1$ at $10$ intersection points of $X_{\pm}$, $Y_{\pm}$ and $C_{\pm}$. 
Let $F_+$ and $F_-$ be the exceptional curves over $(1, 1)$ and $(-1, -1)$ respectively. 
Blow up again at $6$ intersection points of $F_{\pm}$ and the strict transforms of $X_{\pm}$, $Y_{\pm}$ and $C_{\pm}$. 
Let $R$ be the blown up surface. 
We denote by $X_{\pm}'$, $Y_{\pm}'$, $C_{\pm}'$ and $F_{\pm}'$ the strict transforms of $X_{\pm}$, $Y_{\pm}$, $C_{\pm}$ and $F_{\pm}$ respectively. 
The configuration of curves in $R$ is given in Figure~\ref{2-2}. 
Note that $X_{\pm}'$, $Y_{\pm}'$, $C_{\pm}'$ and $F_{\pm}'$ are all $(-4)$-curves, and the others are all $(-1)$-curves. 
Let $B'=\sum (X_{\pm}'+Y_{\pm}'+C_{\pm}'+F_{\pm}')$. 
The $K3$ surface $X$ is the double cover of $R$ whose branch locus is $B'$. 
Since $X^{\theta}=B'$ consists of $8$ rational curves, we see $(r, l)=(18, 4)$, by Theorem~\ref{NikThm422}. 
Note that the configuration of curves in $X$ is given in the same as Figure~\ref{2-2}. 
We notice that there exists $E_7\oplus A_1$ diagram in Figure~\ref{2-2} (continuous lines in Figure~\ref{2-3}). 
Let $e_i \pod{i=1,\ldots, 8}$ denote the cohomology class of these curves respectively. 
The image of this diagram by $\epsilon$ is given by dashed lines in Figure~\ref{2-3}. 
Let $M$ be the lattice generated by $e_i-\epsilon^*(e_i) \pod{i=1,\ldots, 8}$. 
We see that $M\cong E_7(2)\oplus A_1(2)$ and $M\subset S_-$. 
For $(e_i-\epsilon^*(e_i))/2\in\frac{1}{2}M$, there exists $(e_i+\epsilon^*(e_i))/2\in L_+^*$ such that 
\[\frac{e_i-\epsilon^*(e_i)}{2}+\frac{e_i+\epsilon^*(e_i)}{2}=e_i\in L. \]
It follows that 
\[\tfrac{1}{2}M/S_-\subset H_-. \]
By calculation, we have $q_{E_8(2)}|_{\frac{1}{2}(E_7(2)\oplus A_1(2))/E_8(2)}=u^3\oplus w$. 
Therefore this is the example of No.~[2].

\medskip
\noindent
\textbf{Example No.~[3].}\quad 
This example was constructed by Lieberman. 

Consider the following curves on $\mathbb{P}^1\times\mathbb{P}^1$ (Figure~\ref{3-1}); 
\begin{align*}
&X_{1\pm}\colon u=\pm 1,\ Y_{1\pm}\colon v=\pm 1, \\
&X_{2\pm}\colon u=\pm a_1,\ Y_{2\pm}\colon v=\pm a_2 \pod{a_i\in\mathbb{C}}. 
\end{align*}

\begin{figure}[htbp]
 \begin{minipage}{0.45\hsize}
  \begin{center}
   \includegraphics[width=40mm]{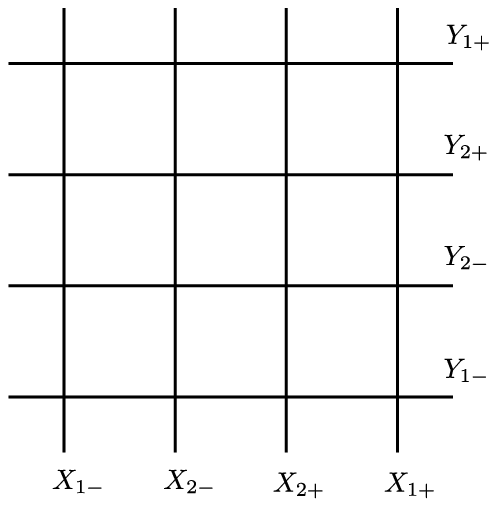}
  \end{center}
  \caption{}
  \label{3-1}
 \end{minipage}
 \begin{minipage}{0.45\hsize}
  \begin{center}
   \includegraphics[width=40mm]{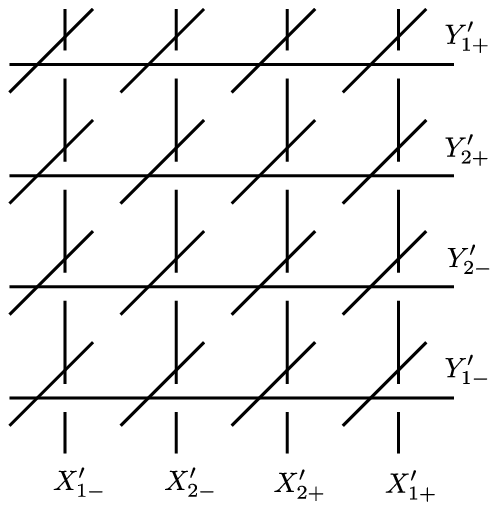}
  \end{center}
  \caption{}
  \label{3-2}
 \end{minipage}
\end{figure}

\begin{figure}[htbp]
  \begin{center}
   \includegraphics[width=40mm]{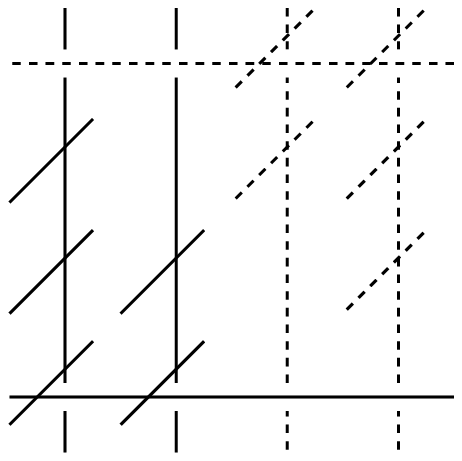}
  \end{center}
  \caption{}
  \label{3-3}
\end{figure}

Blow up $\mathbb{P}^1\times\mathbb{P}^1$ at $16$ intersection points of $X_{1\pm}$, $X_{2\pm}$, $Y_{1\pm}$ and $Y_{2\pm}$. 
Let $R$ be the blown up surface. 
We denote by $X_{1\pm}'$, $X_{2\pm}'$, $Y_{1\pm}'$ and $Y_{2\pm}'$ the strict transforms of $X_{1\pm}$, $X_{2\pm}$, $Y_{1\pm}$ and $Y_{2\pm}$ respectively. 
The configuration of curves in $R$ is given in Figure~\ref{3-2}. 
Note that $X_{1\pm}'$, $X_{2\pm}'$, $Y_{1\pm}'$ and $Y_{2\pm}'$ are all $(-4)$-curves, and the others are all $(-1)$-curves. 
Let $B'=\sum (X_{1\pm}'+X_{2\pm}'+Y_{1\pm}'+Y_{2\pm}')$. 
The $K3$ surface $X$ is the double cover of $R$ whose branch locus is $B'$. 
Since $X^{\theta}=B'$ consists of $8$ rational curves, we see $(r, l)=(18, 4)$, by Theorem~\ref{NikThm422}. 
Note that the configuration of curves in $X$ is given in the same as Figure~\ref{3-2}. 
We notice that there exists $D_8$ diagram in Figure~\ref{3-2} (continuous lines in Figure~\ref{3-3}). 
Let $e_i \pod{i=1,\ldots, 8}$ denote the cohomology class of these curves respectively. 
The image of this diagram by $\epsilon$ is given by dashed lines in Figure~\ref{3-3}. 
Let $M$ be the lattice generated by $e_i-\epsilon^*(e_i) \pod{i=1,\ldots, 8}$. 
We see that $M\cong D_8(2)$ and $M\subset S_-$. 
Similarly to the Example~No.~[2], we have $\tfrac{1}{2}M/S_-\subset H_-$. 
By calculation, we have $q_{E_8(2)}|_{\frac{1}{2}(D_8(2))/E_8(2)}=u^3\oplus z$. 
Therefore this is the example of No.~[3].

\medskip
\noindent
\textbf{Example No.~[4].}\quad 
Consider the following curves on $\mathbb{P}^1\times\mathbb{P}^1$ (Figure~\ref{4-1}); 
\begin{align*}
&X_{\pm}\colon u=\pm 1,\ Y_{\pm}\colon v=\pm 1, \\
&E\colon u^2v^2-1+a_1(u^2-1)+a_2(v^2-1)+a_3(uv-1)=0 \pod{a_i\in\mathbb{C}}. 
\end{align*}

\begin{figure}[htbp]
 \begin{minipage}{0.45\hsize}
  \begin{center}
   \includegraphics[width=40mm]{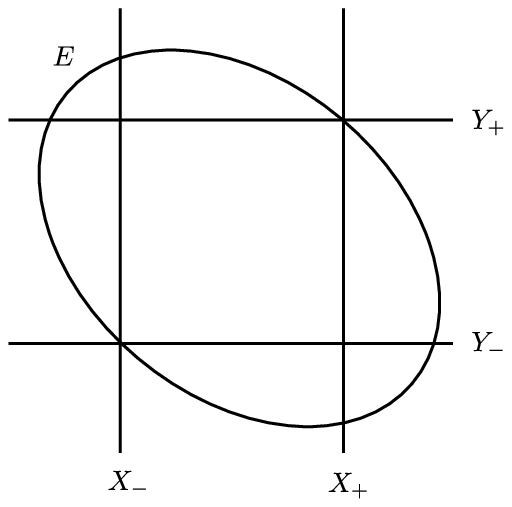}
  \end{center}
  \caption{}
  \label{4-1}
 \end{minipage}
 \begin{minipage}{0.45\hsize}
  \begin{center}
   \includegraphics[width=40mm]{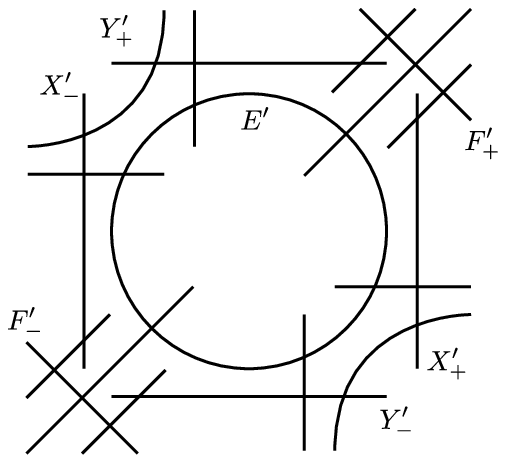}
  \end{center}
  \caption{}
  \label{4-2}
 \end{minipage}
\end{figure}

Blow up $\mathbb{P}^1\times\mathbb{P}^1$ at $8$ intersection points of $X_{\pm}$, $Y_{\pm}$ and $E$. 
Let $F_+$ and $F_-$ be the exceptional curves over $(1, 1)$ and $(-1, -1)$ respectively. 
Blow up again at $6$ intersection points of $F_{\pm}$ and the strict transforms of $X_{\pm}$, $Y_{\pm}$ and $E$. 
Let $R$ be the blown up surface. 
We denote by $X_{\pm}'$, $Y_{\pm}'$, $F_{\pm}'$ and $E'$ the strict transforms of $X_{\pm}$, $Y_{\pm}$, $F_{\pm}$ and $E$ respectively. 
The configuration of curves in $R$ is given in Figure~\ref{4-2}. 
Note that $X_{\pm}'$, $Y_{\pm}'$ and $F_{\pm}'$ are all $(-4)$-curves, and other rational curves are all $(-1)$-curves. 
Let $B'=\sum (X_{\pm}'+Y_{\pm}'+F_{\pm}')+E'$. 
The $K3$ surface $X$ is the double cover of $R$ whose branch locus is $B'$. 
Since $X^{\theta}=B'$ consists of one elliptic curve and $6$ rational curves, we see $(r, l)=(16, 4)$, by Theorem~\ref{NikThm422}. 
Therefore this is the example of No.~[4]. 

\medskip
\noindent
\textbf{Example No.~[5].}\quad 
This example was studied by Mukai~\cite{Muk2} as the example of numerically reflective involution. 

Consider the following curves on $\mathbb{P}^1\times\mathbb{P}^1$ (Figure~\ref{5-1}); 
\begin{align*}
&X_{\pm}\colon u=\pm 1,\ Y_{\pm}\colon v=\pm 1, \\
&C_{\pm}\colon uv\pm a_1u\pm a_2v+a_3=0 \pod{a_i\in\mathbb{C}}. 
\end{align*}

\begin{figure}[htbp]
 \begin{minipage}{0.45\hsize}
  \begin{center}
   \includegraphics[width=40mm]{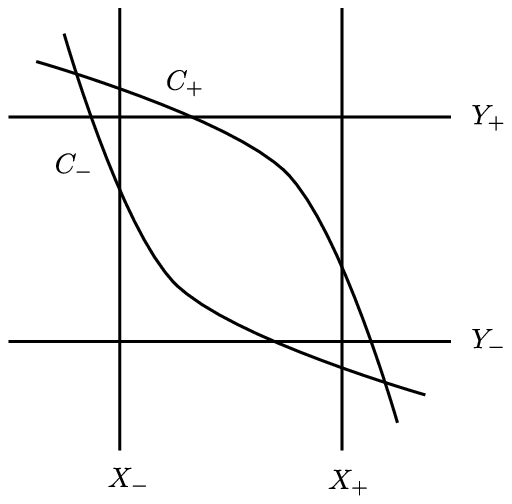}
  \end{center}
  \caption{}
  \label{5-1}
 \end{minipage}
 \begin{minipage}{0.45\hsize}
  \begin{center}
   \includegraphics[width=40mm]{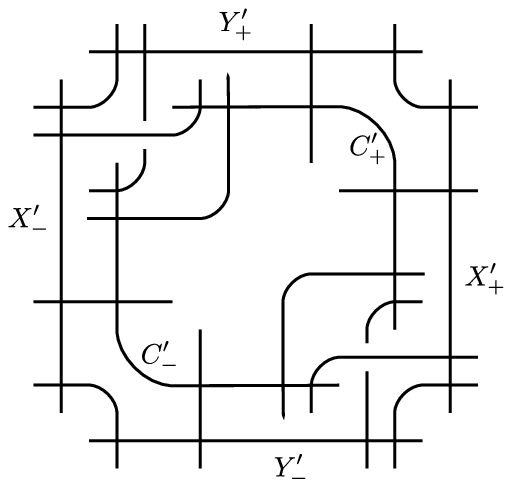}
  \end{center}
  \caption{}
  \label{5-2}
 \end{minipage}
\end{figure}

Blow up $\mathbb{P}^1\times\mathbb{P}^1$ at $14$ intersection points of $X_{\pm}$, $Y_{\pm}$ and $C_{\pm}$. 
Let $R$ be the blown up surface. 
We denote by $X_{\pm}'$, $Y_{\pm}'$ and $C_{\pm}'$ the strict transforms of $X_{\pm}$, $Y_{\pm}$ and $C_{\pm}$ respectively. 
The configuration of curves in $R$ is given in Figure~\ref{5-2}. 
Note that $X_{\pm}'$, $Y_{\pm}'$ and $C_{\pm}'$ are all $(-4)$-curves and the others are all $(-1)$-curves. 
Let $B'=\sum (X_{\pm}'+Y_{\pm}'+C_{\pm}')$. 
The $K3$ surface $X$ is the double cover of $R$ whose branch locus is $B'$. 
Since $X^{\theta}=B'$ consists of $6$ rational curves, we see $(r, l)=(16, 6)$, by Theorem~\ref{NikThm422}. 
Therefore this is the example of No.~[5].

\medskip
\noindent
\textbf{Example No.~[6].}\quad 
Consider the following curves on $\mathbb{P}^1\times\mathbb{P}^1$ (Figure~\ref{6-1}); 
\begin{align*}
&X_{\pm}\colon u=\pm 1,\ Y_{\pm}\colon v=\pm 1, \\
&E\colon u^2v^2+a_1u^2+a_2v^2+a_3uv+a_4=0 \pod{a_i\in\mathbb{C}}. 
\end{align*}

\begin{figure}[htbp]
 \begin{minipage}{0.45\hsize}
  \begin{center}
   \includegraphics[width=40mm]{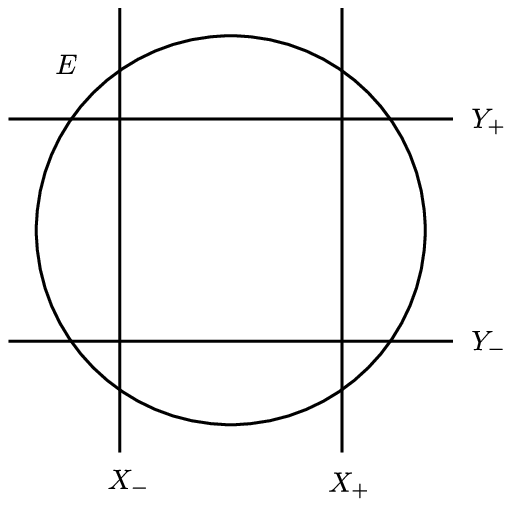}
  \end{center}
  \caption{}
  \label{6-1}
 \end{minipage}
 \begin{minipage}{0.45\hsize}
  \begin{center}
   \includegraphics[width=40mm]{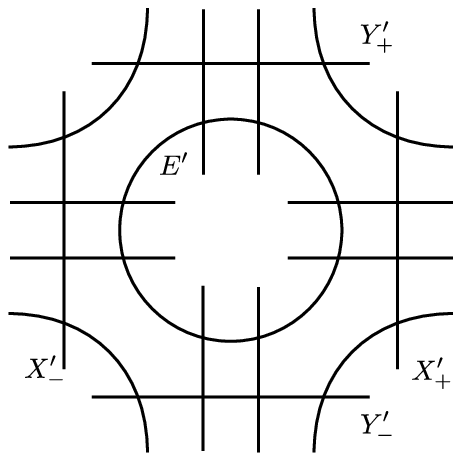}
  \end{center}
  \caption{}
  \label{6-2}
 \end{minipage}
\end{figure}

Blow up $\mathbb{P}^1\times\mathbb{P}^1$ at $12$ intersection points of $X_{\pm}$, $Y_{\pm}$ and $E$. 
Let $R$ be the blown up surface. 
We denote by $X_{\pm}'$, $Y_{\pm}'$ and $E'$ the strict transforms of $X_{\pm}$, $Y_{\pm}$ and $E$ respectively. 
The configuration of curves in $R$ is given in Figure~\ref{6-2}. 
Note that $X_{\pm}'$, $Y_{\pm}'$ are all $(-4)$-curves and other rational curves are all $(-1)$-curves. 
Let $B'=\sum (X_{\pm}'+Y_{\pm}')+E'$. 
The $K3$ surface $X$ is the double cover of $R$ whose branch locus is $B'$. 
Since $X^{\theta}=B'$ consists of one elliptic curve and $4$ rational curves, we see $(r, l)=(14, 6)$, by Theorem~\ref{NikThm422}. 
Therefore this is the example of No.~[6].

\medskip
\noindent
\textbf{Example No.~[7].}\quad 
Consider the following curves on $\mathbb{P}^1\times\mathbb{P}^1$ (Figure~\ref{7-1}); 
\[Y_{\pm}\colon v=\pm 1,\ C_{\pm}\colon u^2v\pm uv\pm a_1u^2+a_2u+a_3v\pm a_4=0 \pod{a_i\in\mathbb{C}}. \]

\begin{figure}[htbp]
 \begin{minipage}{0.45\hsize}
  \begin{center}
   \includegraphics[width=40mm]{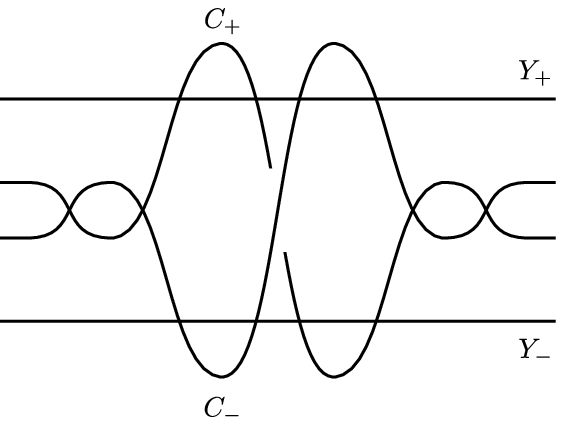}
  \end{center}
  \caption{}
  \label{7-1}
 \end{minipage}
 \begin{minipage}{0.45\hsize}
  \begin{center}
   \includegraphics[width=40mm]{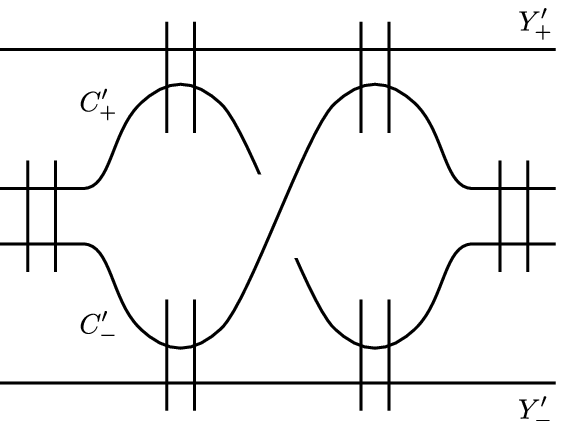}
  \end{center}
  \caption{}
  \label{7-2}
 \end{minipage}
\end{figure}

Blow up $\mathbb{P}^1\times\mathbb{P}^1$ at $12$ intersection points of $Y_{\pm}$ and $C_{\pm}$. 
Let $R$ be the blown up surface. 
We denote by $Y_{\pm}'$ and $C_{\pm}'$ the strict transforms of $Y_{\pm}$ and $C_{\pm}$ respectively. 
The configuration of curves in $R$ is given in Figure~\ref{7-2}. 
Note that $Y_{\pm}'$ and $C_{\pm}'$ are all $(-4)$-curves and the others are all $(-1)$-curves. 
Let $B'=\sum (Y_{\pm}'+C_{\pm}')$. 
The $K3$ surface $X$ is the double cover of $R$ whose branch locus is $B'$. 
Since $X^{\theta}=B'$ consists of $4$ rational curves, we see $(r, l)=(14, 8)$, by Theorem~\ref{NikThm422}. 
Therefore this is the example of No.~[7].

\medskip
\noindent
\textbf{Example No.~[8].}\quad 
Consider the following curves on $\mathbb{P}^1\times\mathbb{P}^1$ (Figure~\ref{8-1}); 
\[Y_{\pm}\colon v=\pm 1,\ E\colon v^2(u^4+a_1u^2+a_2)+2a_3uv(u^2-a_4)+a_5(u^2-a_4)^2=0 \pod{a_i\in\mathbb{C}}. \]
Note that $E$ has $2$ nodes at $(u, v)=(\pm\sqrt{a_4}, 0)$. 

\begin{figure}[htbp]
 \begin{minipage}{0.45\hsize}
  \begin{center}
   \includegraphics[width=40mm]{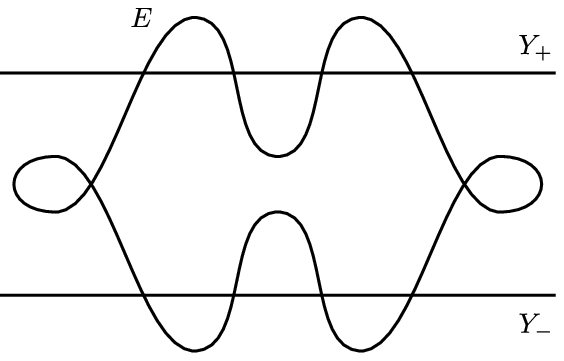}
  \end{center}
  \caption{}
  \label{8-1}
 \end{minipage}
 \begin{minipage}{0.45\hsize}
  \begin{center}
   \includegraphics[width=40mm]{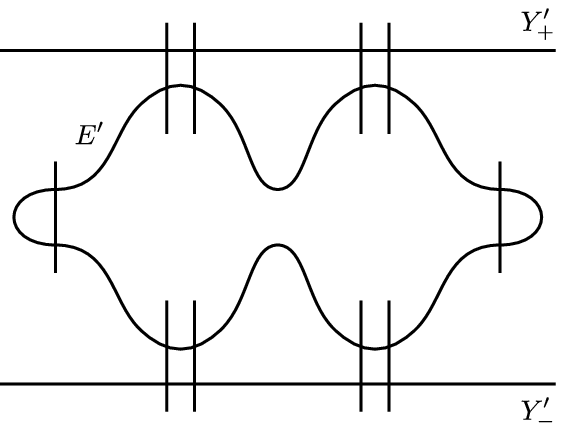}
  \end{center}
  \caption{}
  \label{8-2}
 \end{minipage}
\end{figure}

Blow up $\mathbb{P}^1\times\mathbb{P}^1$ at $8$ intersection points of $Y_{\pm}$ and $E$, and at $2$ nodes of $E$. 
Let $R$ be the blown up surface. 
We denote by $Y_{\pm}'$ and $E'$ the strict transforms of $Y_{\pm}$ and $E$ respectively. 
The configuration of curves in $R$ is given in Figure~\ref{8-2}. 
Note that $Y_{\pm}'$ are $(-4)$-curves and other rational curves are all $(-1)$-curves. 
Let $B'=Y_+'+Y_-'+E'$. 
The $K3$ surface $X$ is the double cover of $R$ whose branch locus is $B'$. 
Since $X^{\theta}=B'$ consists of one elliptic curve and $2$ rational curves, we see $(r, l)=(12, 8)$, by Theorem~\ref{NikThm422}. 
Therefore this is the example of No.~[8].

\medskip
\noindent
\textbf{Example No.~[9].}\quad 
Consider the following curves on $\mathbb{P}^1\times\mathbb{P}^1$ (Figure~\ref{9-1}); 
\[C_{\pm}\colon v^2(u^2\pm a_1u+a_2)\pm 2a_3v(u\mp a_4)^2+a_5(u\mp a_4)^2=0 \pod{a_i\in\mathbb{C}}. \]
Note that $C_+$ and $C_-$ have a node at $(u, v)=(a_4, 0)$ and $(-a_4, 0)$ respectively.  

\begin{figure}[htbp]
 \begin{minipage}{0.45\hsize}
  \begin{center}
   \includegraphics[width=40mm]{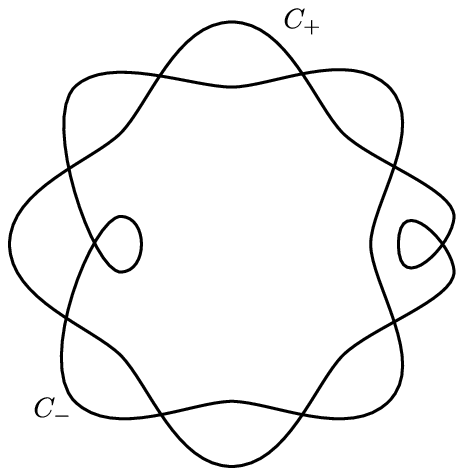}
  \end{center}
  \caption{}
  \label{9-1}
 \end{minipage}
 \begin{minipage}{0.45\hsize}
  \begin{center}
   \includegraphics[width=40mm]{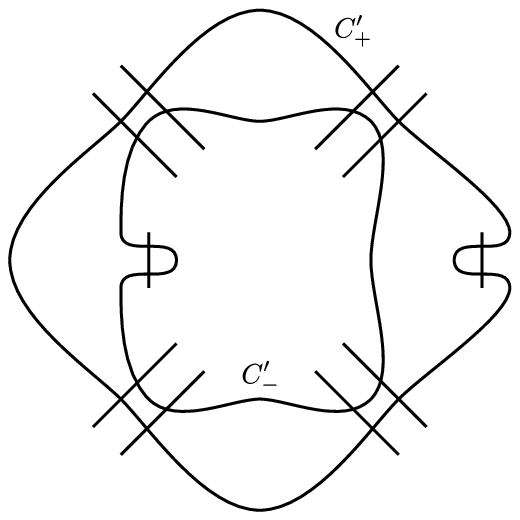}
  \end{center}
  \caption{}
  \label{9-2}
 \end{minipage}
\end{figure}

Blow up $\mathbb{P}^1\times\mathbb{P}^1$ at $8$ intersection points of $C_{\pm}$, and at $2$ nodes of $C_{\pm}$. 
Let $R$ be the blown up surface. 
We denote by $C_{\pm}'$ the strict transforms of $C_{\pm}$ respectively. 
The configuration of curves in $R$ is given in Figure~\ref{9-2}. 
Note that $C_{\pm}'$ are $(-4)$-curves and the others are all $(-1)$-curves. 
Let $B'=C_+'+C_-'$. 
The $K3$ surface $X$ is the double cover of $R$ whose branch locus is $B'$. 
Since $X^{\theta}=B'$ consists of $2$ rational curves, we see $(r, l)=(12, 10)$, by Theorem~\ref{NikThm422}. 
Therefore this is the example of No.~[9].

\medskip
\noindent
\textbf{Example No.~[10].}\quad 
Consider the following curves on $\mathbb{P}^1\times\mathbb{P}^1$ (Figure~\ref{10-1}); 
\[Y_{\pm}\colon v=\pm 1,\ C\colon v^2(u^4+u^2+a_1)+vu(a_2 u^2+a_3)+a_4 u^4+a_5 u^2+a_6=0 \pod{a_i\in\mathbb{C}}. \]

\begin{figure}[htbp]
 \begin{minipage}{0.45\hsize}
  \begin{center}
   \includegraphics[width=40mm]{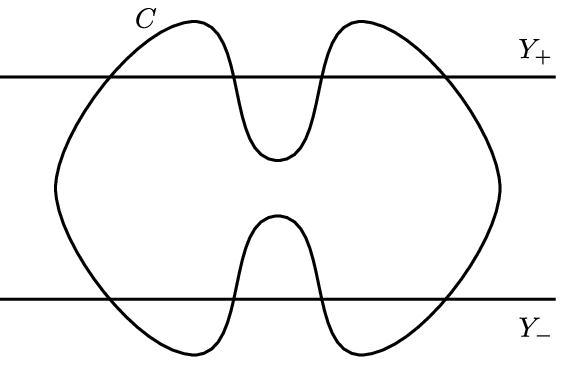}
  \end{center}
  \caption{}
  \label{10-1}
 \end{minipage}
 \begin{minipage}{0.45\hsize}
  \begin{center}
   \includegraphics[width=40mm]{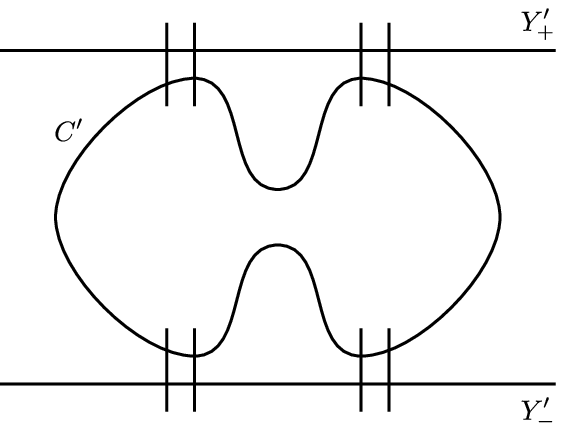}
  \end{center}
  \caption{}
  \label{10-2}
 \end{minipage}
\end{figure}

Blow up $\mathbb{P}^1\times\mathbb{P}^1$ at $8$ intersection points of $Y_{\pm}$ and $C$. 
Let $R$ be the blown up surface. 
We denote by $Y_{\pm}'$ and $C'$ the strict transforms of $Y_{\pm}$ and $C$ respectively. 
The configuration of curves in $R$ is given in Figure~\ref{10-2}. 
Note that $Y_{\pm}'$ are $(-4)$-curves and other rational curves are all $(-1)$-curves. 
Let $B'=Y_+'+Y_-'+C'$. 
The $K3$ surface $X$ is the double cover of $R$ whose branch locus is $B'$. 
Since $X^{\theta}=B'$ consists of a curve of genus $3$ and $2$ rational curves, we see $(r, l)=(10, 6)$, by Theorem~\ref{NikThm422}. 
Therefore this is the example of No.~[10].

\medskip
\noindent
\textbf{Example No.~[11].}\quad 
Consider the following curves on $\mathbb{P}^1\times\mathbb{P}^1$; 
\begin{align*}
&E_1\colon u^2v^2+u^2+a_1v^2+a_2 uv+a_3=0, \\
&E_2\colon u^2v^2+v^2+a_4u^2+a_5 uv+a_6=0 \pod{a_i\in\mathbb{C}}. 
\end{align*}
%
Then $E_i$ are smooth elliptic curves and preserved by $\psi$ (Figure~\ref{11}). 

Blow up $\mathbb{P}^1\times\mathbb{P}^1$ at $8$ intersection points of $E_1$ and $E_2$. 
Let $R$ be the blown up surface. 
We denote by $E_1'$ and $E_2'$ the strict transforms of $E_1$ and $E_2$ respectively. 
Let $B'=E_1'+E_2'$. 
The $K3$ surface $X$ is the double cover of $R$ whose branch locus is $B'$. 
Since $X^{\theta}=B'$ consists of two elliptic curves, we see $(r, l, \delta)=(10, 8, 0)$, by Theorem~\ref{NikThm422}. 
To see to which No. this example belongs, we argue as follows.

The involution $\psi$ of $\mathbb{P}^1\times\mathbb{P}^1$ lifts to the rational elliptic surface $R/\mathbb{P}^1$, which acts on the base trivially.
Hence, by choosing a zero-section, it corresponds to a translation by a $2$-torsion section $\sigma$. 
In this case, the Horikawa construction corresponds exactly to the quadratic twist construction discussed in \cite{Kon,HS}:
the free involution $\varepsilon$ is given by a lift of the translation automorphism by $\sigma$. 
We remark that generically the elliptic surface $R$ has eight singular fibers $4I_2+4I_1$ (Kodaira's notation). 

Here we consider a deformation of the $K3$ surface $X$: 
we move the branch locus $B'$ to $B'_1$, the union of one $I_2$ fiber plus one smooth fiber. We denote by $X_1$ the 
smooth $K3$ surface obtained by the double cover branched along $B'_1$ and the minimal desingularization.
Since only rational double points appear in construction, $X$ and $X_1$ are connected by a smooth deformation.
Now $X_1$ has also an Enriques quotient $Y_1$ via the quadratic twist construction. By definition of $B'_1$, 
the main invariant of $\theta_1$ on $X_1$ is $(12,8,1)$ and the 
associated involution on $Y_1$ has type No.~[8]. 
We recall that a specialization of $K3$ surfaces $X \rightsquigarrow X_1$ exists if and only if $T_{X_1}\subset T_X$. 
Hence we see that our example belongs to No.~[11].

\begin{figure}[htbp]
 \begin{minipage}{0.45\hsize}
  \begin{center}
   \includegraphics[width=40mm]{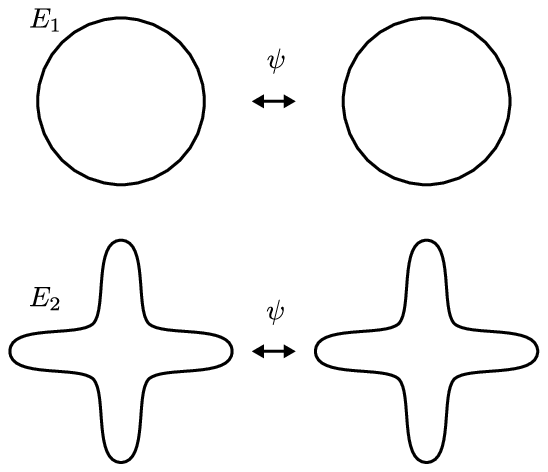}
  \end{center}
  \caption{}
  \label{11}
 \end{minipage}
 \begin{minipage}{0.45\hsize}
  \begin{center}
   \includegraphics[width=40mm]{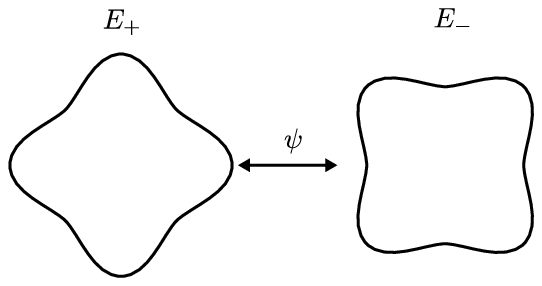}
  \end{center}
  \caption{}
  \label{12}
 \end{minipage}
\end{figure}

\medskip
\noindent
\textbf{Example No.~[12].}\quad 
Consider the following curves on $\mathbb{P}^1\times\mathbb{P}^1$; 
\[E_{\pm}\colon v^2(u^2\pm a_1 u+a_2)\pm v(u^2\pm a_3 u+a_4)+(u^2\pm a_5 u+a_6)=0 \pod{a_i\in\mathbb{C}}. \]
Then $E_{\pm}$ are elliptic curves which are exchanged by $\psi$ (Figure~\ref{12}). 

Blow up $\mathbb{P}^1\times\mathbb{P}^1$ at $8$ intersection points of $E_{\pm}$. 
Let $R$ be the blown up surface. 
We denote by $E_{\pm}'$ the strict transforms of $E_{\pm}$ respectively. 
Let $B'=E_+'+E_-'$. 
The $K3$ surface $X$ is the double cover of $R$ whose branch locus is $B'$. 
Since $X^{\theta}=B'$ consists of two elliptic curves, we see $(r, l, \delta)=(10, 8, 0)$, by Theorem~\ref{NikThm422}. 
To check that they correspond to No.~[12] in this case, we discuss as follows.

We remark that the case No.~[9] is a specialization of our family: it is exactly the case where $E_{\pm}$ acquire nodes.
By simultaneous resolution, we can regard the $K3$ surface $X_0$ in No.~[9] as a special member of a 
smooth deformation with general fiber $X_1$ from our family. Here,
the two elliptic curves $E_{\pm}$ deform into the sums of two rational curves $F_{\pm}+F_{\pm}'$, where 
$(F_{\pm}^2)=((F_{\pm}')^2)=-2$ and $(F_{\pm},F_{\pm}')=2$ (double sign corresponds). 

Moreover, since the formation of $\varepsilon$ does not change under this specialization, our family is in fact 
a family of $K3$ surfaces with free involutions $(X_1,\varepsilon_1)$ and $(X_0,\varepsilon_0)$. (In other words,
the free involutions are preserved under the specialization.)
By the theory of period maps, we have an inclusion $NS(X_1)\subset NS(X_0)$. The orthogonal complement 
is generated by the $(-4)$-vector $F_+-F_-$, and the overlattice structure is given by
\[F_+=\frac{F_++F_-}{2}+\frac{F_+-F_-}{2}\in NS(X_0).\]
Hence, we can compute $\det NS(X_0)=\det NS(X_1)\cdot 4/2^2=\det NS(X_1)$. Recalling that 
$\det NS$ is the same as $\det K_-$ in each case, we can see that our example belongs to No.~[12].

\medskip
\noindent
\textbf{Example No.~[14].}\quad 
We need an irreducible curve on $\mathbb{P}^1\times\mathbb{P}^1$ which has $8$ nodes and stable under $\psi$, but
it seems not easy to construct them in a direct way.
The following construction is due to H.~Tokunaga. 

Let $B_0$ be a smooth irreducible divisor of bidegree $(2,2)$ to which the four lines $u=0,\infty$; $v=0,\infty$ 
are tangent. We remark that in general, if a divisor is tangent to the branch curve (with local intersection number $2$), then 
by pulling back to the double cover, the divisor acquires a node at the point of tangency. Thus in our case the following construction works:
We consider the two self-morphisms $\psi_1\colon (u,v)\mapsto (u^2,v)$ and $\psi_2\colon (u,v)\mapsto (u,v^2)$ 
of $\mathbb{P}^1\times\mathbb{P}^1$. Then, the pullback $C_8:=(\psi_1\circ \psi_2)^* (B_0)$ has bidegree $(4,4)$ with 
eight nodes and is stable under $\psi$ (Figure~\ref{14}).

\begin{figure}[htbp]
  \begin{center}
   \includegraphics[height=40mm]{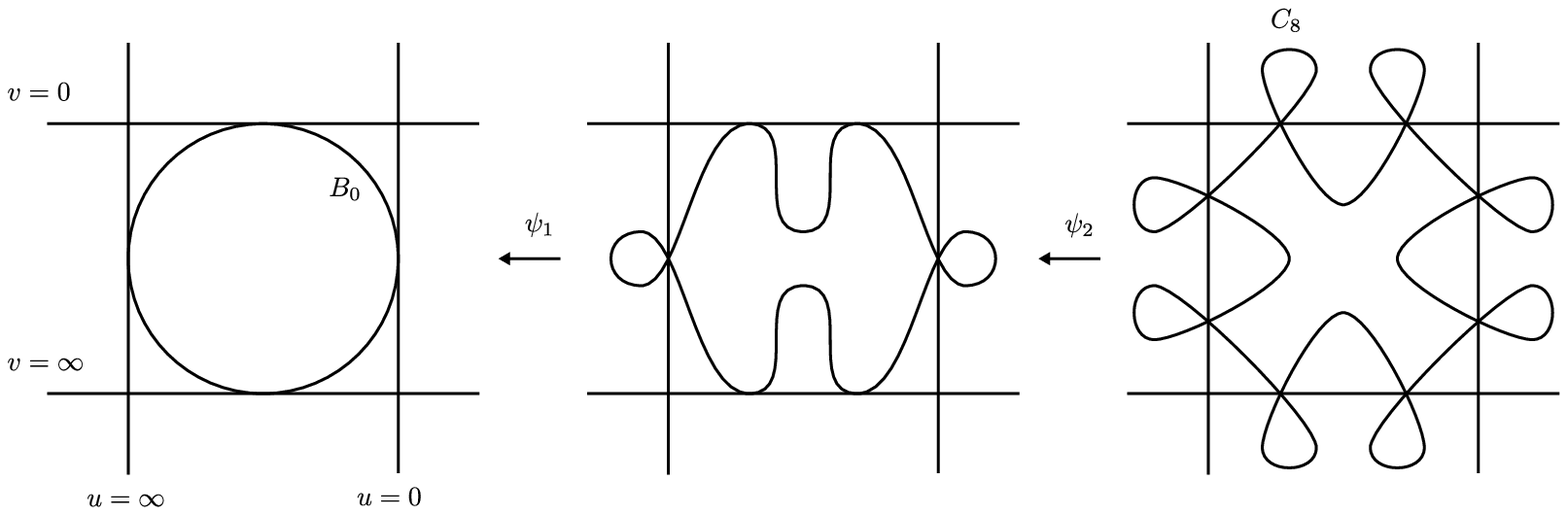}
  \end{center}
  \caption{}
  \label{14}
\end{figure}

We can exhibit the equation for $C_8$ as follows, for example.
\[(c^2u^4+2cbu^2+b^2)v^4+(2cau^4+du^2+2b)v^2+(a^2u^4+2au^2+1)=0. \]

Blow up $\mathbb{P}^1\times\mathbb{P}^1$ at $8$ nodes of $C_8$. 
Let $R$ be the blown up surface. 
We denote by $C_8'$ the strict transforms of $C_8$. 
The $K3$ surface $X$ is the double cover of $R$ whose branch locus is $C_8'$. 
Since $X^{\theta}=C_8'$ is a elliptic curve, we see $(r, l, \delta)=(10, 10, 1)$, by Theorem~\ref{NikThm422}. 
Therefore this is the example of No.~[14]. 

\medskip
\noindent
\textbf{Example No.s~[15]--[18].}\quad 
Let $C_{2i} \pod{i=0, 1, 2, 3}$ be irreducible curves on $\mathbb{P}^1\times\mathbb{P}^1$ whose bidegree is $(4, 4)$ with $2i$ nodes respectively. 

Blow up $\mathbb{P}^1\times\mathbb{P}^1$ at $2i$ nodes of $C_{2i}$. 
Let $R_{2i}$ be the blown up surface. 
We denote by $C_{2i}'$ the strict transforms of $C_{2i}$. 
The $K3$ surface $X_{2i}$ is the double cover of $R_{2i}$ whose branch locus is $C_{2i}'$. 
Since $X_{2i}^{\theta}=C_{2i}'$ is a curve of genus $9-2i$, we see $(r, l)=(2i+2, 2i+2)$, by Theorem~\ref{NikThm422}. 
Therefore the cases $i=3$, $2$, $1$ and $0$ are the examples of No.~[15], [16], [17] and [18] respectively. 

\subsection{Enriques' sextics}\label{sextic}

The non-normal sextic surface in $\mathbb{P}^3$ which is singular along the six edges of a tetrahedron is 
a model of Enriques surface, the one first considered by Enriques himself. 
In fact its normalization gives a smooth Enriques surface, see \cite{GH}. 
Setting the tetrahedron as $xyzt=0$, the general equation of such surfaces is given by 
\[q(x,y,z,t)xyzt+(x^2y^2z^2+x^2y^2t^2+x^2z^2t^2+y^2z^2t^2)=0,\]
where $q$ is a quadratic equation. By considering various linear actions on $\mathbb{P}^3$, 
we can get many examples of involutions on Enriques surfaces. The most important for us among them
is the following example exhibiting No.~[13]. 

\medskip
\noindent
\textbf{Example No.~[13].}\quad 
Let us consider the involution $\iota \colon (x:y:z:t)\mapsto (y:x:t:z)$ on $\mathbb{P}^3$. 
The general equation of invariant Enriques' sextic $\overline{Y}$ looks as 
\begin{equation*}
\begin{split}
\left( a_1(x^2+y^2)+  a_2(z^2+t^2)+  a_3 xy+  a_4 zt+   a_5(xz+yt)+   a_6(xt+yz)\right) xyzt\\
+ (x^2y^2z^2+x^2y^2t^2+x^2z^2t^2+y^2z^2t^2)=0,
\end{split}
\end{equation*}
where $a_i\in \mathbb{C}$ are general. Then the normalization $Y$ is a smooth Enriques surface 
with the induced action by $\iota$. 

Let us show that they belong to No.~[13]. 
Since in this case $\theta$ is also fixed-point-free, 
this is equivalent to saying that the fixed locus $Y^{\iota}$ is a finite set. Moreover
since the normalization $Y\rightarrow \overline{Y}$ is a finite morphism, it suffices to show that $\overline{Y}^{\iota}$ is 
a finite set. But this set is the intersection of $\overline{Y}$ with the fixed locus in $\mathbb{P}^3$, 
$\{x=y,z=t\}\cup \{x+y=0,z+t=0\}$. Since the general element does not contain these lines, 
the intersection is a finite set as desired.

\end{document}